\def\subneq{\mathop{\raise 0.7ex \hbox{$\subset$}}\!\!\!\!\!\!{\raise -0.6ex\hbox{$\neq$}}\,}
\def\u{\underline}
\def\QQ{{\ \rlap {\raise 0.4ex \hbox{$\scriptscriptstyle |$}}\hskip -0.2em Q}}
\def\1{{1\hskip-0.25em{\rm l}}}
\def\CC{{\ \rlap{\raise 0.4ex \hbox{$\scriptscriptstyle |$}}\hskip -0.2em C}}
\def\sobre#1#2{\lower 1ex \hbox{ $#1 \atop #2 $ } }
\def\bajo#1#2{\raise 1ex \hbox{ $#1 \atop #2 $ } }
\def\ep{\varepsilon}
\def\p{\partial}
\def\O{\Omega}
\def\o{\omega}
\def\u2{{u^\ep \over \ep^2 }}
\def\u3{{\displaystyle {\bar u}^\ep \over \ep^2 }}
\newcommand{\sN}{{\scriptstyle N}}
\newcommand{\be}{ \begin{equation} }
\newcommand{\ee}{ \end{equation} }
\newcommand{\beay}{ \begin{eqnarray} }
\newcommand{\eeay}{ \end{eqnarray}}
\newcommand{\beayn}{ \begin{eqnarray*} }
\newcommand{\eeayn}{  \end{eqnarray*} }
\begin{document}

\newtheorem{theorem}{Theorem}
\newtheorem{lemma}{Lemma}
\newtheorem{proposition}{Proposition}
\newtheorem{corollary}{Corollary}
\newtheorem{remark}{Remark}
\newtheorem{assumptions}{Assumptions}
\title{A Rigorous Derivation of the Equations for the Clamped Biot-Kirchhoff-Love Poroelastic plate}
\author{{\bf Anna Marciniak-Czochra}\thanks{AM-C  was supported by ERC Starting Grant "Biostruct" 210680 and Emmy Noether Programme of German Research Council (DFG).}  \\Institute of Applied Mathematics, IWR and BIOQUANT\\
University of Heidelberg\\
Im Neuenheimer Feld 267,
69120 Heidelberg , Germany \and  {\bf Andro Mikeli\'c}  \thanks{E-mail:
{\tt Andro.Mikelic@univ-lyon1.fr}. The research of A.M. was partially supported by the  Programme Inter Carnot Fraunhofer from BMBF (Grant 01SF0804) and ANR.}
\\ Universit\'e de Lyon, CNRS UMR 5208,\\
  Universit\'e
Lyon 1, Institut Camille Jordan, \\   43, blvd. du 11 novembre 1918,
 69622 Villeurbanne Cedex, France}
\date{\today}

\maketitle

\begin{abstract} In this paper we investigate the limit behavior of the solution to quasi-static Biot's equations in thin poroelastic plates as the thickness tends to zero. We choose Terzaghi's time corresponding to the plate thickness and obtain the strong convergence of the three-dimensional solid displacement, fluid pressure and total poroelastic stress to the solution of the new class of plate equations. In the new equations the in-plane stretching { is} described by the 2D Navier's linear elasticity equations, with elastic moduli depending on  Gassmann's and Biot's coefficients.
The bending equation is coupled with the pressure equation and it contains the bending moment due to the variation in pore pressure across the plate thickness. The pressure equation {  is parabolic only in the vertical direction. As additional terms it } contains the time derivative of the in-plane Laplacean of the vertical deflection of the plate and {  of the the elastic in-plane compression term}.
\bigskip

{\bf Keywords} Thin poroelastic plate, Biot's quasi-static equations,  bending-flow coupling, higher order degenerate elliptic-parabolic systems, asymptotic methods

{\bf AMS classcode}
35B25; 74F10; 74K20; 74Q15; 76S

\end{abstract}

\section{Introduction}

A plate is a 3D body bounded by two surfaces of small curvature and placed at small distance. A plate is said to be thin if the distance between these surfaces, called the thickness, is much smaller than a characteristic size of the surrounding surfaces.

The construction of a linear theory for the extensional and flexural deformation of plates, starting from the  3D Navier equations of linear elasticity, goes back to the 19th century and Kirchhoff's work. Following a short period of controversy, the complete theory, nowadays known as the Kirchhoff-Love equations for bending of thin elastic plates, was derived. Derivation was undertaken under assumptions, which are referred to as Kirchhoff's hypothesis. It reads as follows:
\vskip1pt
\fbox{ Kirchhoff's hypothesis:} Every straight line in the plate that was originally perpendicular to the plate midsurface, remains straight after the strain and perpendicular to the deflected midsurface.
\vskip1pt
Since then a theory providing appropriate 2D equations applicable to shell-like bodies was developed.
Later, due to the considerable difficulties with the derivation of plate equations from 3D linear elasticity equations, a direct approach in the sense of Truesdell's school of continuum mechanics, using a Cosserat surface, was proposed. We refer to the review paper of Naghdi  \cite{Naghdi} .
In classical engineering textbooks one finds a formal derivation, based on the Kirchhoff hypothesis. For details we refer to  Fung's textbook \cite{fung}.

A  different approach is to consider the plate equations as an approximation to the 3D elasticity equations in a plate domaine $\Omega^\ep = \o \times (-\ep , \ep )$, where $\o$ is the middle surface and $\ep$ is the ration between the plate thickness and its longitudinal dimension. Here, for simplicity we suppose that  plate is flat and of uniform thickness.

The comparison between 3D model and 2D equations was first performed formally in  \cite{FrDr:61}. Then Ciarlet and collaborators developed systematically the approach where the vertical variable $x_3 \in (-\ep , \ep )$ was scaled by setting $y_3= x_3 / \ep $. This change of variables transforms the PDE to a singular perturbation problem on a fixed domain. With such approach Ciarlet and Destuynder have established the rigorous error estimate between the 3D solution and the Kirchhoff-Love solution to the 2D plate equations, in the limit as $\ep \to 0.$ For details, we refer to the article \cite{CD:79}, to the book \cite{Ciarlet90} and to the subsequent work for details. The complete asymptotic expansion is due to Dauge and Gruais (see \cite{DG1}, \cite{DG2} and subsequent work by Dauge and collaborators). Handling general boundary conditions required the boundary layer analysis. For a review, discussing also the results by Russian school, we refer to \cite{DFY04}.
Further generalizations to nonlinear plates and shells exist and were obtained using $\Gamma -$ convergence. We do not discuss it here and don't undertake to give  complete references  to the plate theory.

Many living tissues are fluid-saturated thin bodies like bones,  bladders, arteries and diaphragms and they are  interpreted as poroelastic  plates or shells. For a review of modeling of bones as poroelastic plates we refer to \cite{Cow99}. Furthermore,  industrial filters are an example of poroelastic  plates and shells.

Our goal is to extend the above mentioned theory to the {\bf poroelastic} plates. These are the plates consisting of a porous material saturated by a viscous fluid.  The mathematical model of such a plate consists of the Biot poroelastic equations, instead of Navier's elasticity equations. In addition to the phase displacements, description of a poroelastic medium requires the pressure field and, consequently, an additional PDE.

Until recently, few articles have addressed plate theory for poroelastic media. Nevertheless, in the early paper \cite{Biot64}, Biot examined the buckling of a fluid-saturated porous slab under axial compression. This can be considered as the first study of a poroelastic plate. The goal was to have a model for the buckling of porous media, which is simpler than general poroelasticity equations. The model was obtained in the context of the thermodynamics of irreversible processes.

More recently, in \cite{TheBe94}
Theodorakopoulos and Beskos  used Biot's poroelastic theory and the Kirchhoff theory assuming thin plates
and neglected any in-plane motion to obtain purely bending vibrations. A systematic approach to the linear poroelastic plate and shell theory was undertaken by Taber in \cite{Tab92a} and \cite{TabPul96}, using 
 Kirchhoff's approach (see e. g. \cite{Szi74}).


In this paper we follow the approach of Ciarlet and Destuynder, as presented in the textbook \cite{SHSP92} and rigorously develop equations for a poroelastic plate.

\section{Setting of the problem}

We study the deformation and the flow in a poroelastic plate  $\Omega^\ell  = \{ (x_{1}, x_{2}, x_{3}) \in \omega_L \times (-\ell /2 , \ell /2 ) \}$, where  the mid-surface $ \omega_L$ is a bounded domain in $\mathbb{R}^2$ with a smooth
boundary $\partial \omega_L \in C^1$. For simplicity, we suppose that the poroelastic plate $\Omega^\ell$ is an isotropic material.
$\Sigma^{\ell}$ (respectively $\Sigma^{-\ell}$ ) is the upper face (respectively lower face) of the plate $\Omega^\ell$. $\Gamma^\ell$ is the lateral boundary, $\Gamma^\ell = \partial \omega_L \times (-\ell /2 , \ell /2 )$.
We recall that the ratio between the plate thickness and the characteristic horizontal length is $\ep= \ell / (2L) <<1$.

A poroelastic plate consists of an elastic skeleton (the solid phase) and pores saturated by a viscous fluid (the fluid phase). At the pore scale, one deals with a complicated fluid-structure problem and in applications we model it using the effective medium approach. In poroelasticity the effective modeling goes back to the fundamental work by Biot (see \cite{Biot55}, \cite{Biot63} and \cite{TOL}). The deformable porous media, saturated by a fluid, are modeled using Biot's diphasic equations for the effective solid displacement and the effective pressure. Biot's equations are valid at every point of the plate and the averaged phases coexist at every point. The two-scale poroelasticity equations were obtained using the two-scale expansions applied to the pore fluid-structure  equations by Burridge, Keller, Sanchez-Palencia, Auriault and many others.  We refer to the book \cite{SP-80}, the review \cite{Au-97} and the references therein. Mathematically rigorous justification of the two-scale equations is due to Nguetseng \cite{NG2} and to the papers by Mikeli\'c et al \cite{GM:00}, \cite{ClFGM} and \cite{FM:03}, where  it was also shown that the two-scale equations  are equivalent to the Biot system.
\vskip3pt

\begin{table}[ht]
{\footnotesize
\centerline{\begin{tabular}{|l|l|} \hline\hline
\emph{SYMBOL} & \emph{ QUANTITY }
  \\
\hline   $G$ & shear modulus     \\
\hline    $\nu$ & drained Poisson ratio   \\
\hline    $\gamma_G $  & inverse of Biot's modulus     \\
\hline    $\alpha$   & effective stress coefficient    \\
\hline    $k$  & permeability    \\
\hline    $\eta$  & viscosity    \\
\hline  $L$ and $\ell$  & midsurface  length and plate  width, respectively      \\
\hline    $\varepsilon = \ell  / (2L)$  & small parameter      \\
\hline     $T= \eta L^2 / (k G)$   &  characteristic Terzaghi's time    \\
\hline    $d$   & characteristic displacement      \\
\hline    $P=d G/L$  & characteristic  fluid pressure  \\
 \hline   $\mathbf{u} = (u_1 , u_2 , u_3 )$ & solid phase displacement \\
 \hline   $p$ &  pressure \\
 \hline
\end{tabular}}
} \caption{{\it Parameter and unknowns  description}\label{Data}}
\end{table}

We note that Biot's diphasic equations describe behavior of the system at so called Terzaghi's time scale $T=\eta L^2_c / (k G),$ where $L_c$ is the characteristic domain size, $\eta$ is dynamic viscosity, $k$ is permeability and $G$ is the bulk modulus. For the list of all parameters see the Table \ref{Data}.

In general, for the plate we have two possible choices of time scale:
\begin{enumerate}
  \item $L_c = L $, leading to $T= \eta L^2 / (k G)$ and
  \item $L_c =\ell $, leading to the  Taber-Terzaghi  transversal time $T_{tab} = \eta \ell^2 / (4k G)$.
\end{enumerate}
If Terzaghi's time is short, then it is necessary to study the vibrations of the poroelastic plate. For filters, Terzaghi's time scale doesn't correspond to short times, the rescaled acceleration factors $\max \{ \rho_f , \rho_s \} k^2 G/ (\eta^2 L_c) $ are small and the acceleration is negligible. Only the time change of the variation of fluid volume per unit reference volume $\zeta = \gamma_G p + \alpha \mbox{ div } {\bf u} $ is not small. For more discussion of the scaling we refer to \cite{MW:10}.

Consequently,   we  study the simplest model of real applied importance: the quasi-static Biot system.

Following Biot's classical work \cite{Biot55}, the governing equations as written in \cite{Cow99} take the following form:
\begin{gather}\sigma = 2G e({\bf u} ) + ( \frac{2\nu G}{1-2\nu} \mbox{ div }{\bf u}- \alpha p ) I  \; \mbox{ in } \; \Omega^\ell , \label{Coweq1} \\
-G \bigtriangleup {\bf u} - \frac{G}{1-2\nu} \bigtriangledown \mbox{ div }{\bf u} +\alpha \bigtriangledown p =0 \; \mbox{ in } \; \Omega^\ell ,  \label{Coweq1a} \\
\frac{\partial }{ \partial t} (\gamma_G p + \alpha \mbox{ div } {\bf u} ) - \frac{ k}{\eta} \triangle p =0 \; \mbox{ in } \; \Omega^\ell .
\label{Coweq2}
\end{gather}
We impose a given contact force $\sigma \mathbf{n} = \mathcal{P}^{\pm \ell} $  and a given normal flux $ \displaystyle -\frac{k}{\eta}\frac{\p p}{\p x_3} =U^\ell $ at $x_3 = \pm \ell /2$.

 We recall that the effective displacement of the solid phase is denoted by $\mathbf{u} = (u_1 , u_2 , u_3 ),$ the strain tensor $e$ is given by $e(\mathbf{u})=sym\bigtriangledown \mathbf{u}$, $\sigma$ is the stress tensor and the effective pore pressure is $p$.
\vskip5pt
Following  engineering textbooks approach to Kirchhoff-Love's plate model and with some appropriate modifications, we that are  able to derive {\bf formally} the poroelastic plate equations.

{  For this,
in addition to  the Kirchhoff hypothesis and following Taber's papers \cite{Tab92a} and \cite{TabPul96}, we suppose that
\begin{gather}
    \mbox{ the fluid velocity derivatives in the longitudinal direction} \notag \\
    \mbox{ are small compared to the transverse one.}\label{pressureapp}
\end{gather}}
Following  Fung's textbook \cite{fung}, we perform classical Kirchhoff type formal calculations   and obtain the following equations
\begin{gather}
     G\ell  \Delta_{x_1 , x_2 } \mathbf{u}^\omega + \frac{G\ell (1+\nu)}{1-\nu} \nabla_{x_1 , x_2 } \mbox{div}_{x_1 , x_2 } \mathbf{u}^\omega + \frac{\alpha (1-2\nu)}{1-\nu} \nabla_{x_1 , x_2 } N  \notag \\
     + \sum_{j=1}^2 (\mathcal{P}^\ell_j +\mathcal{P}^{-\ell}_j) \mathbf{e}^j =0      , \label{Strech1} \\
   {  (\gamma_G + \frac{\alpha^2  (1-2\nu)}{2G (1-\nu)} ) N  = \frac{ \alpha (1-2\nu ) \ell}{1-\nu} \mbox{ div}_{x_1 , x_2} (u^\o_1 , u^\o_2 ),} \label{Strech2A}\\
 { \hskip-12pt  (\gamma_G + \frac{\alpha^2  (1-2\nu)}{2G (1-\nu)} ) \frac{\partial }{ \partial t} \big( p^{eff} +\frac{N}{\ell}   \big)   -  \frac{k}{\eta}  \frac{\p^2 }{\p {  x_3^2 }} \big( p^{eff} +\frac{N}{\ell} \big) =\alpha {  x_3  \frac{1-2\nu}{1-\nu}} \frac{\p }{\p t} \Delta_{x_1 , x_2 } w ,}\label{Stretch3}\\
    {  \frac{G \ell^3}{6 (1-\nu)} \Delta_{x_1 , x_2}^2 w + \alpha \frac{1-2\nu}{1-\nu}  \Delta_{x_1 , x_2} \int^{\ell /2}_{-\ell/2} x_3 p^{eff} \ dx_3 =} \notag \\
    \frac{\ell}{2} \sum_{i=1}^2 \frac{\p }{\p x_i} (\mathcal{P}^\ell_i +  \mathcal{P}^{-\ell}_i ) +  \mathcal{P}^\ell_3 +  \mathcal{P}^{-\ell}_3 ,\label{bendingplate}
\end{gather}
{  where $w(x_1 , x_2, t)$ is the effective transverse displacement of the surface, $\mathbf{\tilde u}^\o = (u^\omega_1 , u^\omega_2 )$,  $ \displaystyle u^\omega_j (x_1 , x_2 ,t)  - x_3 \frac{\p w}{\p x_j} $ , $j=1,2$, are the effective in-plane solid displacements,  $p^{eff}$ is the effective fluid pressure and $N=\displaystyle -\int^{\ell /2}_{-\ell/2} p^{eff} \ {  d x_3 }$ is the effective stress resultant due to the variation in pore pressure across the plate thickness.}

We note that $ D=\displaystyle  \frac{G \ell^3}{6 (1-\nu)}$ is the flexural rigidity of the plate solid skeleton.
\vskip3pt
 We refer to Appendix for the detailed formal calculation.

The disadvantage of such approach to derive the equations is in using some {\it ad hoc} hypothesis.
The  assumptions (\ref{HypoKirchh}) that the stresses $\sigma_{i3}$, $i=1,2,3$
are negligible, can only be satisfied approximatively.  They are not even consistent with the fact that we use then averages which are non zero. To this classical difficulty of the plate theory, we add a new {\it ad hoc} assumption (\ref{pressureapp}) on the pressure field.\vskip2pt
As in the case of the justification of the elastic plate equations by Ciarlet et al, here also the two-scale asymptotic expansion approach gives the correct answer.
Note that the presence of the effective stress resultant $p^\o$, due to the variation in pore pressure across the plate thickness, causes stretching even if the boundary conditions for $(u^\o_1 , u^\o_2 )$ are homogeneous.


 Our goal is to extend the Kirchhoff-Love plate justification by Ciarlet et al and by Dauge et al to the poroelastic case.  Due to its structure, the quasi-static Biot system cannot be written  as a minimization problem. Therefore, we cannot apply  the approach  of  $\Gamma$-convergence to obtain the effective equations. Instead, we  start with a weak convergence result,  and then, after adding additional correction terms show the strong convergence. The obtained result is then of the same type as in
  $\Gamma$-convergence approach    i.e. they would coincide in the  case of linear elasticity. In poroelastic case, we have to deal with more involved norms related to the energy of the Biot system.


{  In subsection  \ref{subDE} we present the dimensionless form of the problem. Subsection \ref{secEU} contains a sketch of the proof of existence a unique smooth solution for the starting problem. In subsection \ref{subConv} we formulate our convergence results. Section \ref{scal} is consecrated to the introduction of the rescaled problem, posed on the domain $\Omega =\o \times (-1,1)$. Then in Section  \ref{convg6} we study convergence of the solutions to the rescaled problem, as $\ep \to 0$.  In short Sections \ref{strongcoup} and \ref{strongstressco} we prove the strong convergence for the corrected displacement and pressures and stresses, respectively. In Appendix \ref{append} we give an ad hoc derivation of the model, which follows mechanical engineering textbooks and which is justified a posteriori by our rigorous results.}

\section{Main Results}

\subsection{Dimensionless equations}\label{subDE}

We introduce the dimensionless unknowns and variable by setting
\begin{gather*}
    \gamma = \gamma_G G ; \quad  P = \frac{GU}{L} ; \quad \lambda = \frac{2 \nu}{1-2\nu} ;
     \quad {  T= \frac{\eta \ell^2 }{4k G} ;} \\
  U {\bf u}^\ep = {\bf u} ; \quad P p^\ep = p ; \quad {\tilde x} L =x; \quad {\tilde t} T= t .
\end{gather*}
After dropping wiggles the system (\ref{Coweq1})-(\ref{Coweq2}) becomes
\begin{eqnarray}
- \bigtriangleup {\bf u}^\varepsilon-\frac{1}{1-2\nu}\bigtriangledown \mbox{ div }{\bf u}^\varepsilon +\alpha \bigtriangledown p^\varepsilon =0 \quad \mbox{ in } \; \Omega^ \varepsilon \times (0,T), \label{Bioteq1}
\\
\sigma^\ep = 2 e({\bf u}^\ep ) + ( \frac{2\nu }{1-2\nu} \mbox{ div }{\bf u}^\ep- \alpha p^\ep ) I  \; \mbox{ in } \; \Omega^\ep \times (0,T), \label{Sig} \\
\frac{\partial }{ \partial t} (\gamma p^\varepsilon + \alpha \mbox{ div } {\bf u}^\varepsilon ) -{  \ep^2 } \triangle p^\varepsilon =0 \quad \mbox{ in } \; \Omega^ \varepsilon \times (0,T), \label{Bioteq2}
\end{eqnarray}
where  ${\bf u}^\varepsilon=({u}_1^\varepsilon,{u}_2^\varepsilon,{u}_3^\varepsilon)$ denotes the dimensionless displacement field and $p^\varepsilon $ the dimensionless pressure. We study a plate $\Omega^\ep$ with thickness $2\varepsilon =\ell /L$ and section $\omega =\o_L /L$.
It is described by
\begin{equation*}
\Omega^\varepsilon = \{ (x_{1}, x_{2}, x_{3}) \in \omega \times (-\varepsilon , \varepsilon) \},
\end{equation*}
  $\Sigma^{+\varepsilon}$ (respectively $\Sigma^{-\varepsilon}$ ) is the upper face (respectively the lower face) of the plate $\Omega^ \varepsilon$. $\Gamma^ \varepsilon$ is the lateral boundary, $\Gamma^ \varepsilon = \partial \omega \times (-\varepsilon , \varepsilon)$.

We suppose that a given dimensionless traction force is applied on $\Sigma^{+\varepsilon} \cup \Sigma^{-\varepsilon}$
and impose  the frictional boundary conditions on $\Gamma^ \varepsilon$:
\begin{gather}
\sigma^\varepsilon {\bf n}^\ep = (2 e({\bf u}^\varepsilon) -\alpha  p^\varepsilon I + \frac{2\nu }{1-2\nu}(\mbox{ div }{\bf u}^\varepsilon) I)(\pm {\bf e}^{3})=\notag \\
 \mathcal{P}_\ep = \ep^2  (\mathcal{P}_1 , \mathcal{P}_2 , 0) + \ep^3 \mathcal{P}_3 \mathbf{e}^3   \;  \mbox{ on }\; \Sigma^{+\varepsilon} \cup \Sigma^{-\varepsilon}, \label{FricBC}
\\
{\bf u}^\varepsilon =0,  \quad \mbox{ on }\: \Gamma^\varepsilon. \label{FricBC1}
\end{gather}

For the pressure $p^\ep$,
at the lateral boundary $\Gamma^\varepsilon = \partial \omega \ \times (-\varepsilon,\varepsilon)$ we
impose a given inflow/outflow flux $V$:
\begin{equation}\label{Bclateral}
-  \bigtriangledown p^\varepsilon \cdot {\bf n} = V(x_1,x_2,t) \: ; \quad \quad \mathbf {n} =
(n_1,n_2,0)
\end{equation}
and at $\Sigma^{+ \varepsilon } \cup \Sigma^{-\ep}$,  we set
\begin{gather}
- \frac{ \p p^\varepsilon}{\p x_3} = U^{1} (x_1,x_2,t) \quad \mbox{on } \; {\Sigma} ^{\pm \ep} . \label{Bctop}
\end{gather}
Finally, we need an initial condition for $p^\ep$ at $t=0$,
\begin{equation}\label{Bcbottom}
    p^\ep (x_1 , x_2 , x_3 , 0 ) = p_{in} (x_1 , x_2 ) \quad \mbox{ in } \; \Omega^ \varepsilon.
\end{equation}

\def\sN2{\scriptscriptstyle i,j+1}
\def\seJ{{\scriptscriptstyle {i+\frac{1}{2},j}}}

\noindent We write problem \eqref{Bioteq1}-\eqref{Bcbottom} in the variational form:\\ \\
Let $V^{\ep}=\left\{{\bf z}\in H^1(\Omega^{\ep})^3|  \quad \mathbf{z} |_{\Gamma^{\ep}}=0 \right\}.$
Find ${\bf u}^{\ep}  \in H^1(0,T,V^{\ep})$, $p^\ep \in H^1(0,T; H^1 (\Omega^{\ep}))$ such that it holds
\begin{eqnarray}
&&\int_{\Omega^\ep} 2  e({\bf u ^\ep }): e({\bf \varphi}) dx + \frac{2\nu }{1-2\nu} \int_{\Omega^\ep} \mbox{ div } {\bf u ^\ep } \mbox{ div }{\bf \varphi}dx -\alpha\int_{\Omega^\ep}
p ^\ep  \mbox{ div }{\bf \varphi}dx\nonumber\\&=&\int_{\Sigma^{+\ep}\cup\Sigma^{-\ep}} {\bf \mathcal{P}_{\ep}}{\bf \varphi}ds, \quad \mbox{ for every }{\bf \varphi}\in V^\ep, \mbox{ and } t\in (0,T),\label{variational1}
\end{eqnarray}
\begin{eqnarray}
&&\gamma \int_{\Omega^\ep} \partial_t p^\ep \zeta dx + \int_{\Omega^\ep} \alpha \mbox{ div } {\partial_t {\bf u}^\ep }\zeta dx +{  \ep^2 } \int_{\Omega^\ep}
\nabla p ^\ep  \nabla \zeta dx\nonumber\\&=&-{  \ep^2} \int_{\partial\Omega^{\ep}} {\bf \mathcal{N}_{\ep}}\zeta ds, \quad \mbox{ for every }{\zeta}\in H^1(\Omega^\ep),\label{variational2}
\end{eqnarray}
where
\begin{equation*}
\mathcal{N}^\ep=\left\{\begin{array}{ll}
V, &\mbox{ on } \Gamma^\ep,\\
U^{1}, &\mbox{ on } \Sigma^{\pm \ep}.
\end{array} \right.
\end{equation*}
\begin{eqnarray}
&&p^\ep |_{t=0} = p_{in}, \quad \mbox{ in } \Omega^\ep.\label{variational3}
\end{eqnarray}

\subsection{Existence and uniqueness for the $\ep$-problem}\label{secEU}

In this section we prove existence and uniqueness of a solution $\{{\bf u}^\ep,p^\ep\} \in$ \break $ H^{1}(0,T; V^\ep )\times H^{1}(0,T;H^1( \Omega^\ep ))$,  $p^\ep |_{t=0} = p_{in}$ of the problem \eqref{variational1} -\eqref{variational3}. We construct  a Galerkin approximation $\bf{u}^\ep \approx \bf{v}_N$, $p^\ep \approx p_N$ and show its convergence to $\{{\bf u}^\ep,p^\ep\} $ when $N\rightarrow +\infty$.

{

\begin{assumptions}\label{Hypoth}
$p_{in} \in H^2 (\Omega^\ep)$,
$\mathcal{P_{\ep}}\in H^{1}(0,T;H^1(\partial \Omega^\ep  \backslash \Gamma^\ep))$, $\mathcal{N^{\ep}}\in H^{1}_0 (0,T;L^2(\partial \Omega^\ep ))$.
\end{assumptions}

\begin{proposition}\label{epexist}  Let us suppose assumptions \ref{Hypoth} . Then problem (\ref{variational1})- (\ref{variational3}) has a unique solution $\{ \mathbf{u}^\ep , p^\ep \} \in H^1 (0,T;  V^\ep ) \times H^1 (0,T; H^1 (\Omega)).$
\end{proposition}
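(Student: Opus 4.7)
The problem being linear, my plan is to build a solution by the Faedo--Galerkin method and obtain uniqueness from linearity combined with the basic energy identity. The structural point I want to exploit is that the coupling in \eqref{variational1}--\eqref{variational2} is antisymmetric: the term $-\alpha\int_{\Omega^\ep} p^\ep\,\mbox{div}\,{\bf \varphi}$ in the elasticity equation and the term $\alpha\int_{\Omega^\ep}\mbox{div}(\p_t{\bf u}^\ep)\zeta$ in the pressure equation cancel when tested respectively with ${\bf \varphi}=\p_t{\bf u}^\ep$ and $\zeta=p^\ep$. The only real subtlety is that \eqref{variational1} is quasi-static: the Galerkin truncation is therefore a DAE and must first be reduced to a pseudoparabolic ODE for the pressure coefficients alone.

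First I would fix Hilbert bases $\{{\bf \varphi}_k\}_{k\ge 1}$ of $V^\ep$ and $\{\zeta_k\}_{k\ge 1}$ of $H^1(\Omega^\ep)$ (for instance, eigenfunctions of the elasticity operator and of the Neumann Laplacian) and set ${\bf v}_N(t)=\sum_{k=1}^N a_k^N(t){\bf \varphi}_k$, $p_N(t)=\sum_{k=1}^N b_k^N(t)\zeta_k$, with $p_N(0)$ the $L^2$-projection of $p_{in}$. At each fixed $t$ the projected \eqref{variational1} reads $Aa^N(t)=Bb^N(t)+f(t)$, where by Korn's inequality the stiffness matrix $A$ is symmetric positive definite. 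Solving $a^N=A^{-1}(Bb^N+f)$ and substituting into the projected \eqref{variational2} produces a linear ODE
\begin{equation*}
\bigl(\gamma G+B^{\top}A^{-1}B\bigr)\dot b^N+\ep^2 K b^N = g(t)-B^{\top}A^{-1}\dot f(t),
\end{equation*}
whose mass operator is positive definite ($G$ is a Gram matrix and $B^{\top}A^{-1}B$ is positive semidefinite), so Cauchy--Lipschitz yields a unique $C^1$ solution on $[0,T]$.

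Next I would derive the a priori estimates. Taking ${\bf \varphi}=\p_t{\bf v}_N$ in \eqref{variational1} and $\zeta=p_N$ in \eqref{variational2} and adding, the cross terms cancel and one obtains
\begin{equation*}
\frac{d}{dt}\Bigl(\tfrac12 a({\bf v}_N,{\bf v}_N)+\tfrac{\gamma}{2}\|p_N\|^2_{L^2(\Omega^\ep)}\Bigr)+\ep^2\|\nabla p_N\|^2_{L^2(\Omega^\ep)} = \frac{d}{dt}\langle\mathcal{P}_\ep,{\bf v}_N\rangle - \langle\p_t\mathcal{P}_\ep,{\bf v}_N\rangle - \ep^2\langle\mathcal{N}^\ep,p_N\rangle,
\end{equation*}
with $a(\cdot,\cdot)$ the elasticity bilinear form on $V^\ep$. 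Korn, Cauchy--Schwarz, Young and Gronwall then give uniform bounds on ${\bf v}_N$ in $L^\infty(0,T;V^\ep)$ and on $p_N$ in $L^\infty(0,T;L^2(\Omega^\ep))\cap L^2(0,T;H^1(\Omega^\ep))$. To lift these bounds to $H^1$ in time I would differentiate both Galerkin equations in $t$ and repeat with tests $\p_t{\bf v}_N$ and $\p_t p_N$; the initial value $\p_t p_N(0)$ is read off from \eqref{variational2} at $t=0$ and is meaningful thanks to $p_{in}\in H^2(\Omega^\ep)$ and to $\mathcal{N}^\ep(0)=0$, which is encoded in the hypothesis $\mathcal{N}^\ep\in H^1_0(0,T;L^2)$. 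The $H^1$-in-time regularity of $\mathcal{P}_\ep$ and $\mathcal{N}^\ep$ is exactly what controls the right-hand sides of the differentiated estimates.

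Finally, passage to the limit and uniqueness. The uniform bounds and weak compactness give subsequences ${\bf v}_N\rightharpoonup{\bf u}^\ep$ in $H^1(0,T;V^\ep)$ and $p_N\rightharpoonup p^\ep$ in $H^1(0,T;H^1(\Omega^\ep))$; density of the Galerkin bases and linearity show that the limit pair satisfies \eqref{variational1}--\eqref{variational2}, and \eqref{variational3} follows from the strong convergence of $p_N(0)$ to $p_{in}$ in $L^2(\Omega^\ep)$. Uniqueness is immediate: applying the basic energy identity above to the difference of two solutions in the homogeneous problem with zero initial data yields
\begin{equation*}
\tfrac12 a({\bf u},{\bf u})(t)+\tfrac{\gamma}{2}\|p(t)\|^2_{L^2(\Omega^\ep)}+\ep^2\int_0^t\|\nabla p(s)\|^2_{L^2(\Omega^\ep)}\,ds=0,
\end{equation*}
hence ${\bf u}\equiv0$ and $p\equiv0$. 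The step I expect to be the most delicate is the $H^1$-in-time regularity, which is precisely why Assumptions~\ref{Hypoth} force $p_{in}\in H^2$ and $\mathcal{N}^\ep(0)=0$: everything there rests on making sense of $\p_t p^\ep$ at $t=0$.
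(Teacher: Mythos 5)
Your proposal is essentially the paper's own argument: a Faedo--Galerkin construction in which the quasi-static elasticity equation is first solved for the displacement coefficients in terms of the pressure coefficients via the invertible stiffness matrix (Korn), the substitution turns the truncated system into a linear ODE for the pressure coefficients alone with a positive-definite mass matrix ($\gamma\,G + \alpha^2 B^{\top}A^{-1}B$), and the energy identity obtained by testing with $\partial_t{\bf v}_N$ and $p_N$, together with a differentiation in time, yields the uniform $H^1(0,T)$ bounds needed to pass to the limit. The only cosmetic difference is that the paper takes the $L^2$-orthonormal pressure basis (so your Gram matrix $G$ is the identity); your handling of the right-hand side in the energy identity and your remark on why $\partial_t p_N(0)$ is well defined are both consistent with what the paper does.
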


\begin{proof}
Let $\beta_j$ be an orthogonal basis of $V^\ep$ with respect to the scalar product $$({\bf f},{\bf g})_V=\int_{\Omega^\ep}2 e({\bf f}):e({\bf g})dx$$ and $\xi_j$ an orthogonal basis of $H^1(\Omega^\ep)$, orthonormal for $L^2 (\Omega^\ep)$. Then, the Galerkin approximation is
 \begin{eqnarray}
&&\int_{\Omega^\ep} 2  e({ \bf{v}_N }): e({\bf \varphi}) dx + \frac{2\nu }{1-2\nu} \int_{\Omega^\ep} \mbox{ div } { (\bf{v}_N) } \mbox{ div }{\bf \varphi}dx -\nonumber\\
 &&\hskip-15pt\alpha \int_{\Omega^\ep}
p_N  \mbox{ div }{\bf \varphi}dx =\int_{\Sigma^{+\ep}\cup\Sigma^{-\ep}} { \bf \mathcal{P}_{\ep}}{\bf \varphi}ds, \; \mbox{ for every }{\bf \varphi}\in V_N, \mbox{ and } t\in (0,T),\label{Gvariational1} \\
&&\gamma \int_{\Omega^\ep} \partial_t p_N \zeta dx + \int_{\Omega^\ep} \alpha \mbox{ div } {\partial_t (\bf{v}_N) }\zeta dx +\ep^2 \int_{\Omega^\ep}
\nabla p_N  \nabla \zeta dx\nonumber\\&&=-\ep^2 \int_{\partial\Omega^{\ep}} {\bf \mathcal{N}_{\ep}}\zeta ds, \quad \mbox{ for every }{\zeta}\in span\{\xi_1,...,\xi_N\},\label{Gvariational2} \\
&&p_N |_{t=0} = p_{in}^N,\label{Gvariational3}
\end{eqnarray}
where $ p_{in}^N$ is a projection of  $p_{in}$ to $span\{\xi_1,...,\xi_N\}$ and $V_N=span\{\beta_1,...,\beta_N\}.$

Therefore, ${\bf v_N}=\Sigma_{l=1}^N b_{lN}(t) {\bf \beta}_l (x)$ and $p_N=\Sigma_{l=1}^N s_{lN}(t) \xi_l(x)$. Using Korn's inequality we obtain that the bilinear form
$$\mathcal{A}({\bf f},{\bf g})_V=\int_{\Omega^\ep}2 e({\bf f}):e({\bf g})dx+\frac{2\nu }{1-2\nu} \int_{\Omega^\ep}\mbox{ div } {\bf f} \mbox{ div } {\bf g}dx$$ is $V^\ep$-elliptic. Let
\begin{gather*}
    A_{kj} = \int_{\Omega^\ep} (2 e(\beta_j ) : e(\beta_k) +  \frac{2\nu }{1-2\nu} \mbox{ div } \beta_j \mbox{ div } \beta_k ) \ dx , \; 1\leq k,j\leq 3, \\
    \mathcal{M}_{kj} = \int_{\Omega^\ep} \xi_j \mbox{ div } \beta_k  \ dx , \; \mathcal{K}_{kj} = \ep^2 \int_{\Omega^\ep} \nabla \xi_j \nabla \xi_k  \ dx , \; 1\leq k,j\leq 3.
\end{gather*}
Then, using the definition of ${\bf v}_N$ and $p_N$, we obtain
\begin{gather}
    A {\bf b}(t) -\alpha \mathcal{M} {\bf s} (t) =\mathcal{F}^N (t) , \label{Matr1}\\
    \gamma \frac{d}{d t} {\bf s} (t) +\alpha \mathcal{M}^\tau \frac{d}{d t} {\bf b} (t) +\mathcal{K} {\bf s} (t) = \mathcal{G}^N  , \quad   {\bf s} (0) = {\bf s}_{0N} , \label{Matr2}
\end{gather}
\begin{equation*}
\mbox{where} \quad {\bf b}(t)=\left[ \begin{array}{l}
b_{1N}(t)\\...\\b_{NN}(t)
\end{array}\right] \quad \mbox{and} \quad {\bf s}(t)=\left[ \begin{array}{l}
s_{1N}(t)\\...\\s_{NN}(t)
\end{array}\right].
\end{equation*}
After inserting \eqref{Matr1} into \eqref{Matr2}, we obtain a linear system of ordinary differential equations for ${\bf s}=(s_{1N}(t),...,s_{NN}(t))^T,$
 \begin{equation*}
     (\gamma I +\alpha^2 \mathcal{M}^\tau  A^{-1} \mathcal{M} )\frac{d}{d t} {\bf s} (t) +\mathcal{K} {\bf s} (t) = \mathcal{G}^N - \alpha \mathcal{M}^\tau  A^{-1}  \frac{d}{d t} \mathcal{F}^N (t) , \quad   {\bf s} (0) = {\bf s}_{0N} .
 \end{equation*}
 Since the $ \big( \mathcal{M}^\tau  A^{-1} \mathcal{M} \xi , \xi \big) \geq c ||  \mathcal{M} \xi ||_2 $, for all $\xi \in \mathbb{R}^N$,
 it has a unique solutions for all times. Moreover,  ${\bf s} \in H^{2}(0,T)$ implying the same time regularity for ${\bf v}_N$ and $p_N$. Next, we show {\it a priori} estimates, uniform in $N$. Testing \eqref{Gvariational1} by $\partial_t {\bf v}_N$, and \eqref{Gvariational2} by $p_N$, integrating in space and summing up, we obtain
\begin{eqnarray*}
&&\partial_t\left\{ \int_{\Omega^\ep} |e({\bf v}_N)|^2dx +  \frac{\nu }{1-2\nu}\int_{\Omega^\ep} |\mbox{ div } {\bf v}_N|^2dx+\frac{\gamma}{2}\int_{\Omega^\ep} |p_N|^2dx\right\} \\
 && + \int_{\Omega^\ep} \ep^2  |\nabla p_N(t)|^2 dx = \partial_t \{ \int_{\Sigma^{+\ep}\cup\Sigma^{-\ep}}  { \bf \mathcal{P}_{\ep}}{\bf v}_N ds -\ep^2 \int_{\partial\Omega^{\ep}} {\bf \mathcal{N}_{\ep}}p_N ds
\}\\
 &&- \int_{\Sigma^{+\ep}\cup\Sigma^{-\ep}}\partial_t {\bf \mathcal{P}_{\ep}}{\bf v}_Nds +\int_{\partial\Omega^{\ep}} \ep^2 \partial_t{\bf \mathcal{N}_{\ep}}p_N ds.
\end{eqnarray*}
It yields
\begin{eqnarray*}
&& \|{\bf v}_N \|_{L^{\infty}(0,T;V^\ep)}+ \|\nabla p_N\|_{L^{2}(0,T;L^{2}(\Omega^\ep))}+ \| p_N \|_{L^{\infty}(0,T;L^{2}(\Omega^\ep))}\\
 &\leq& C (\ep) \left\{ 
   \|{\bf \mathcal{P}}_{\ep}\|_{H^{1}(0,T;L^{2}(\partial\Omega^\ep \backslash\Gamma^\ep))}+  \|{\bf \mathcal{N}}^{\ep}\|_{H^{1}(0,T;L^{2}(\partial\Omega^\ep ))}\right\}.
\end{eqnarray*}
Next we use (\ref{Gvariational1})-(\ref{Gvariational2}) to calculate $ \p_t {\bf v}_N$ and $\p_t p_N$ at $t=0$.
Taking the time derivative provides $H^1$-regularity in time. Finally, passing to the limit $N\rightarrow \infty$ and uniqueness are obvious.

\end{proof}
}

\subsection{Convergence results}\label{subConv}
In the remainder of the paper we make the following assumptions
\begin{assumptions}\label{Hypoth1}
 {  For simplicity, we assume that $p_{in} =0$, that $V$ has a compact support in $\p \o \times (0,T]$ and
 that $U^{1}$  and $\mathcal{P_{\ep}}$ have compact support in $\omega \times (0,T]$.}
\end{assumptions}

We start by formulating the scaled limit displacements and pressures:

Let the scaled in-plane displacements and the pressure $\{ {\bf \tilde w}^0 , \pi_m \} \in H^1 (0,T; H^1_0 (\o ))^3$ be given by
\begin{gather}
     (\gamma  + \frac{\alpha^2  (1-2\nu)}{2 (1-\nu)} ) \pi_m  + \frac{ \alpha (1-2\nu )}{1-\nu} \mbox{ div}_{y_1 , y_2} (w^0_{1} , w^0_{2} ) =0, \; 
     \mbox{ in } \; \o \times (0,T); \label{StrechJ01Tcor} \\
    -\Delta_{y_1 , y_2} ( w^0_{1} , w^0_{2} ) - \frac{1+\nu}{1-\nu} \nabla_{y_1 , y_2} \mbox{ div}_{y_1 , y_2} (w^0_{1} , w^0_{2} )  +
     \frac{\alpha (1-2\nu)}{1-\nu} \nabla_{y_1 , y_2} \pi_m \notag \\
     = {  \frac{1}{2} }{  ( \mathcal{P}_1 |_{y_3 =1} + \mathcal{P}_1 |_{y_3 =-1} , \mathcal{P}_2 |_{y_3 =1} + \mathcal{P}_2 |_{y_3 =-1} )}, \quad
     \mbox{ in } \; \o \times (0,T) . \label{StrechJ04Tcor}
\end{gather}
Theory of the stationary 2D Navier system of linear elasticity yields
\begin{lemma} Under assumptions (\ref{Hypoth})-(\ref{Hypoth1}), problem (\ref{StrechJ01Tcor})-(\ref{StrechJ04Tcor}) has a unique solution $\{ {\bf \tilde w}^0 , \pi_m \} \in H^{1} (0,T;  H^1_0 (\o ))^3 $.
\end{lemma}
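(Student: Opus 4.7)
The plan is to eliminate the pressure $\pi_m$ algebraically using (\ref{StrechJ01Tcor}) and reduce the coupled problem to a classical stationary 2D Navier linear elasticity system with modified Gassmann--Biot moduli, which is then handled by Lax--Milgram at each time $t$.

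First, I observe that (\ref{StrechJ01Tcor}) contains no derivatives of $\pi_m$. Setting
$$a := \gamma + \frac{\alpha^{2}(1-2\nu)}{2(1-\nu)}, \qquad b := \frac{\alpha(1-2\nu)}{1-\nu},$$
and noting that $a>0$ under the standing hypotheses $0<\nu<1/2$, $\alpha>0$, $\gamma>0$, I can solve (\ref{StrechJ01Tcor}) pointwise for
$$\pi_m = -\frac{b}{a}\,\mbox{div}_{y_1,y_2}(w^0_1,w^0_2).$$
Substituting this into (\ref{StrechJ04Tcor}) gives the closed reduced system
$$-\Delta_{y_1,y_2}(w^0_1,w^0_2) - \Bigl(\frac{1+\nu}{1-\nu}+\frac{b^{2}}{a}\Bigr)\nabla_{y_1,y_2}\,\mbox{div}_{y_1,y_2}(w^0_1,w^0_2) = \mathbf{F}(y_1,y_2,t),$$
posed on $\omega$ with the homogeneous Dirichlet condition $(w^0_1,w^0_2) = 0$ on $\partial\omega$, where $\mathbf{F}$ is the symmetrized surface loading on the right-hand side of (\ref{StrechJ04Tcor}). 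The modified coefficient in front of $\nabla\,\mbox{div}$ is precisely the effective Gassmann--Biot parameter announced in the abstract, and it is strictly positive.

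Next, for each fixed $t \in (0,T)$, I would apply Lax--Milgram on $H^{1}_0(\omega)^{2}$ to the bilinear form
$$B(\mathbf{u},\mathbf{v}) = \int_{\omega} 2\,e(\mathbf{u}):e(\mathbf{v})\,dy + \Bigl(\frac{2\nu}{1-2\nu}+\frac{b^{2}}{a}\Bigr)\int_{\omega}\mbox{div}\,\mathbf{u}\,\mbox{div}\,\mathbf{v}\,dy.$$
Coercivity comes from Korn's inequality applied to the symmetric-gradient term, while the $\mbox{div}$--$\mbox{div}$ contribution is nonnegative since its coefficient is positive; continuity of $B$ and of the right-hand side is immediate from $\mathcal{P}_i|_{y_3=\pm1}\in L^2(\omega)$. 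This yields a unique $(w^0_1,w^0_2)(\cdot,t)\in H^{1}_0(\omega)^{2}$, and $\pi_m(\cdot,t)$ is then recovered from the algebraic relation.

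Finally, for the $H^{1}$-in-time regularity, I would differentiate the Lax--Milgram equation in $t$: the elliptic operator is time-independent, and Assumption \ref{Hypoth} gives the loading in $H^{1}(0,T;L^{2})$, so $\partial_t(w^0_1,w^0_2)\in L^{2}(0,T; H^{1}_0(\omega)^{2})$; the bound on $\partial_t\pi_m$ follows from the algebraic formula. The step that I expect to require the most care is the claim that $\pi_m$ belongs to $H^{1}_0(\omega)$ rather than only $L^{2}(\omega)$: this rests on $H^{2}$-elliptic regularity for $(w^0_1,w^0_2)$ (available since $\partial\omega\in C^{1}$) combined with the compact-support assumption on the data (Assumption \ref{Hypoth1}), which localizes the elasticity problem away from $\partial\omega$ and thereby controls the trace of $\mbox{div}(w^0_1,w^0_2)$ on the boundary. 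Everything else is a routine application of 2D linear elasticity theory.
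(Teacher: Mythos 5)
Your reduction is precisely the argument the paper has in mind: the paper offers no proof beyond the phrase ``Theory of the stationary 2D Navier system of linear elasticity yields,'' and eliminating $\pi_m$ through the algebraic relation (\ref{StrechJ01Tcor}) (which indeed contains no derivatives of $\pi_m$, and whose coefficient $a>0$), substituting into (\ref{StrechJ04Tcor}) to obtain a 2D Navier system with augmented $\nabla\,\mathrm{div}$ coefficient, and then applying Lax--Milgram with the first Korn inequality on $H^1_0(\omega)^2$ and differentiating in time for the $H^1(0,T)$ regularity, is exactly that standard route. One algebra slip: the weak form on $H^1_0(\omega)^2$ of $-\Delta-\frac{1+\nu}{1-\nu}\nabla\,\mathrm{div}$ is $\int_\omega 2\,e(\mathbf{u}):e(\mathbf{v})\,dy+\frac{2\nu}{1-\nu}\int_\omega \mathrm{div}\,\mathbf{u}\,\mathrm{div}\,\mathbf{v}\,dy$ (consistent with (\ref{LimitVKL02Dcor}), where the coefficient is $\frac{1-2\nu}{1-\nu}\cdot\frac{2\nu}{1-2\nu}$), not $\frac{2\nu}{1-2\nu}$; since both constants are positive for $0<\nu<1/2$, coercivity and the rest of the argument are unaffected.

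The one genuinely flawed step is your final claim that the compact support of the data ``localizes the elasticity problem away from $\partial\omega$ and thereby controls the trace of $\mathrm{div}(w^0_1,w^0_2)$.'' Elliptic systems have no finite propagation: a compactly supported right-hand side does not produce a solution vanishing near the boundary, and even with full $H^2(\omega)$ regularity (itself delicate for a boundary that is only $C^1$) the trace of $\mathrm{div}(w^0_1,w^0_2)$ on $\partial\omega$ has no reason to vanish --- only the displacement itself satisfies the Dirichlet condition, not its normal derivative. So this argument cannot yield $\pi_m(\cdot,t)\in H^1_0(\omega)$. What your proof honestly delivers is $\pi_m\in H^1(0,T;L^2(\omega))$ via the algebraic formula, which is all that is ever used of $\pi_m$ in the rest of the paper (it enters the convergence statements only through $L^2$-type norms); the membership $\pi_m\in H^1_0(\omega)$ in the lemma's statement should be read as loose notation for the triple rather than as a claim requiring --- or admitting --- the boundary-trace argument you sketch. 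Apart from this last paragraph, the proposal is correct and coincides with the standard proof the paper implicitly invokes.
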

The pressure fluctuation $\pi_w \in H^1 (0,T; H^1 ( \O ))$  is given by
\begin{gather}
      (\gamma + \alpha^2 \frac{1-2\nu}{2(1-\nu)} )   \p_t \pi_w - \frac{\p^2  \pi_w}{\p y_3^2}   -
     \alpha \frac{1-2\nu}{1-\nu} y_3 \Delta _{y_1 , y_2 } \p_t  w^0_{3} =0  \mbox{ in }  \Omega \times (0,T),  \label{presstabD1cor} \\
    \p_{y_3}  \pi_w |_{y_3 =1} = \p_{y_3}  \pi_w  |_{y_3 =-1} =-U^1 \ \mbox{ in } \  (0,T),  
     \quad \pi_w   |_{t=0} =0 \ \mbox{ in } \ \Omega .  \label{presstabD3cor}
\end{gather}
and the vertical displacement $w^0_3 \in H^1 (0,T; H^2_0 (\o)) $ by
\begin{gather}
    \frac{4}{3} \frac{1}{1-\nu} \Delta^2_{y_1 , y_2 } w^0_{3} +\frac{\alpha (1-2\nu )}{1-\nu} \Delta_{y_1 , y_2 } \int^1_{-1} y_3 \pi^0 \ dy_3 =\mathcal{P}_{3} |_{y_3=1} +\mathcal{P}_{3} |_{y_3=1} +\notag \\
     \mbox{ div }_{y_1 , y_2} (\mathcal{P}_{1} |_{y_3=1} + \mathcal{P}_{1} |_{y_3=-1} , \mathcal{P}_{2}  |_{y_3=1} + \mathcal{P}_{2} |_{y_3=1})  \quad \mbox{ in } \quad  \Omega \times (0,T),
     \label{Bendtab0cor1}
     \end{gather}
     \begin{proposition} Under assumptions (\ref{Hypoth})-(\ref{Hypoth1}), problem (\ref{presstabD1cor})- 
     (\ref{Bendtab0cor1}) has a  unique solution $ \{ w^0_{3}, \pi_w  \} \in H^1 (0,T; H^2_0 (\o )) \times H^1 ( 0,T; L^2_0 (\Omega )), \; \p_{y_3} \pi_w \in H^{1} ( 0,T; L^2 (\Omega))$, $\int^1_{-1} \pi_w \ dy_3 =0.$
\end{proposition}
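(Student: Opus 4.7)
The plan is to treat (\ref{presstabD1cor})--(\ref{Bendtab0cor1}) as a coupled linear evolution system and produce a solution by approximation combined with a priori estimates, with every bound flowing from a single energy identity in which the biharmonic equation is used to convert the cross-coupling between pressure and deflection into the time derivative of a positive quadratic form. Uniqueness will follow from the same identity applied to the difference of two solutions. Write $A=\gamma+\alpha^2(1-2\nu)/(2(1-\nu))>0$, $B=\alpha(1-2\nu)/(1-\nu)$, $C=4/(3(1-\nu))$ and introduce the first moment $m(y_1,y_2,t)=\int_{-1}^1 y_3\,\pi_w\,dy_3$. Because $\pi_m$ from the previous lemma is $y_3$-independent, $\int_{-1}^1 y_3\,\pi^0\,dy_3=m$, so (\ref{Bendtab0cor1}) reads $C\Delta^2 w^0_3+B\Delta m=f$ with $f$ denoting its right-hand side. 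Integrating (\ref{presstabD1cor}) in $y_3$ and using the Neumann condition (\ref{presstabD3cor}) together with $\int_{-1}^1 y_3\,dy_3=0$ yields $\partial_t\int_{-1}^1\pi_w\,dy_3\equiv 0$, so the zero-mean constraint follows automatically from $\pi_w|_{t=0}=0$.

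The core identity is obtained by testing (\ref{presstabD1cor}) with $\pi_w$ over $\Omega$: integration by parts in $y_3$ produces $\tfrac{A}{2}\tfrac{d}{dt}\|\pi_w\|_{L^2(\Omega)}^2+\|\partial_{y_3}\pi_w\|_{L^2(\Omega)}^2$ together with a $U^1$-trace term on the horizontal faces, while the source contribution collapses to $-B\int_\omega m\,\Delta_{y_1,y_2}\partial_t w^0_3\,dy_1\,dy_2$. Substituting $B\Delta m=f-C\Delta^2 w^0_3$ from the biharmonic equation and integrating by parts twice in $(y_1,y_2)$, using $w^0_3\in H^2_0(\omega)$ to discard all boundary terms, rewrites this coupling as $-\int_\omega f\,\partial_t w^0_3\,dy_1\,dy_2+\tfrac{C}{2}\tfrac{d}{dt}\|\Delta_{y_1,y_2} w^0_3\|_{L^2(\omega)}^2$. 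Absorbing the $\pi_w$ trace by a standard trace inequality and handling $\int f\,\partial_t w^0_3$ by integration by parts in time (the compact support of the data in $(0,T]$ makes the boundary contribution at $t=0$ vanish), one concludes uniform bounds on $\|\pi_w\|_{L^\infty(0,T;L^2(\Omega))}$, on $\|\Delta w^0_3\|_{L^\infty(0,T;L^2(\omega))}$ and on $\|\partial_{y_3}\pi_w\|_{L^2(0,T;L^2(\Omega))}$.

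For existence I set up a standard Galerkin approximation---for instance, expand $w^0_{3,N}$ in clamped biharmonic eigenfunctions of $\omega$ and $\pi_{w,N}$ in a tensor product of these with an $L^2$-orthonormal zero-mean basis of $H^1(-1,1)$. The projected biharmonic equation invertibly expresses the $w$-coefficients in terms of the first $y_3$-moments of the $\pi$-coefficients; substitution into the projected pressure equation yields a linear first-order ODE whose mass matrix is positive definite by the same positivity underlying the energy identity, so Cauchy--Lipschitz gives unique solvability in time. The Galerkin analogue of the identity above delivers uniform bounds; weak compactness and passage to the limit produce a solution. Differentiating the system once more in time and repeating the estimate, using the compact-support-in-$(0,T]$ hypothesis to enforce the necessary compatibility at $t=0$, upgrades to the claimed $H^1$-in-time regularity. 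Uniqueness is immediate: for the difference of two solutions the right-hand side vanishes in the identity, forcing $\pi_w\equiv 0$ and $\Delta w^0_3\equiv 0$, and the latter gives $w^0_3\equiv 0$ in $H^2_0(\omega)$.

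The main obstacle is the degenerate parabolicity of (\ref{presstabD1cor}): there is no horizontal smoothing on $\pi_w$, so the source $y_3\,\Delta_{y_1,y_2}\partial_t w^0_3$ cannot be absorbed by any norm of $\pi_w$ taken alone. Closing the estimate requires reading the biharmonic equation backwards, so that the dangerous pairing $\int_\omega m\,\Delta_{y_1,y_2}\partial_t w^0_3$ is reinterpreted as $\tfrac{C}{2}\tfrac{d}{dt}\|\Delta w^0_3\|^2$ up to a forcing term; this cancellation works precisely because the coupling constant $B$ appears with matching coefficients in both equations, a vestige at $\ep\to 0$ of the variational structure of Biot's quasi-static system.
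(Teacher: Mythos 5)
Your argument is correct and rests on the same key mechanism as the paper: testing the pressure equation with $\pi_w$ and rewriting the coupling term $-B\int_\omega m\,\Delta_{y_1,y_2}\partial_t w^0_3$ as $\tfrac{C}{2}\tfrac{d}{dt}\|\Delta_{y_1,y_2} w^0_3\|^2_{L^2(\omega)}$ plus a forcing term via the bending equation is precisely the cancellation the paper exploits (there phrased as taking $g_3=\partial_t w^0_3$ in (\ref{LimitVKL0Bcor1}), $\zeta=\pi_w$ in the pressure equation, and noting that the two cross terms cancel). The only difference is that you additionally construct existence by a Galerkin scheme, whereas the paper proves only uniqueness for this system, existence being already furnished by the weak limits of the $\ep$-problem; both routes are sound.
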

\begin{proof} See Proposition \ref{Wpbending}.
\end{proof}
\begin{theorem} \label{th1} ({\bf Convergence of the displacement and the pressure}) $\; $ Let $ \{ {\bf \tilde w}^0 , \pi_m \} $   be given by  (\ref{StrechJ01Tcor})-(\ref{StrechJ04Tcor}) and $\{ w_3^0 , \pi_w \} $ by (\ref{presstabD1cor})-(\ref{Bendtab0cor1}). Furthermore let
\begin{gather}
   E_{cor}^0 (x_1 , x_2 , x_3 ,t)= \frac{\alpha (1-2\nu )}{2(1-\nu )} (\pi_w (x_1 , x_2 , \frac{x_3}{\ep} , t) +\pi_m (x_1 , x_2 ,t))
   -\notag \\
   \frac{\nu}{1-\nu } (\mbox{ div }_{x_1,x_2}{\bf \tilde w^0} (x_1 , x_2 , t) - \frac{x_3}{\ep} \Delta_{x_1 , x_2} w_3^0 (x_1 , x_2 , t)).\label{defE0}
\end{gather}
Then we have the following convergences
\begin{gather}
   \lim_{\ep \to 0} \max_{0\leq t \leq T }\frac{1}{|\Omega^\ep |}\int_{\Omega^\ep}  \ep^{-2} | \frac{\p u_3^\ep}{\p x_3}  -   \ep  E^0_{cor}|^2 \ dx =0,\label{Conv1}
\\
   \lim_{\ep \to 0} \max_{0\leq t \leq T }  \frac{1}{|\Omega^\ep |}\int_{\Omega^\ep} | e_{ij} \bigg( \frac{{\bf u}^\ep }{\ep} - {\bf w}^0
  + \frac{ x_3}{\ep} \nabla_{x_1,x_2} w_3^0  \bigg)
  |^2 \ dx =0, \; 1\leq i,j \leq 2, \label{Conv2} \\
\lim_{\ep \to 0} \max_{0\leq t \leq T }   \frac{1}{|\Omega^\ep |}\int_{\Omega^\ep} | \frac{ e_{j3} \big( {\bf u}^\ep \big)}{\ep}  |^2\ dx =0, \; 1\leq j \leq 2, \label{Conv3} \\
    \lim_{\ep \to 0} \max_{0\leq t \leq T }  \frac{1}{|\Omega^\ep |}\int_{\Omega^\ep} | \frac{u^\ep_j }{\ep} - w^0_j 
    + \frac{ x_3}{\ep} \nabla_{x_1,x_2} w_3^0   |^2\ dx =0, \; 1\leq j \leq 2,\label{Conv3a} \\
    \lim_{\ep \to 0} \max_{0\leq t \leq T }  \frac{1}{|\Omega^\ep |}\int_{\Omega^\ep} | u^\ep_3 - w^0_3 (x_1 , x_2 , t) |^2 \ dx =0,\label{Conv4a} \\
    \lim_{\ep \to 0} \max_{0\leq t \leq T } \frac{1}{|\Omega^\ep |}\int_{\Omega^\ep} | \frac{p^\ep  }{\ep} - \pi_w (x_1 , x_2 , \frac{x_3}{\ep} ,t) - \pi_m (x_1 , x_2 ,t) |^2 \ dx =0,\label{Conv4}
\\
   \lim_{\ep \to 0} \max_{0\leq t \leq T } \frac{1}{|\Omega^\ep |}\int_{\Omega^\ep} \{ | \nabla_{x_1 , x_2}  p^\ep |^2 +  | \p_{x_3} p^\ep - \p_{y_3} \pi_w |_{y_3 = x_3 /\ep}|^2 \}\ dx =0.\label{Conv5}
\end{gather}
\end{theorem}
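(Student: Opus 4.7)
\medskip
\noindent\textbf{Proof proposal.}
The plan is to proceed by the classical Ciarlet--Destuynder scheme adapted to the Biot system, and this will occupy Sections \ref{scal}--\ref{strongstressco} as the authors announce. First I would perform the change of variables $y_3 = x_3/\ep$ on $\Omega^\ep$ to obtain a family of problems on the fixed domain $\Omega = \o \times (-1,1)$, together with rescaled unknowns suggested by the statement of the theorem: $U^\ep_\alpha(y,t) = \ep^{-1} u^\ep_\alpha(x,t)$ for $\alpha =1,2$, $U^\ep_3(y,t) = u^\ep_3(x,t)$, and $P^\ep(y,t) = \ep^{-1} p^\ep(x,t)$. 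After the change of variables, transverse derivatives acquire a factor $\ep^{-1}$, and the system becomes a singular perturbation whose ``small'' parameter multiplies the out-of-plane terms of the elasticity operator and the in-plane part of the Darcy operator.

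Next I would test the rescaled variational formulation \eqref{variational1} with $\p_t {\bf u}^\ep$ and \eqref{variational2} with $p^\ep$, adding the two identities to cancel the cross coupling $\alpha \, \mbox{div}\,\p_t{\bf u}^\ep \cdot p^\ep$. After integration in time and use of the scaled loading hypothesis $\mathcal{P}_\ep = \ep^2(\mathcal{P}_1,\mathcal{P}_2,0)+\ep^3\mathcal{P}_3 {\bf e}^3$, this yields a priori bounds, uniform in $\ep$, of the form
\begin{equation*}
\|e_{y}(U^\ep)\|_{L^\infty(0,T;L^2(\Omega))} + \|P^\ep\|_{L^\infty(0,T;L^2(\Omega))} + \|\p_{y_3} P^\ep\|_{L^2(0,T;L^2(\Omega))} + \ep\|\nabla_{y_1,y_2} P^\ep\|_{L^2(0,T;L^2(\Omega))} \le C,
\end{equation*}
where $e_y$ denotes the strain in the rescaled variables with the appropriate $\ep^{-1}$ factors on transverse components. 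A scaled Korn inequality on $\Omega$ then controls $U^\ep$ in $H^1(\Omega)$. Extracting weakly convergent subsequences and examining the components $e_{\alpha 3}(U^\ep)$ and $e_{33}(U^\ep)$, which carry explicit powers of $\ep$, forces the weak limit to have the Kirchhoff--Love form $U^0_\alpha = w^0_\alpha(x_1,x_2,t) - y_3 \,\p_\alpha w^0_3$, $U^0_3 = w^0_3(x_1,x_2,t)$, with $w^0_3 \in H^2_0(\o)$.

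The limit equations for $(w^0_1,w^0_2)$, $\pi_m$, $\pi_w$, $w^0_3$ are then identified by choosing appropriate test functions in the rescaled variational formulation: Kirchhoff--Love fields for \eqref{variational1} yield the 2D Navier system \eqref{StrechJ01Tcor}--\eqref{StrechJ04Tcor} and the bending equation \eqref{Bendtab0cor1}, while a test function independent of $y_1,y_2$ extracts the algebraic relation between $\pi_m$ and the in-plane divergence, and a general test function in \eqref{variational2} identifies the transverse parabolic problem \eqref{presstabD1cor}--\eqref{presstabD3cor}. Uniqueness of the limit system (guaranteed by the lemma and Proposition \ref{Wpbending} stated just above) upgrades the subsequential convergence to full weak convergence.

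The remaining and most delicate step is upgrading to the strong statements \eqref{Conv1}--\eqref{Conv5}. I would introduce the corrector $E^0_{cor}$ of \eqref{defE0}: it is precisely the first-order term in the asymptotic ansatz for $\p_{y_3} U^\ep_3$ that couples the Poisson contraction $-\nu(1-\nu)^{-1}(\mbox{div}_{x_1,x_2}\tilde{\bf w}^0 - y_3 \Delta w^0_3)$ with the pore-pressure-induced dilation $\alpha(1-2\nu)(2(1-\nu))^{-1}(\pi_w+\pi_m)$. Plugging the ansatz
\begin{equation*}
U^{\ep,\ast}_\alpha = w^0_\alpha - y_3 \,\p_\alpha w^0_3 + \ep\, \theta_\alpha(y,t),\qquad U^{\ep,\ast}_3 = w^0_3 + \ep\, y_3\, E^0_{cor},\qquad P^{\ep,\ast} = \ep(\pi_m + \pi_w),
\end{equation*}
into the rescaled Biot energy and expanding, the cross terms produce exactly the limit equations, so the residual energy goes to zero. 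Coercivity of the elastic bilinear form $\mathcal A$ (already verified in the proof of Proposition \ref{epexist}) together with a Gronwall argument applied to the energy of the difference then converts this into the pointwise-in-time strong convergences \eqref{Conv1}--\eqref{Conv3a}. Once the strains converge strongly, \eqref{Conv3a}--\eqref{Conv4a} follow from Korn and Poincar\'e, while \eqref{Conv4}--\eqref{Conv5} for the pressure are obtained from the difference of the two pressure energy identities: the parabolic coercivity in the $y_3$-direction controls $\p_{y_3}(P^\ep - \pi_w)$, and the coupled term $\p_t\,\mbox{div}\,{\bf u}^\ep$ in \eqref{Coweq2} is handled by the already established strong convergence of the displacement.

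The main obstacle, as I see it, is the asymmetric degeneracy of the pressure equation: only the transverse part of the Laplacian survives in the limit, so compactness of $P^\ep$ in the in-plane directions cannot come from parabolic regularity but must be extracted from its coupling to $\mbox{div}\,\p_t{\bf u}^\ep$. Ensuring that the corrector $E^0_{cor}$ reproduces this coupling \emph{simultaneously} at the level of the Poisson contraction and of the transverse pore pressure --- so that both residuals in the elastic and the fluid energies vanish at the same rate --- is the technical heart of the argument, and is what dictates the precise form \eqref{defE0}.
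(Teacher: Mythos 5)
Your plan coincides with the paper's proof: the same rescaling to $\Omega=\o\times(-1,1)$, the same a priori estimate obtained by testing with $\partial_t{\bf w}(\ep)$ and $\pi(\ep)$ so that the coupling terms cancel, identification of the Kirchhoff--Love limit and of $E^*_{cor}$ as the weak limit of $\ep^{-2}e_{33}({\bf w}(\ep))$, and finally strong convergence via the energy identity for the corrected unknowns (Proposition \ref{proofstrong}). Two details need fixing in execution: the vertical corrector must be $w^0_3+\ep^2\,\Psi_\ep\int_0^{y_3}E^*_{cor}\,da$ rather than $w^0_3+\ep\,y_3E^0_{cor}$ --- your version is off by one power of $\ep$ and is not an antiderivative of $E^*_{cor}$, so the singular term $\ep^{-4}\|\partial_{y_3}\xi_3(\ep)\|^2_{L^2}$ in the energy identity would not remain bounded --- and the cutoff $\Psi_\ep$ supported away from $\partial\o$ is essential so that the corrected displacement still vanishes on the lateral boundary and is an admissible test function.
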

\begin{proof} It is a direct consequence of Proposition \ref{proofstrong} . It is enough to rescale back the variables and the unknowns.
\end{proof}
\begin{remark} Convergence (\ref{Conv5}) justifies Taber's hypothesis (\ref{pressureapp}). \end{remark}
\begin{theorem} \label{th2} ({\bf Convergence of Biot's stress}) $\; $ Let us suppose assumptions from Theorem \ref{th1}.
Then we have
\begin{gather}
\lim_{\ep \to 0} \max_{0\leq t \leq T } \frac{1}{|\Omega^\ep |}\int_{\Omega^\ep} |
\frac{\sigma_{jj}^\ep   }{\ep} -2 e_{jj} (\mathbf{w}^0  ) +
2 \frac{x_3}{\ep} \frac{\p^2 w^0_3 }{\p x_j^2}  + 2 E^0_{cor} |^2 \ dx =0, \; j=1,2, \label{D02O}  \\
 \lim_{\ep \to 0} \max_{0\leq t \leq T } \frac{1}{|\Omega^\ep |}\int_{\Omega^\ep} | \frac{\sigma_{12}^\ep  }{\ep} -2 e_{12} (\mathbf{w}^0  ) +
2 \frac{x_3}{\ep} \frac{\p^2 w^0_3 }{\p x_1 \p x_2} |^2 \ dx =0,
\label{D03O} \\
\lim_{\ep \to 0} \max_{0\leq t \leq T } \frac{1}{|\Omega^\ep |}\int_{\Omega^\ep} | \frac{\sigma_{j3}^\ep }{\ep}   |^2 \ dx =0, \; j=1,2,3.
\label{D04O} 
\end{gather}\end{theorem}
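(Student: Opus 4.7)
The plan is to obtain Theorem~\ref{th2} as a direct algebraic consequence of Theorem~\ref{th1} by expanding the constitutive law (\ref{Sig}) componentwise and matching the limits via the definition (\ref{defE0}) of $E^0_{cor}$. Concretely, I start from
\begin{equation*}
\sigma_{ij}^\ep = 2 e_{ij}({\bf u}^\ep ) + \delta_{ij}\Bigl(\frac{2\nu}{1-2\nu} \mbox{ div }{\bf u}^\ep- \alpha p^\ep \Bigr),
\end{equation*}
divide by $\ep$, and plug in the approximations supplied by Theorem~\ref{th1}. For this purpose it is useful to decompose
\begin{equation*}
\frac{1}{\ep}\mbox{ div }{\bf u}^\ep = \frac{e_{11}({\bf u}^\ep)}{\ep} + \frac{e_{22}({\bf u}^\ep)}{\ep}+ \frac{1}{\ep}\frac{\p u_3^\ep}{\p x_3},
\end{equation*}
so that each factor on the right-hand side is controlled by one of the convergences (\ref{Conv1})--(\ref{Conv4}) of Theorem~\ref{th1}.

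The shear components are immediate: $\sigma_{j3}^\ep/\ep = 2 e_{j3}({\bf u}^\ep)/\ep$ for $j=1,2$ tends to $0$ by (\ref{Conv3}), which proves (\ref{D04O}) in these two cases. For the off-diagonal in-plane component $\sigma_{12}^\ep/\ep = 2 e_{12}({\bf u}^\ep)/\ep$, applying (\ref{Conv2}) with $i=1$, $j=2$ and using the linearity of $e_{12}$ together with $e_{12}\big((x_3/\ep)\nabla_{x_1,x_2} w_3^0\big) = (x_3/\ep)\,\p^2 w_3^0/(\p x_1 \p x_2)$ yields (\ref{D03O}) at once.

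The remaining in-plane diagonal components $\sigma_{jj}^\ep/\ep$ ($j=1,2$) and the transverse normal component $\sigma_{33}^\ep/\ep$ are where the interesting algebra takes place. Setting $D := \mbox{div}_{x_1,x_2}{\bf \tilde w^0} - (x_3/\ep)\Delta_{x_1,x_2} w_3^0$ and $\Pi := \pi_w(x_1,x_2,x_3/\ep,t)+\pi_m(x_1,x_2,t)$, Theorem~\ref{th1} gives in $L^\infty(0,T;L^2(\Omega^\ep))$ (after normalization by $|\Omega^\ep|$)
\begin{equation*}
\frac{e_{jj}({\bf u}^\ep)}{\ep} \longrightarrow e_{jj}({\bf w}^0) - \frac{x_3}{\ep}\frac{\p^2 w_3^0}{\p x_j^2}, \qquad \frac{1}{\ep}\frac{\p u_3^\ep}{\p x_3} \longrightarrow E^0_{cor}, \qquad \frac{p^\ep}{\ep} \longrightarrow \Pi,
\end{equation*}
so that $\ep^{-1}\mbox{ div }{\bf u}^\ep \to D + E^0_{cor}$. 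Substituting into $\sigma_{jj}^\ep/\ep$ gives the candidate limit
\begin{equation*}
2 e_{jj}({\bf w}^0) - 2\frac{x_3}{\ep}\frac{\p^2 w_3^0}{\p x_j^2} + \frac{2\nu}{1-2\nu}\bigl(D+E^0_{cor}\bigr) - \alpha\, \Pi,
\end{equation*}
and (\ref{D02O}) then reduces to verifying the scalar identity $\frac{2\nu}{1-2\nu}\bigl(D+E^0_{cor}\bigr) - \alpha\Pi = -2 E^0_{cor}$, which in turn is equivalent to $E^0_{cor} = \frac{\alpha(1-2\nu)}{2(1-\nu)}\Pi - \frac{\nu}{1-\nu} D$; this is exactly the definition (\ref{defE0}). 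The same procedure for $\sigma_{33}^\ep/\ep = 2\ep^{-1}\p u_3^\ep/\p x_3 + \frac{2\nu}{(1-2\nu)\ep}\mbox{ div }{\bf u}^\ep - \alpha p^\ep/\ep$ yields the candidate limit $2 E^0_{cor} + \frac{2\nu}{1-2\nu}(D+E^0_{cor}) - \alpha\Pi$, which collapses to $0$ after multiplying (\ref{defE0}) by $2(1-\nu)/(1-2\nu)$; this gives the $j=3$ case of (\ref{D04O}).

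I do not foresee a genuine obstacle: since the convergences in Theorem~\ref{th1} are strong in $L^\infty(0,T;L^2(\Omega^\ep))$ (with the $|\Omega^\ep|^{-1}$ normalization) and the operations above are linear combinations with constant coefficients, no compactness or nonlinear passage to the limit is required. The only point requiring care is bookkeeping of the two algebraic identities that turn the formal limits of $\sigma_{jj}^\ep/\ep$ and $\sigma_{33}^\ep/\ep$ into the expressions claimed in (\ref{D02O}) and (\ref{D04O}); both follow mechanically from the defining formula (\ref{defE0}) for $E^0_{cor}$.
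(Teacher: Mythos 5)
Your proposal is correct and follows essentially the same route as the paper: the paper deduces Theorem \ref{th2} from Proposition \ref{stresscon}, whose content is precisely the componentwise expansion of the constitutive law combined with the strong convergences of Proposition \ref{proofstrong} and the algebraic identity $D(\ep)\to -2E^*_{cor}$, which is the identity you verify from (\ref{defE0}). The only difference is that you perform the algebra after rescaling back to $\Omega^\ep$ (i.e.\ starting from Theorem \ref{th1}) rather than before, which changes nothing of substance.
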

\begin{proof} It is a direct consequence of Proposition \ref{stresscon} . It is enough to rescale back the variables and the stresses.
\end{proof}
\begin{remark} We note that convergences (\ref{D04O}) justify Kirchoff's hypothesis for the case of the poroelastic plate.
\end{remark}


\section{The scaled problem}\label{scal}

 Our objective is to study the behavior of the displacement ${\bf u}^{\ep}$ and the pressure $p^\ep$ in the limit $\ep \rightarrow 0$. Since the problem is defined on the domain $\Omega^\ep=\omega \times (-\ep,\ep)$, which is changing with $\ep$, it is convenient to transform it to a problem defined on a fixed $\Omega=\omega \times (-1,1)$.

To the unknowns ${\bf u}^\ep$ and $p^\ep$, defined on  $\Omega^\ep$, we associate the scaled displacement and pressure fields ${\bf w}(\ep)$ and $\pi (\ep)$ by the scalings $y_j=x_j$ for $j=1,2$ and $y_3=\frac{x_3}{\ep}$ with $x\in \Omega^\ep$, $y\in \Omega$ such that
\begin{eqnarray}\label{scaling}
w_j(\ep)(y_1,y_2,y_3,t)&=&\frac{u_j^{\ep}(x_1,x_2,x_3,t)}{\ep},\quad \mbox{ for } j=1,2,\nonumber\\
w_3(\ep)(y_1,y_2,y_3,t)&=& u_3^{\ep}(x_1,x_2,x_3,t), \nonumber\\
\pi (\ep)(y_1,y_2,y_3,t)&=&\frac{p^{\ep}(x_1,x_2,x_3,t)}{\ep}.
\end{eqnarray}
By direct calculation we find out that the scaled displacement and pressure fields satisfy the following variational problem:

Let $V(\Omega)=\left\{{\bf z}\in H^1(\Omega)^3|  \quad {\bf z}|_{\Gamma}=0  \right\},$ where $\Gamma=\partial \omega \times (-1,1)$ and denote ${\bf \tilde w}= (w_1, w_2)$ and  $\tilde \varphi= (\varphi_1, \varphi_2)$.\vskip1pt

Find ${\bf w}(\ep) \in H^1(0,T,V(\Omega))$, $\pi (\ep) \in H^1(0,T; H^1(\Omega))$,  such that it holds
 \begin{eqnarray}
&&2 \int_{\Omega}\sum_{i,j=1}^{2}   e_{ij}({\bf w} (\ep)) e_{ij}({\bf \varphi}) dy + \frac{2\nu }{1-2\nu} \int_{\Omega} \mbox{ div}_{y_1,y_2}  {\bf \tilde w} (\ep) \mbox{ div}_{y_1,y_2}{\bf \tilde  \varphi}dy \nonumber\\&&-\alpha\int_{\Omega}
\pi(\ep)  \mbox{ div}_{y_1,y_2}{\bf \tilde  \varphi}dy
+\frac{1}{\ep^2}\Big\{4 \int_{\Omega}\sum_{j=1}^{2}   e_{3j}({\bf w} (\ep)) e_{3j}({\bf \varphi}) dy \nonumber\\&& - \alpha\int_{\Omega}
\pi(\ep)  \partial_{y_3}{ \varphi_3}dy+ \frac{2\nu }{1-2\nu} \int_{\Omega} \mbox{ div}_{y_1,y_2}  {\bf \tilde w} (\ep) \partial_{y_3}{ \varphi_3}dy\nonumber\\
&&+ \frac{2\nu }{1-2\nu} \int_{\Omega} \partial_{y_3}  {w_{3} (\ep) } \mbox{ div}_{y_1,y_2}{\bf \tilde  \varphi}dy\Big\}+ \frac{2(1-\nu ) }{(1-2\nu )\ep^4} \int_{\Omega} \partial_{y_3}  { w_{3} (\ep) } \partial_{y_3}{\bf  \varphi_3}dy\nonumber\\&
=&  \sum_{j=1}^{2}  \int_{\Sigma^{+}\cup\Sigma^{-}}{  \mathcal{P}_{j}}{ \varphi_j}ds+  \int_{\Sigma^{+}\cup\Sigma^{-}} {  \mathcal{P}_{3}}{ \varphi_3}ds \; \forall {\bf \varphi}\in V,  t\in (0,T),\label{ScaledvariationalT1cortab} \\
&&\gamma \int_{\Omega} \partial_t \pi(\ep) \zeta dy + \int_{\Omega} \alpha \mbox{ div}_{y_1,y_2}  {\partial_t \bf \tilde w} (\ep) \zeta dy +\ep^2 \int_{\Omega} \nabla_{y_1,y_2}  \pi(\ep)  \nabla_{y_1,y_2} \zeta dy\nonumber\\&&+\alpha  \ep^{-2}  \int_{\Omega} \partial_{y_3}  {\partial_t w_{3} (\ep) } \zeta \ dy + \int_{\Omega} \frac{\partial \pi(\ep)}{\partial y_3}\frac{\partial \zeta}{\partial y_3} \ dy=- \ep \int_{-1}^1 \int_{\partial\omega} V\zeta ds\nonumber\\
&&  -  \int_{\Sigma^{+}\cup\Sigma^{-}} \pm U^1 \zeta \  ds , \quad \forall {\zeta}\in H^1(\Omega),\label{ScaledvariationalT2cor} \\
&& \qquad \qquad \pi (\ep) |_{t=0} = 0, \quad \mbox{ in } \Omega.\label{ScaledvariationalT3}
\end{eqnarray}
\section{Convergence of the scaled displacement and the pressure as $\ep\rightarrow 0$}\label{convg6}


\begin{proposition}\label{prop1T}
The following  {\it a priori} estimates hold
\begin{eqnarray}
 \|{\bf w}(\ep)\|_{H^{1}(0,T;H^{1}(\Omega)^3)}&\leq& C,\label{apriori1T}\\
 \| \pi (\ep)\|_{H^{1}(0,T;L^{2}(\Omega))}&\leq& C,\label{apriori2T}\\
  \|e_{ij}({\bf w}(\ep))\|_{L^{2}(\Omega \times(0,T))}&\leq& C,\quad \quad1\leq i,j\leq2,\label{apriori3T}\\
  \|e_{i3}({\bf w}(\ep))\|_{L^{2}(\Omega \times(0,T))}&\leq& C\ep,\quad \quad 1\leq i \leq2,\label{apriori4T}\\
    \|e_{33}({\bf w}(\ep))\|_{L^{2}(\Omega \times(0,T))}&\leq& C\ep^2,\label{apriori5T}\\
      \|\partial_{y_3} \pi (\ep)\|_{L^{2}(\Omega \times(0,T))}&\leq& C, \label{apriori6T} \\
       \|\nabla_{y_1 , y_2} \pi (\ep)\|_{L^{2}(\Omega \times(0,T))}&\leq& \frac{C}{\ep}. \label{apriori7T}
      \end{eqnarray}
\end{proposition}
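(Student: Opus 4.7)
\medskip\noindent
\textbf{Proof plan.} The strategy is the classical poroelastic energy estimate, applied to the rescaled system. I test the scaled elasticity variational equation (\ref{ScaledvariationalT1cortab}) with $\varphi=\partial_t\mathbf{w}(\ep)$ and the scaled pressure equation (\ref{ScaledvariationalT2cor}) with $\zeta=\pi(\ep)$, then add them. The crucial observation is that the four coupling terms involving $\pi(\ep)\,\mathrm{div}_{y_1,y_2}\partial_t\tilde{\mathbf w}(\ep)$ and $\ep^{-2}\pi(\ep)\,\partial_{y_3}\partial_t w_3(\ep)$ appear with opposite signs in the two equations and therefore cancel in the sum. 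Before proceeding, it will be convenient to reorganize the $\nu/(1-2\nu)$ block on the left of (\ref{ScaledvariationalT1cortab}) using the algebraic identity
\begin{equation*}
\frac{\nu}{1-2\nu}A^2+\frac{2\nu}{(1-2\nu)\ep^2}AB+\frac{1-\nu}{(1-2\nu)\ep^4}B^2
=\frac{\nu}{1-2\nu}\Big(A+\frac{B}{\ep^2}\Big)^2+\frac{1}{\ep^4}B^2,
\end{equation*}
with $A=\mathrm{div}_{y_1,y_2}\tilde{\mathbf w}(\ep)$ and $B=\partial_{y_3}w_3(\ep)$, which shows the quadratic form in $\mathbf w(\ep)$ produced by the elasticity bilinear form is positive definite and in particular controls $\ep^{-4}\|\partial_{y_3}w_3(\ep)\|_{L^2}^2$ separately.

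\medskip\noindent
After cancellation of the coupling terms, the sum of the two tested equations becomes an energy identity of the form
\begin{equation*}
\frac{d}{dt}\mathcal{E}(\ep,t)+\int_\Omega\bigl(\ep^2|\nabla_{y_1,y_2}\pi(\ep)|^2+|\partial_{y_3}\pi(\ep)|^2\bigr)\,dy=\mathcal{R}(\ep,t),
\end{equation*}
where
\begin{equation*}
\mathcal{E}(\ep,t)=\sum_{i,j=1}^{2}\|e_{ij}(\mathbf w(\ep))\|_{L^2}^2+\frac{2}{\ep^2}\sum_{j=1}^2\|e_{3j}(\mathbf w(\ep))\|_{L^2}^2+\frac{1}{\ep^4}\|\partial_{y_3}w_3(\ep)\|_{L^2}^2+\frac{\gamma}{2}\|\pi(\ep)\|_{L^2}^2,
\end{equation*}
plus the positive square $(\nu/(1-2\nu))\|\mathrm{div}_{y_1,y_2}\tilde{\mathbf w}(\ep)+\ep^{-2}\partial_{y_3}w_3(\ep)\|_{L^2}^2$, and $\mathcal{R}(\ep,t)$ collects all the surface integrals against $\mathcal{P}_j$, $\mathcal{P}_3$, $V$ and $U^1$.

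\medskip\noindent
The next step is the absorption of $\mathcal{R}(\ep,t)$ into the left-hand side. The traction term $\sum_j\int_{\Sigma^\pm}\mathcal{P}_j\,\partial_t w_j(\ep)\,ds+\int_{\Sigma^\pm}\mathcal{P}_3\,\partial_t w_3(\ep)\,ds$ is integrated in time by parts; thanks to Assumption \ref{Hypoth1} the boundary data vanish near $t=0$, so the $t=0$ contribution drops out. The remaining surface pairing at time $t$ is estimated by a trace inequality on the fixed domain $\Omega$ together with Korn's inequality (with clamping on $\Gamma$) which yields $\|\mathbf w(\ep)(t)\|_{H^1(\Omega)}^2\le C\,\mathcal{E}(\ep,t)$ uniformly in $\ep$, because the terms $\|e_{3j}\|^2$ and $\|\partial_{y_3}w_3\|^2$ appear in $\mathcal{E}$ with the small weights $\ep^{-2}$ and $\ep^{-4}$ and thus dominate. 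The flux terms $\ep\int_{-1}^1\!\!\int_{\partial\omega}V\pi(\ep)\,ds$ and $\int_{\Sigma^\pm}(\pm U^1)\pi(\ep)\,ds$ are controlled by trace inequalities and Young's inequality: the $\ep$ prefactor on the lateral boundary pairs exactly with the $\ep^2$ in $\ep^2\|\nabla_{y_1,y_2}\pi(\ep)\|_{L^2}^2$, so $\tfrac12\delta\ep^2\|\nabla_{y_1,y_2}\pi\|^2$ may be absorbed, and the top/bottom flux is absorbed into $\|\partial_{y_3}\pi(\ep)\|_{L^2}^2$ and $\|\pi(\ep)\|_{L^2}^2$. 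A Gr\"onwall argument then gives a uniform bound on $\mathcal{E}(\ep,t)$ and on the time-integrated dissipation, from which (\ref{apriori3T})--(\ref{apriori7T}) follow by reading off the prefactors, together with the $L^2(0,T;H^1)$ part of (\ref{apriori1T}) via Korn and the $L^\infty(0,T;L^2)$ part of (\ref{apriori2T}).

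\medskip\noindent
To upgrade to the full $H^1$-in-time bounds (\ref{apriori1T}) and (\ref{apriori2T}), I formally differentiate the variational system in $t$ and repeat the same energy argument with $(\partial_t\mathbf w(\ep),\partial_t\pi(\ep))$ in place of $(\mathbf w(\ep),\pi(\ep))$ and with $(\partial_t\mathcal{P}_\ep,\partial_t V,\partial_t U^1)$ on the right-hand side. The required regularity in time of the Galerkin approximations was already obtained in the proof of Proposition \ref{epexist}, so this differentiation is justified at the discrete level and survives the limit $N\to\infty$. The initial conditions $\partial_t\mathbf w(\ep)|_{t=0}=0$ and $\partial_t\pi(\ep)|_{t=0}=0$ follow from $p_{in}=0$ together with the compact support in $(0,T]$ of $\mathcal{P}_\ep$, $V$ and $U^1$ imposed in Assumption \ref{Hypoth1}, which makes all time derivatives of the data vanish at $t=0$. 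The main technical obstacle in the whole argument is the absorption step: one must verify that Korn's inequality on the fixed domain $\Omega$ with the clamped condition on $\Gamma$ yields a constant independent of $\ep$, and that the $\ep$-weighted trace pairings with $V$ and $U^1$ are compatible with the $\ep$-weighted dissipation on the left — this is precisely what makes the estimates (\ref{apriori6T}) and (\ref{apriori7T}) tight and forces the unusual $\ep^{-1}$ blow-up in the horizontal pressure gradient.
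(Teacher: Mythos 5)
Your proof is correct and follows essentially the same route as the paper: test the elasticity equation with $\partial_t\mathbf{w}(\ep)$ and the pressure equation with $\pi(\ep)$, exploit the cancellation of the coupling terms, absorb the surface data via trace/Korn arguments using the zero initial data from Assumptions \ref{Hypoth1}, integrate in time, and then differentiate the system in $t$ and repeat to get the $H^1$-in-time bounds. Your explicit completion of the square in the $\nu/(1-2\nu)$ block (which the paper leaves implicit) is a worthwhile addition, since it is exactly what guarantees that the indefinite cross term $\frac{4\nu}{(1-2\nu)\ep^2}\int \mathrm{div}_{y_1,y_2}\tilde{\mathbf w}(\ep)\,\partial_{y_3}w_3(\ep)$ in the energy is dominated and that the $\ep^{-4}\|\partial_{y_3}w_3(\ep)\|^2$ term survives with a positive coefficient.
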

\begin{proof}
Testing equations \eqref{ScaledvariationalT1cortab} and \eqref{ScaledvariationalT2cor} by $\varphi= \partial_t {\bf w}(\ep)$ and $\zeta=\pi (\ep)$, respectively, integrating them and summing up, we obtain
\begin{eqnarray}\label{33}
&&\frac{1}{2} \frac{d}{dt} \Big\{ 2 \int_{\Omega}\sum_{i,j=1}^{2}   \left |e_{ij}({\bf w(\ep) })\right|^2 dy + \frac{2\nu }{1-2\nu} \int_{\Omega} \left | \mbox{ div}_{y_1,y_2}  {\bf \tilde w(\ep) } \right |^2 \ dy + \gamma \int_{\Omega} \left | \pi (\ep)\right|^2dy\nonumber\\
&&+\frac{4}{\ep^2}  \int_{\Omega}\sum_{j=1}^{2}  \left | e_{3j}({\bf w(\ep) })\right |^2 dy +  \frac{4\nu }{\ep^2 (1-2\nu )} \int_{\Omega} \mbox{ div}_{y_1,y_2}  {\bf \tilde w(\ep) } \partial_{y_3}{u_3(\ep)}dy\nonumber\\
&&+  \frac{2(1-\nu) }{(1-2\nu) \ep^4} \int_{\Omega}\left | \partial_{y_3}  w_3(\ep)\right|^2 dy \Big\}+\ep^2  \int_{\Omega} \left | \nabla_{y_1,y_2}  {\pi (\ep) } \right |^2dy + \int_{\Omega} \left | \frac{\partial \pi (\ep) }{\partial y_3}  \right |^2dy \nonumber\\
&& =\frac{d}{dt} \Big\{ \sum_{j=1}^{2}  \int_{\Sigma^{+}\cup\Sigma^{-}}{ \mathcal{P}_{j}}{w_j}(\ep)ds+  \int_{\Sigma^{+}\cup\Sigma^{-}} { \mathcal{P}_{3}}{ w_3(\ep)} \ ds \Big\} \nonumber
\\&& -\sum_{j=1}^{2}  \int_{\Sigma^{+}\cup\Sigma^{-}}{ \partial_t \mathcal{P}_{j}}{w_j}(\ep) ds-  \int_{\Sigma^{+}\cup\Sigma^{-}} { \partial_t \mathcal{P}_{3}}{ w_3(\ep)} \ ds    \nonumber \\
&& -  \int_{\Sigma^{+}\cup\Sigma^{-}}  \pm  U^1 \pi (\ep) \  ds - \ep \int_{-1}^1 \int_{\partial\omega}
V \pi (\ep) ds .
\end{eqnarray}
We can estimate the terms on the right hand-side of \eqref{33}:
\begin{eqnarray}
\left | \int_{\Sigma^{+}\cup\Sigma^{-}} {  \pm U^1} \pi (\ep)  ds\right| = \left |  \int_{\Omega} \frac{\partial}{\partial{y_3}} ( U^1 \pi (\ep))  ds\right| \leq C \int_{\Omega}\left | \frac{\partial}{\partial{y_3}}\pi (\ep)\right |  ds
\end{eqnarray}
\begin{gather}
\left | \int_{\Sigma^{+}\cup\Sigma^{-}} { \mathcal{P}_{3}}{ w_3(\ep)}ds\right| = \left | \int_{\Omega} \partial_{y_3} \left(w_3(\ep)\left( \frac{P_1-P_2}{2}y_3+\frac{P_1+P_2}{2}\right)\right)  dy\right| \leq \notag \\
C \int_{\Omega}(\left | \partial_{y_3}w_3(\ep)\right | + \left | w_3(\ep)\right | ) \ dy \underbrace{\leq}_{\mbox{using Korn's inequality}} C ( \int_\Omega | e(\mathbf{w} (\ep ) )|^2 \ dy )^{1/2} , \label{P3}
\end{gather}
\begin{gather}
\left |  \int_{\Sigma^{+}\cup\Sigma^{-}}{ \partial_t \mathcal{P}_{j}}{w_j}(\ep)\ ds\right| =  | \int_{\Omega} \partial_{y_3} ( \frac{\mathcal{P}_{j} |_{y_3 =1} -\mathcal{P}_{j} |_{y_3 =-1}}{2} y_3+\notag \\
\frac{\mathcal{P}_{j} |_{y_3 =1} + \mathcal{P}_{j} |_{y_3 =-1}}{2} ){w_j}(\ep)  dy | \leq  C \int_{\Omega}\left | \partial_{y_3}w_j(\ep)\right |  dy + C \int_{\Omega}\left | {w_j}(\ep) \right |  dy \leq \notag \\
C \left( \int_{\Omega}\left |\nabla  {w_j}(\ep) \right |^2  dy\right)^{1/2}
\leq C\| e({\bf w}(\ep))\|_{L^2(\Omega)} .\label{Phoriz}
\end{gather}
The initial value of ${\bf w(\ep) }$ is calculated using $p_{in}$ and we have ${\bf w(\ep) } |_{t=0} =0.$ Using equation (\ref{ScaledvariationalT2cor}) we find that $\p_t \pi (\ep ) |_{t=0} =0$, as well.
 The integration in time of \eqref{33} gives  estimates (\ref{apriori1T})-(\ref{apriori7T}), but with $L^2$-time norms.

 Next we calculate the time derivative of equations (\ref{ScaledvariationalT1cortab})-(\ref{ScaledvariationalT2cor}), test by $\varphi = \p_{tt} {\bf w(\ep) }  $ and $\zeta = \p_t \pi (\ep)$. After repeating the above calculations, we obtain estimates \eqref{apriori1T}-\eqref{apriori7T}.
\end{proof}
\begin{proposition}\label{Propvkl}
The weak limit $\{{\bf w}^*, \pi^0 \}\in H^1(\Omega\times(0,T))^3 \times H^1(0,T; L^2 (\Omega) )$, $\p_{y_3} \pi^0 \in H^1(0,T; L^2 (\Omega) )$ of the weakly converging subsequence $\{ {\bf w}(\ep), \pi (\ep)\}_{\ep>0}$ belongs to $H^1(0,T; V_{kl}(\Omega))\times  H^1(0,T; L^2 (\Omega) )$, where
\begin{eqnarray*}
&&\hskip-17pt V_{kl}(\Omega)=\{ \mathbf{v} \in  H^1(\Omega)^3, \, e_{j3}(\mathbf{v})=0 \
\mbox{ in } \ \O \mbox{ and }
\mathbf{v} =0 \ \mbox{ on } \partial\omega\times(-1,1) \} . 
\end{eqnarray*}
\end{proposition}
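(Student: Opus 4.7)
The plan is to extract weakly convergent subsequences from the uniform bounds of Proposition \ref{prop1T} and then verify each of the defining properties of $V_{kl}(\Omega)$ passes to the limit. By estimate \eqref{apriori1T}, the sequence $\{\mathbf{w}(\ep)\}$ is uniformly bounded in $H^1(0,T; H^1(\Omega)^3)$, so there exists $\mathbf{w}^{*} \in H^1(0,T; H^1(\Omega)^3)$ and a subsequence (not relabeled) such that $\mathbf{w}(\ep) \rightharpoonup \mathbf{w}^{*}$ weakly in this space; in particular $\partial_t \mathbf{w}(\ep) \rightharpoonup \partial_t \mathbf{w}^{*}$ weakly in $L^2(0,T; H^1(\Omega)^3)$. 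Analogously, \eqref{apriori2T} and \eqref{apriori6T} give a weak limit $\pi^0 \in H^1(0,T; L^2(\Omega))$ with $\partial_{y_3}\pi(\ep) \rightharpoonup \partial_{y_3}\pi^0$ in $L^2(\Omega \times (0,T))$. Applying the same procedure to the time-differentiated system (as in the last step of the proof of Proposition \ref{prop1T}) yields $\partial_{y_3}\partial_t \pi(\ep) \rightharpoonup \partial_{y_3}\partial_t \pi^0$ in $L^2$, so $\partial_{y_3}\pi^0 \in H^1(0,T; L^2(\Omega))$.

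To obtain the Kirchhoff--Love constraint $e_{j3}(\mathbf{w}^{*}) = 0$, I use the fact that the strain components $e_{j3}$ are continuous linear maps from $H^1(\Omega)^3$ into $L^2(\Omega)$, hence weakly continuous. Therefore $e_{j3}(\mathbf{w}(\ep)) \rightharpoonup e_{j3}(\mathbf{w}^{*})$ weakly in $L^2(\Omega \times (0,T))$. On the other hand, estimates \eqref{apriori4T}--\eqref{apriori5T} give
\begin{equation*}
\|e_{j3}(\mathbf{w}(\ep))\|_{L^2(\Omega \times (0,T))} \leq C\ep \quad (j=1,2), \qquad \|e_{33}(\mathbf{w}(\ep))\|_{L^2(\Omega \times (0,T))} \leq C\ep^2,
\end{equation*}
so these quantities converge strongly to zero in $L^2$. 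By uniqueness of weak limits, $e_{j3}(\mathbf{w}^{*}) = 0$ in $\Omega \times (0,T)$ for $j=1,2,3$.

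For the boundary condition, the trace operator $\gamma_0 : H^1(\Omega)^3 \to L^2(\Gamma)^3$ is linear and continuous, hence weakly continuous. Since $\mathbf{w}(\ep)|_\Gamma = 0$ for every $\ep$ (in view of \eqref{FricBC1} and the scaling \eqref{scaling}), we get $\mathbf{w}^{*}|_\Gamma = 0$, so $\mathbf{w}^{*}(\cdot,t) \in V_{kl}(\Omega)$ for a.e.\ $t \in (0,T)$. Combined with the $H^1(0,T)$ regularity obtained above, this yields $\mathbf{w}^{*} \in H^1(0,T; V_{kl}(\Omega))$, completing the claim. There is no real obstacle here: everything reduces to standard weak-compactness and weak-continuity arguments once the a priori estimates of Proposition \ref{prop1T} are in hand; the mildly nontrivial point is simply observing that the transverse-shear smallness \eqref{apriori4T}--\eqref{apriori5T} translates directly into the Kirchhoff--Love constraint on the limit.
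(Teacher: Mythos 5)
Your proposal is correct and follows essentially the same route as the paper: extract weak limits from the a priori bounds of Proposition \ref{prop1T}, then use the $O(\ep)$ and $O(\ep^2)$ bounds on $e_{j3}(\mathbf{w}(\ep))$ and $e_{33}(\mathbf{w}(\ep))$ to kill the transverse shear and compression strains in the limit. The only cosmetic difference is that you invoke uniqueness of weak limits (weak limit equals the strong limit $0$) where the paper uses weak lower semicontinuity of the norm; these are interchangeable here.
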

\begin{proof}
Using weak compactness, we find out that there exists a subsequence, denoted by the same subscript $\ep$, and elements ${\bf w}^*\in H^1(\Omega\times(0,T))^3$ and $\pi^0\in H^1(\Omega\times(0,T))$ such that
\begin{eqnarray}
&&{\bf w}(\ep) \rightharpoonup {\bf w}^* \mbox{ weakly in }H^1(\Omega\times(0,T))^3 \mbox{ as } \ep\rightarrow 0, \\
&&\pi (\ep) \rightharpoonup \pi^0 \mbox{ weakly in }H^1(0,T; L^2 (\Omega) ) \mbox{ as } \ep\rightarrow 0, \\
&& \p_{y_3} \pi (\ep) \rightharpoonup \p_{y_3} \pi^0 \mbox{ weakly in }H^1(0,T; L^2 (\Omega) ) \mbox{ as } \ep\rightarrow 0, \\
&&e({\bf w}(\ep)) \rightharpoonup e({\bf w}^*) \mbox{ weakly in }L^2_s(\Omega\times(0,T)) \mbox{ as } \ep\rightarrow 0.
\end{eqnarray}
Using \eqref{apriori4T}-\eqref{apriori5T} we obtain
\begin{eqnarray}
\|e_{j3}({\bf w}(\ep))  \|_{L^2(\Omega\times(0,T)) }&\leq& C\ep, \quad j=1,2,\\
\|e_{33}({\bf w}(\ep))  \|_{L^2(\Omega\times(0,T)) }&\leq& C\ep^2.
\end{eqnarray}
Hence, by weak lower semicontinuity of norm
\begin{eqnarray}
\|e_{j3}({\bf w}^*)  \|_{L^2(\Omega\times(0,T)) }&\leq& \liminf_{\ep\rightarrow 0} \|e_{j3}({\bf w}(\ep))  \|_{L^2(\Omega\times(0,T)) } =0.
\end{eqnarray}
Consequently, ${\bf w}^*\in H^1(0,T;  V_{kl}(\Omega))$.
\end{proof}
\begin{lemma}\label{lemma3} 
The space $V_{kl}$ defined in Proposition \ref{Propvkl} is characterized by the following properties
\begin{equation*}
V_{kl}(\Omega)=\{\mathbf{v} \in  H^1(\Omega)^3, \; v_j=g_j-y_3 \frac{\partial g_3}{\partial y_j}, \; g_j\in H^1_0(\omega), \; \; j=1,2;  \;
 v_3=g_3 \in H_0^2 (\o) \} .
\end{equation*}
\end{lemma}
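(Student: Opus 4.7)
The plan is to prove the two inclusions separately, with the nontrivial one being the characterization of an arbitrary $\mathbf{v}\in V_{kl}(\Omega)$ via the ``Kirchhoff ansatz''.

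For the easy inclusion $(\supset)$, I would take any $\mathbf{v}$ of the stated form with $g_3\in H^2_0(\omega)$ and $g_j\in H^1_0(\omega)$, $j=1,2$, and simply verify (i) that $v_3=g_3$ and $v_j=g_j-y_3\,\partial_{y_j} g_3$ belong to $H^1(\Omega)$, (ii) that $e_{33}(\mathbf{v})=\partial_{y_3} v_3=0$ and $2e_{j3}(\mathbf{v})=\partial_{y_3}v_j+\partial_{y_j}v_3=-\partial_{y_j}g_3+\partial_{y_j}g_3=0$, and (iii) that $\mathbf{v}=0$ on $\partial\omega\times(-1,1)$ follows from $g_3|_{\partial\omega}=|\nabla g_3|_{\partial\omega}=0$ and $g_j|_{\partial\omega}=0$.

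For the nontrivial direction $(\subset)$, I would start from $\mathbf{v}\in V_{kl}(\Omega)$ and exploit the three constraints $e_{33}(\mathbf{v})=0$, $e_{13}(\mathbf{v})=0$, $e_{23}(\mathbf{v})=0$ in the sense of distributions. First, $\partial_{y_3}v_3=0$ in $\mathcal{D}'(\Omega)$ combined with $v_3\in H^1(\Omega)$ yields $v_3(y_1,y_2,y_3)=g_3(y_1,y_2)$ for some $g_3$, and by Fubini plus the trace theorem one obtains $g_3\in H^1(\omega)$. Next, $\partial_{y_3}v_j=-\partial_{y_j}v_3=-\partial_{y_j}g_3$ is independent of $y_3$; integrating in $y_3$ gives $v_j(y)=g_j(y_1,y_2)-y_3\,\partial_{y_j}g_3(y_1,y_2)$ for some $g_j$, and since $v_j\in H^1(\Omega)$ while $y_3\,\partial_{y_j}g_3$ depends linearly on $y_3$, identification of the $y_3$-polynomial coefficients as functions on $\omega$ forces $g_j\in H^1(\omega)$ and $\partial_{y_j}g_3\in H^1(\omega)$, hence $g_3\in H^2(\omega)$.

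The subtle point, and really the only thing that requires care, is upgrading the boundary condition $\mathbf{v}|_{\partial\omega\times(-1,1)}=0$ to $g_j\in H^1_0(\omega)$ and $g_3\in H^2_0(\omega)$. I would argue as follows: the trace equation $v_j|_{\partial\omega}=g_j(y_1,y_2)-y_3\,\partial_{y_j}g_3(y_1,y_2)=0$ must hold for a.e.\ $y_3\in(-1,1)$; testing this affine-in-$y_3$ relation at two distinct values of $y_3$ (or equivalently, matching the constant and $y_3$-coefficient) gives $g_j|_{\partial\omega}=0$ and $\partial_{y_j}g_3|_{\partial\omega}=0$ for $j=1,2$. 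Together with $g_3|_{\partial\omega}=0$ (from $v_3|_{\partial\omega}=0$), this says that both $g_3$ and its full gradient vanish on $\partial\omega$, i.e.\ $g_3\in H^2_0(\omega)$, while $g_j\in H^1_0(\omega)$. Collecting these statements yields the claimed characterization.

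The main obstacle, as indicated above, is simply the rigorous handling of the traces: one must be careful that a relation of the form $A(y_1,y_2)+y_3 B(y_1,y_2)=0$ holding for a.e.\ $y_3\in(-1,1)$ indeed forces $A=B=0$ as distributions on $\partial\omega$, and that the derivatives $\partial_{y_j}g_3$ so obtained admit the expected trace. Everything else is straightforward distributional calculus.
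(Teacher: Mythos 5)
Your proof is correct. The paper itself gives no argument for this lemma --- it simply cites Ciarlet (1990), p.~23 --- and what you have written is precisely the classical proof found there: use $e_{33}=0$ to make $v_3$ independent of $y_3$, use $e_{j3}=0$ to get the affine-in-$y_3$ form of $v_j$, identify the coefficients to obtain the regularity $g_j\in H^1(\omega)$, $g_3\in H^2(\omega)$, and read off the boundary conditions from the lateral trace. For the one step you flag as delicate, the cleanest formulation is to observe that $g_j=\tfrac12\int_{-1}^{1}v_j\,dy_3$ and $\partial_{y_j}g_3=-\tfrac32\int_{-1}^{1}y_3\,v_j\,dy_3$; since the trace operator onto $\partial\omega$ commutes with these $y_3$-averages, the vanishing of $v_j$ on $\partial\omega\times(-1,1)$ immediately yields $g_j|_{\partial\omega}=0$ and $\nabla g_3|_{\partial\omega}=0$, hence $g_j\in H^1_0(\omega)$ and $g_3\in H^2_0(\omega)$.
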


\begin{proof}
The proof of this Lemma is given by Ciarlet \cite{Ciarlet90}[p.23].
\end{proof}

\begin{corollary}\label{coro1}  There is $\mathbf{w}^0 \in H^1 (0,T; H^1_0 (\o )^3)$,   $ w_3^0 \in H^1(0,T; H^2_0 (\o))$, such that in   ${   \Omega }\times (0,T)$
\begin{equation}\label{charac1}
    w_j^*=w_j^0(y_1,y_2,t)-y_3 \partial_{y_j} w_3^0(y_1,y_2,t), \;\; j=1,2; \;\; w_3^*=w_3^0(y_1,y_2,t).
\end{equation}
\end{corollary}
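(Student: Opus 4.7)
The plan is to read off the statement directly from Lemma \ref{lemma3} combined with Proposition \ref{Propvkl}, with the only real work being the transfer of the time regularity.

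By Proposition \ref{Propvkl}, $\mathbf{w}^* \in H^1(0,T;V_{kl}(\Omega))$. First I would apply Lemma \ref{lemma3} for almost every $t \in (0,T)$: this produces, at each such $t$, functions $g_j(\cdot,t) \in H^1_0(\omega)$ for $j=1,2$ and $g_3(\cdot,t) \in H^2_0(\omega)$ satisfying
\begin{equation*}
w_j^*(y_1,y_2,y_3,t) = g_j(y_1,y_2,t) - y_3\, \partial_{y_j} g_3(y_1,y_2,t), \quad j=1,2,\qquad w_3^*(y_1,y_2,y_3,t)=g_3(y_1,y_2,t).
\end{equation*}
I then set $w_j^0 := g_j$ for $j=1,2,3$. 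The boundary conditions $w_j^0|_{\partial\omega}=0$ and $\nabla_{y_1,y_2} w_3^0|_{\partial\omega}=0$ follow from the Dirichlet condition $\mathbf{w}^*|_{\partial\omega\times(-1,1)}=0$, since a linear function of $y_3$ that vanishes for every $y_3\in(-1,1)$ must have both coefficients equal to zero.

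The main point is to make the $t$-dependence of the $g_j$'s genuinely $H^1$ in time. Here I would use the uniqueness of the Kirchhoff--Love decomposition to write $w_j^0$ as an explicit continuous linear functional of $\mathbf{w}^*$. Indeed, since $w_3^*$ is independent of $y_3$ and $y_3$ is odd on $(-1,1)$,
\begin{equation*}
w_3^0(y_1,y_2,t) = \tfrac{1}{2}\int_{-1}^{1} w_3^*(y_1,y_2,y_3,t)\,dy_3, \qquad w_j^0(y_1,y_2,t) = \tfrac{1}{2}\int_{-1}^{1} w_j^*(y_1,y_2,y_3,t)\,dy_3,\; j=1,2.
\end{equation*}
Averaging in $y_3$ is a bounded linear map $H^1(\Omega)\to H^1(\omega)$, so since $\mathbf{w}^*\in H^1(0,T;H^1(\Omega)^3)$, the functions $w_j^0$ inherit $H^1(0,T;H^1_0(\omega))$ regularity for $j=1,2,3$.

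The only subtle step is the $H^2_0(\omega)$ regularity of $w_3^0$ in $(y_1,y_2)$, which is not visible from the formula above alone. I would obtain it as follows: from $w_j^* = w_j^0 - y_3\partial_{y_j}w_3^0$ and $\mathbf{w}^*\in H^1(\Omega)$, the weak derivatives $\partial_{y_i}\partial_{y_j}w_3^0$ are controlled by $\partial_{y_i} w_j^*$ (up to the already-controlled term $\partial_{y_i} w_j^0$), which gives $w_3^0\in H^1(0,T;H^2(\omega))$; combined with the traces $w_3^0|_{\partial\omega}=0$ and $\nabla w_3^0|_{\partial\omega}=0$ obtained from the boundary condition on $\mathbf{w}^*$, this yields $w_3^0\in H^1(0,T;H^2_0(\omega))$. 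I do not expect any serious obstacle: the corollary is essentially a rewriting of Proposition \ref{Propvkl} in the basis provided by Lemma \ref{lemma3}, together with a routine time-regularity transfer through the averaging operator.
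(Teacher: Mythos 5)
Your proposal is correct and follows the same route the paper (implicitly) takes: Corollary \ref{coro1} is stated there as an immediate consequence of Proposition \ref{Propvkl} and Lemma \ref{lemma3}, with no further proof given. Your extra step of realizing $w_j^0$ and $\partial_{y_j}w_3^0$ as explicit $y_3$-averages (bounded linear functionals of $\mathbf{w}^*$) is the right way to make the transfer of the $H^1(0,T;\cdot)$ regularity, and the measurability in $t$, rigorous.
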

\begin{proposition}\label{propE0*tabcorr}
Let $< \pi^0 > = \frac{1}{2} \int^{1}_{-1} \pi^0 \ dy_3 $.
Let $$E_{cor}=E_{cor} (y_1,y_2,t)= \frac{\alpha  (1-2\nu ) < \pi^0 >- 2\nu  \mbox{div }_{y_1,y_2}(w_{1}^0,w_{2}^0)}{2(1-\nu )}.$$ Then,
\begin{gather}
\frac{1}{\ep^2}e_{33}({\bf w} (\ep))=\frac{\partial w_{3}(\ep)}{\partial {y_3}}\frac{1}{\ep^2}\rightharpoonup \frac{\alpha (1-2\nu ) \pi^0-2\nu \mbox{ div }_{y_1,y_2}{\bf \tilde w^*} }{2(1-\nu )}=E_{cor}^*=\notag \\
E_{cor}+\frac{\nu y_3}{1-\nu} \Delta_{y_1,y_2} w_{3}^0 + \frac{\alpha  (1-2\nu ) }{2(1-\nu )} (\pi^0 -< \pi^0 >) , \label{Comptab0cor}
\end{gather}
weakly in $L^2(\Omega\times (0,T))$, as $\ep\rightarrow 0$.
\end{proposition}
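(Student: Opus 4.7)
The bound \eqref{apriori5T} shows that $\ep^{-2}\partial_{y_3}w_3(\ep) = \ep^{-2}e_{33}(\mathbf{w}(\ep))$ is uniformly bounded in $L^2(\Omega\times(0,T))$, so, extracting a further subsequence from the one of Proposition \ref{Propvkl}, it converges weakly to some $E^*_{cor}\in L^2(\Omega\times(0,T))$. The plan is to identify this weak limit by testing the scaled momentum balance \eqref{ScaledvariationalT1cortab} against a purely vertical probe $\varphi = (0,0,\psi)$, where $\psi\in H^1(\Omega)$ vanishes on the lateral boundary $\Gamma = \partial\omega\times(-1,1)$ but is unconstrained on $\Sigma^\pm$, and then multiplying the resulting identity by $\ep^2$ so that the singular $\ep^{-4}$ term is rescaled to order one.

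With this choice of $\varphi$, the purely in-plane elastic and pressure contributions on the first line of \eqref{ScaledvariationalT1cortab} and the $\partial_{y_3}w_3\,\mbox{div}_{y_1,y_2}\tilde{\varphi}$ term vanish identically. After multiplication by $\ep^2$, the shear-coupling term $2\int\sum_{j=1}^{2} e_{3j}(\mathbf{w}(\ep))\partial_{y_j}\psi\,dy$ is $O(\ep)$ by \eqref{apriori4T}, and the rescaled right-hand side $\ep^2\int_{\Sigma^{+}\cup\Sigma^{-}}\mathcal{P}_3\psi\,ds$ is $O(\ep^2)$; both vanish as $\ep\to 0$. The three remaining terms pass to the limit via $\pi(\ep)\rightharpoonup\pi^0$, $\mbox{div}_{y_1,y_2}\tilde{\mathbf{w}}(\ep)\rightharpoonup\mbox{div}_{y_1,y_2}\tilde{\mathbf{w}}^*$, and $\ep^{-2}\partial_{y_3}w_3(\ep)\rightharpoonup E^*_{cor}$, producing
\begin{equation*}
\int_\Omega\Bigl(-\alpha\pi^0 + \frac{2\nu}{1-2\nu}\mbox{div}_{y_1,y_2}\tilde{\mathbf{w}}^* + \frac{2(1-\nu)}{1-2\nu}E^*_{cor}\Bigr)\partial_{y_3}\psi\,dy = 0
\end{equation*}
for every admissible $\psi$, at almost every $t\in(0,T)$.

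Choosing separable probes $\psi(y_1,y_2,y_3) = \chi(y_1,y_2)f(y_3)$ with $\chi\in C^\infty_c(\omega)$ and $f\in H^1(-1,1)$ arbitrary, together with the fact that $\{f':f\in H^1(-1,1)\} = L^2(-1,1)$, forces the bracketed integrand to vanish almost everywhere. Solving for $E^*_{cor}$ yields the first expression in \eqref{Comptab0cor}; the decomposed form follows on substituting the Kirchhoff-Love structure $\tilde{\mathbf{w}}^* = (w_1^0,w_2^0) - y_3\nabla_{y_1,y_2}w_3^0$ from Corollary \ref{coro1} into $\mbox{div}_{y_1,y_2}\tilde{\mathbf{w}}^*$ and splitting $\pi^0 = \langle\pi^0\rangle + (\pi^0-\langle\pi^0\rangle)$ so that the $y_3$-independent piece is precisely $E_{cor}$.

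The main technical point is the correct balancing of scales: multiplying by $\ep^2$ is exactly what is needed to cancel two of the four inverse powers of $\ep$ in the critical transverse compressibility term, while simultaneously sending the quadratically-singular shear coupling and the transverse loading to zero. A minor but essential detail is that the vertical probes are left free at $y_3=\pm 1$, so no spurious boundary contributions appear when passing to the limit and the separation-of-variables argument at the end has the full space $L^2(-1,1)$ at its disposal.
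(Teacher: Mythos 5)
Your proposal is correct and follows essentially the same route as the paper: take a purely vertical test function vanishing on the lateral boundary, multiply the scaled momentum equation by $\ep^2$ so that the transverse compressibility term becomes order one while the shear coupling and surface loading vanish, pass to the weak limit, and identify $E^*_{cor}$ from the resulting orthogonality relation. Your explicit remarks that the probe must be free at $y_3=\pm1$ (so the integrand itself, not merely its $y_3$-derivative, is forced to vanish) and the final substitution of the Kirchhoff--Love form of $\tilde{\mathbf{w}}^*$ are exactly the steps the paper leaves as ``straightforward.''
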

\begin{proof} After setting $\varphi_j =0$ in equation (\ref{ScaledvariationalT1cortab}) and multiplying by $\ep^2$, we get
 \begin{eqnarray}
&& 2 \int_{\Omega}\sum_{j=1}^{2}   e_{3j}({\bf w} (\ep)) \p_{y_3} \varphi_3 \ dy - \alpha\int_{\Omega}
\pi(\ep)  \partial_{y_3}{ \varphi_3}dy+ \nonumber\\  && \frac{2\nu }{1-2\nu} \int_{\Omega} \mbox{ div}_{y_1,y_2}  {\bf \tilde w} (\ep) \partial_{y_3} \varphi_3 \ dy
+ \frac{2(1-\nu ) }{(1-2\nu )\ep^2} \int_{\Omega} \partial_{y_3}  { w_{3} (\ep) } \partial_{y_3}{  \varphi_3}dy \nonumber\\&
=&
 \ \ep^2 \int_{\Sigma^{+}\cup\Sigma^{-}} {  \mathcal{P}_{3}}{ \varphi_3}ds. \nonumber
\end{eqnarray}
Passing to the limit $\ep \to 0$ yields
\begin{gather*}
    \int^T_0 \int_\Omega \bigg( \frac{2(1-\nu )}{1-2\nu}  E_{cor}^* + \frac{2\nu }{1-2\nu} div_{y_1,y_2}(w_{1}^*,w_{2}^*) - \alpha \pi^0 \bigg) \frac{\p \varphi_3}{\p y_3} \ dy =0
\end{gather*}
for all $\varphi_3  \in C(0,T ; H^1 (\Omega))$, $ \varphi_3 |_{\o}  =0$. Now it is straightforward to conclude (\ref{Comptab0cor}).
\end{proof}

\begin{proposition}\label{propej30} 
It holds
\begin{equation}\label{Shearcon}
    \frac{e_{j3} ( \mathbf{w} (\ep) ) }{\ep} \rightharpoonup 0, \quad \mbox{ weakly in } \; L^2(\Omega\times (0,T)), \quad \mbox{as } \; \ep\rightarrow 0, \ j=1,2.
\end{equation}
\end{proposition}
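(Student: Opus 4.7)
The plan is to exploit the a priori estimate (\ref{apriori4T}), which guarantees that the family $\{e_{j3}(\mathbf{w}(\ep))/\ep\}_{\ep>0}$ is uniformly bounded in $L^2(\Omega\times(0,T))$. By weak compactness, along a subsequence it converges weakly to some $\xi_j\in L^2(\Omega\times(0,T))$, and it is enough to show $\xi_j=0$: uniqueness of the limit then upgrades the subsequential convergence to convergence of the full family.

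For $j=1$ (the case $j=2$ being completely symmetric), I would test the variational equation (\ref{ScaledvariationalT1cortab}) with a separable test function $\varphi = \psi(y_1,y_2)\eta(y_3)\mathbf{e}^1$, where $\psi \in H^1_0(\omega)$ and $\eta \in H^1(-1,1)$. Because $\varphi_3 \equiv 0$, the entire $\ep^{-4}$ contribution and the three $\ep^{-2}$ terms carrying a $\p_{y_3}\varphi_3$ factor all drop out. What survives from the singular block is
\begin{equation*}
\frac{2}{\ep^{2}}\int_{\Omega} e_{31}(\mathbf{w}(\ep))\,\psi\,\eta'\; dy \; + \; \frac{2\nu}{(1-2\nu)\ep^{2}}\int_{\Omega} \p_{y_3} w_3(\ep)\,\p_{y_1}\psi\,\eta\; dy,
\end{equation*}
while every remaining term on the left and on the right of (\ref{ScaledvariationalT1cortab}) is, thanks to Proposition \ref{prop1T}, bounded in $L^2(0,T)$ uniformly in $\ep$. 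The refined bound (\ref{apriori5T}), namely $\|\p_{y_3} w_3(\ep)\|_{L^2(\Omega\times(0,T))}\leq C\ep^2$, shows that the second singular term above is actually $O(1)$ in $L^2(0,T)$. Hence the first is $O(1)$ as well, which rewrites as
\begin{equation*}
\int_{\Omega} e_{31}(\mathbf{w}(\ep))\,\psi\,\eta'\; dy = O(\ep^{2}) \quad \mbox{ in } L^2(0,T).
\end{equation*}

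Multiplying by an arbitrary $\chi\in L^2(0,T)$, integrating in time and dividing by $\ep$, I obtain
\begin{equation*}
\int_{0}^{T}\!\!\int_{\Omega} \frac{e_{31}(\mathbf{w}(\ep))}{\ep}\;\chi(t)\,\psi(y_1,y_2)\,\eta'(y_3)\; dy\, dt = O(\ep) \longrightarrow 0.
\end{equation*}
As $\eta$ ranges over $H^1(-1,1)$ its derivative $\eta'$ ranges over all of $L^2(-1,1)$; combined with the density of $H^1_0(\omega)$ in $L^2(\omega)$ and of $C^1([0,T])$ in $L^2(0,T)$, the linear span of the test functions $\chi\psi\eta'$ is dense in $L^2(\Omega\times(0,T))$. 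Together with the uniform bound on $e_{j3}(\mathbf{w}(\ep))/\ep$ in $L^2(\Omega\times(0,T))$, a standard density argument then identifies every weak cluster point $\xi_1$ with zero, yielding (\ref{Shearcon}) for $j=1$; the case $j=2$ follows identically by testing with $\varphi=\psi\eta\mathbf{e}^2$.

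The main delicate point I anticipate is the careful bookkeeping of which contributions to the singular $\ep^{-2}$ block survive for the chosen test function: the success of the argument rests on the fact that $\varphi_3\equiv 0$ simultaneously annihilates the most singular $\ep^{-4}$ term and the pressure/bulk-compression couplings at order $\ep^{-2}$, while the residual cross term involving $\p_{y_3}w_3(\ep)$ is tamed only by the nontrivial improved scaling (\ref{apriori5T}), not by the naive $O(\ep)$ bound that equipartition of elastic energy would suggest.
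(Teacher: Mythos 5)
Your argument is correct and follows essentially the same route as the paper: you test (\ref{ScaledvariationalT1cortab}) with $\varphi_3\equiv 0$, observe that this simultaneously removes the $\ep^{-4}$ term and the two $\ep^{-2}$ couplings through $\partial_{y_3}\varphi_3$, control the residual singular cross term $\ep^{-2}\int\partial_{y_3}w_3(\ep)\,\mathrm{div}_{y_1,y_2}\tilde\varphi$ by the refined bound (\ref{apriori5T}), and then identify the weak cluster point of $e_{j3}(\mathbf{w}(\ep))/\ep$ as zero by a density argument on $\partial_{y_3}\varphi_j$. The paper compresses the same ideas by multiplying the equation through by $\ep$ rather than by $\ep^2$, and works with a general $\varphi$ satisfying $\varphi_3=0$ instead of your separable $\psi(y_1,y_2)\eta(y_3)\mathbf{e}^j$, but these are only cosmetic differences.
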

\begin{proof}
Setting $\varphi_3=0$ in \eqref{ScaledvariationalT1cortab} and multiplying by ${\ep}$ yield
\begin{eqnarray*}
&&\frac{2}{\ep}\int_0^T\int_{\Omega}\sum_{j=1}^2 e_{3j}({\bf w}(\ep))\partial_{y_3}\varphi_j+\frac{2\nu }{(1-2\nu)\ep} \int_0^T\int_{\Omega}\partial_{y_3} w_3(\ep) \mbox{div}_{y_1,y_2}\tilde{\bf \varphi} =O(\ep).
\end{eqnarray*}
In the limit $\ep \rightarrow 0$ we obtain
\begin{equation}
    \frac{e_{j3} ( \mathbf{w} (\ep) ) }{\ep} \rightharpoonup \chi_j, \quad \mbox{ weakly in } \; L^2(\Omega\times (0,T))
\end{equation}
and
$\int_{\Omega}\chi_j \partial_{y_3} \varphi_j =0$, $\forall \varphi_j\in H^1(\Omega),$ $\varphi_j|_{\partial \omega \times (-1,1)}=0.$
 It finishes the proof of Proposition \ref{propej30}.
\end{proof}

\begin{proposition}\label{strechlim1}
$(w_{1}^0 , w_{2}^0 ) $ satisfies  system (\ref{StrechJ01Tcor})-(\ref{StrechJ04Tcor}).
\end{proposition}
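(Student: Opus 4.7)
The plan is to derive the two equations separately by passing to the limit $\ep\to 0$ in the variational formulations (\ref{ScaledvariationalT1cortab}) and (\ref{ScaledvariationalT2cor}) with test functions that do not depend on $y_3$.

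For (\ref{StrechJ04Tcor}), I would take $\varphi = (\varphi_1(y_1,y_2),\varphi_2(y_1,y_2),0)$ with $\varphi_j \in H^1_0(\omega)$ in (\ref{ScaledvariationalT1cortab}). Since $\partial_{y_3}\varphi_j = 0$ and $\varphi_3 = 0$, every term in the braces with prefactor $1/\ep^2$ vanishes except the single cross term $\frac{2\nu}{(1-2\nu)\ep^2}\int_\Omega \partial_{y_3}w_3(\ep)\,\mbox{div}_{y_1,y_2}\tilde\varphi\,dy$, and the $1/\ep^4$ term also drops out. The weak convergences from Proposition \ref{prop1T} together with the identification $\ep^{-2}\partial_{y_3}w_3(\ep)\rightharpoonup E_{cor}^*$ from Proposition \ref{propE0*tabcorr} then allow passage to the limit. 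Next, using the Kirchhoff--Love structure $w_j^* = w_j^0 - y_3 \partial_{y_j}w_3^0$ from Corollary \ref{coro1}, I would integrate the limit equation over $y_3 \in (-1,1)$; all $y_3$-odd contributions disappear (the bending pieces $y_3\,\partial_i\partial_j w_3^0$ in $e_{ij}({\bf w}^*)$, $-y_3 \Delta w_3^0$ in $\mbox{div}_{y_1,y_2}\tilde w^*$, and the $y_3$-part of $E_{cor}^*$), as does the mean-zero fluctuation $\pi^0 - \langle\pi^0\rangle$ in $E_{cor}^*$. Setting $\pi_m := \langle\pi^0\rangle$ and substituting the explicit formula for $E_{cor}$, the divergence coefficients collapse to $\frac{2\nu}{1-\nu}$ and the pressure coefficient to $-\frac{\alpha(1-2\nu)}{1-\nu}$; integration by parts then yields the weak form of (\ref{StrechJ04Tcor}).

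For the algebraic relation (\ref{StrechJ01Tcor}), I would test (\ref{ScaledvariationalT2cor}) with $\zeta = \zeta(y_1, y_2)$. The $\ep^2$-gradient term and the $\partial_{y_3}\pi(\ep)\partial_{y_3}\zeta$-term disappear immediately, the $\ep V$-term vanishes in the limit, and the $\pm U^1$ boundary fluxes on $\Sigma^{\pm}$ cancel pairwise because $\zeta$ takes the same value on the top and the bottom face. Passing to the limit in the $\alpha \ep^{-2} \partial_t\partial_{y_3}w_3(\ep)$ term via Proposition \ref{propE0*tabcorr} and integrating over $y_3$ gives $\gamma\,\partial_t \pi_m + \alpha\,\mbox{div}_{y_1,y_2}\partial_t(w_1^0,w_2^0) + \alpha\,\partial_t E_{cor} = 0$; integrating in $t$ with the zero initial data and substituting the formula for $E_{cor}$ produces precisely (\ref{StrechJ01Tcor}).

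The main point requiring care is the identification of the singular weak limit of $\ep^{-2}\partial_{y_3}w_3(\ep)$, which is exactly the content of Proposition \ref{propE0*tabcorr}. Once that identification is available, the remainder is straightforward coefficient bookkeeping together with the parity-in-$y_3$ cancellations described above.
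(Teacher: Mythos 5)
Your proposal is correct and follows essentially the same route as the paper: test \eqref{ScaledvariationalT1cortab} with Kirchhoff--Love (here $y_3$-independent in-plane) test functions so that only the cross term $\frac{2\nu}{(1-2\nu)\ep^2}\int_\Omega \partial_{y_3}w_3(\ep)\,\mbox{div}_{y_1,y_2}\tilde\varphi\,dy$ survives among the singular terms, identify its limit via Proposition \ref{propE0*tabcorr}, and then test \eqref{ScaledvariationalT2cor} with $\zeta=\zeta(y_1,y_2,t)$ and integrate in $y_3$ and $t$. The only cosmetic difference is that the paper first records the limit identity \eqref{LimitVKL02Dcor} for all of $V_{kl}$ before specializing to $\varphi=(g_1,g_2,0)$, whereas you specialize from the outset.
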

\begin{proof}
We test equation (\ref{ScaledvariationalT1cortab}) by $\varphi \in V_{KL} (\Omega) $. Since $e_{j3} (\varphi ) =0$, we have
\begin{gather} 2 \int_{\Omega}\sum_{i,j=1}^{2}   e_{ij}({\bf w} (\ep)) e_{ij}({\bf \varphi}) dy + \frac{2\nu }{1-2\nu} \int_{\Omega} \mbox{ div}_{y_1,y_2}  {\bf \tilde w} (\ep) \mbox{ div}_{y_1,y_2}{\bf \tilde  \varphi} \ dy \notag \\
-\alpha\int_{\Omega}
\pi(\ep)  \mbox{ div}_{y_1,y_2}{\bf \tilde  \varphi} \ dy + \frac{2\nu }{(1-2\nu )\ep^2} \int_{\Omega} \partial_{y_3}  {w_{3} (\ep) } \mbox{ div}_{y_1,y_2}{\bf \tilde  \varphi} \ dy \notag \\
=  \sum_{j=1}^{2}  \int_{\Sigma^{+}\cup\Sigma^{-}}{  \mathcal{P}_{j}}{ \varphi_j} \ ds { + \int_{\Sigma^{+}\cup\Sigma^{-}}{  \mathcal{P}_{3}}{ \varphi_3} \ ds .} \label{LimitVKL1cor}
\end{gather}
We use Proposition \ref{propE0*tabcorr} and pass to the limit $\ep \to 0$ in equation (\ref{LimitVKL1cor}). It yields

\begin{gather}
    \int_{\Omega} \big( 2 \sum_{i,j=1}^{2}   e_{ij}({\bf w}^*) e_{ij}({\bf \varphi}) -\frac{1-2\nu}{1-\nu} (\alpha  \pi^0  -\notag \\ \frac{2\nu}{1-2\nu}\mbox{ div}_{y_1,y_2}  {\bf \tilde w}^* ) \mbox{ div}_{y_1,y_2}{\bf \tilde  \varphi} \big) dy =\sum_{j=1}^{2}  \int_{\Sigma^{+}\cup\Sigma^{-}}{  \mathcal{P}_{j}}{ \varphi_j} \ ds { + \int_{\Sigma^{+}\cup\Sigma^{-}}{  \mathcal{P}_{3}}{ \varphi_3} \ ds } \label{LimitVKL02Dcor}
\end{gather}
and choice $\varphi = (g_1 , g_2 , 0)$, $g_j \in H^1_0 (\o)$, gives equation (\ref{StrechJ04Tcor}).\vskip1pt

Finally, we take $\zeta =\zeta (y_1 , y_2 ,t)$ as a test function in (\ref{ScaledvariationalT2cor}). Using Proposition
\ref{propE0*tabcorr} and zero initial data, we obtain the equality (\ref{StrechJ01Tcor}).
\end{proof}

\begin{proposition}
The pressure equation reads
\begin{gather}
    \p_t \big\{ (\gamma + \alpha^2 \frac{1-2\nu}{2(1-\nu)} ) \pi^0 + \alpha \frac{1-2\nu}{1-\nu} \mbox{div }_{y_1 , y_2 } ( w^0_{1} , w^0_{2} ) \big\} -\frac{\p^2  \pi^0}{\p y_3^2} -\notag \\
     \alpha \frac{1-2\nu}{1-\nu} y_3 \Delta _{y_1 , y_2 } \p_t  w^0_{3} =0 \quad \mbox{ in } \quad \Omega \times (0,T) ,  \label{presstab1cor} \\
    \p_{y_3}  \pi^0 |_{y_3 =1} = \p_{y_3}  \pi^0 |_{y_3 =-1} =-U^1 (y_1 , y_2 , t) \quad \mbox{ in } \quad  (0,T) ,  \label{presstab2cor} \\
    \pi^0 |_{t=0} =0 \quad \mbox{ in } \quad  \Omega .  \label{presstab3cor}
\end{gather}
\end{proposition}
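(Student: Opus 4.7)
The plan is to pass to the limit $\ep\to 0$ in the variational identity (\ref{ScaledvariationalT2cor}) using a time-independent test function $\zeta\in H^1(\Omega)$, thereby obtaining an equation valid pointwise in $t\in(0,T)$. Four of the five terms on the left-hand side have immediate limits: by the bounds (\ref{apriori1T})--(\ref{apriori2T}) and (\ref{apriori6T}), the terms $\gamma\int_\Omega \p_t\pi(\ep)\zeta\,dy$, $\alpha\int_\Omega \mbox{div}_{y_1,y_2}\p_t\tilde{\bf w}(\ep)\zeta\,dy$ and $\int_\Omega \p_{y_3}\pi(\ep)\p_{y_3}\zeta\,dy$ converge to the corresponding quantities with $\pi(\ep),\mathbf{w}(\ep)$ replaced by $\pi^0,\mathbf{w}^*$; by (\ref{apriori7T}) the term $\ep^2\int_\Omega\nabla_{y_1,y_2}\pi(\ep)\cdot\nabla_{y_1,y_2}\zeta\,dy$ is $O(\ep)$ and vanishes.

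The delicate term is $\alpha\ep^{-2}\int_\Omega\p_{y_3}\p_t w_3(\ep)\zeta\,dy$, which is singular in $\ep$. To handle it, I would use Proposition \ref{propE0*tabcorr}: since the uniform estimates of Proposition \ref{prop1T} hold in $H^1(0,T)$, the weak convergence $\ep^{-2}\p_{y_3} w_3(\ep)\rightharpoonup E_{cor}^*$ may be time-differentiated to yield $\ep^{-2}\p_{y_3}\p_t w_3(\ep)\rightharpoonup \p_t E_{cor}^*$ weakly in $L^2(\Omega\times(0,T))$. Substituting the explicit formula for $E_{cor}^*$ from (\ref{Comptab0cor}) and combining with the already-identified limit $\alpha\,\mbox{div}_{y_1,y_2}\p_t\tilde{\bf w}^*$, a direct computation gives
\begin{equation*}
\alpha\bigl(\mbox{div}_{y_1,y_2}\tilde{\bf w}^* + E_{cor}^*\bigr)=\frac{\alpha(1-2\nu)}{1-\nu}\mbox{div}_{y_1,y_2}\tilde{\bf w}^* + \frac{\alpha^2(1-2\nu)}{2(1-\nu)}\pi^0 .
\end{equation*}
Next I would invoke the Kirchhoff--Love representation of Corollary \ref{coro1}, namely $\tilde{\bf w}^*=(w_1^0-y_3\p_{y_1}w_3^0,w_2^0-y_3\p_{y_2}w_3^0)$, so that $\mbox{div}_{y_1,y_2}\tilde{\bf w}^*=\mbox{div}_{y_1,y_2}(w_1^0,w_2^0)-y_3\Delta_{y_1,y_2}w_3^0$. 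The $y_3$-term produces the bending contribution $-\alpha\tfrac{1-2\nu}{1-\nu}y_3\Delta_{y_1,y_2}\p_t w_3^0$ present in (\ref{presstab1cor}), while regrouping coefficients with $\gamma\p_t\pi^0$ yields the factor $\gamma+\alpha^2(1-2\nu)/(2(1-\nu))$.

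With these substitutions, the limit variational identity becomes, for every $\zeta\in H^1(\Omega)$,
\begin{equation*}
\int_\Omega\Bigl(\p_t\bigl\{(\gamma+\tfrac{\alpha^2(1-2\nu)}{2(1-\nu)})\pi^0+\tfrac{\alpha(1-2\nu)}{1-\nu}\mbox{div}_{y_1,y_2}(w_1^0,w_2^0)\bigr\}-\tfrac{\alpha(1-2\nu)}{1-\nu}y_3\Delta_{y_1,y_2}\p_t w_3^0\Bigr)\zeta\,dy
\end{equation*}
\begin{equation*}
+\int_\Omega\p_{y_3}\pi^0\,\p_{y_3}\zeta\,dy = -\int_{\Sigma^+\cup\Sigma^-}\pm U^1\zeta\,ds .
\end{equation*}
Restricting first to $\zeta\in C_c^\infty(\Omega)$ gives the interior PDE (\ref{presstab1cor}); then letting $\zeta$ range over all of $H^1(\Omega)$ and integrating by parts in $y_3$ identifies the boundary fluxes $\p_{y_3}\pi^0|_{y_3=\pm 1}=-U^1$, which is (\ref{presstab2cor}). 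The initial condition (\ref{presstab3cor}) follows from $\pi(\ep)|_{t=0}=0$ together with the weak convergence in $H^1(0,T;L^2(\Omega))$, since the evaluation trace at $t=0$ is continuous on that space.

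The main obstacle is the justification of the passage to the limit in the $\ep^{-2}$ term, because it is only through the degenerate elliptic structure of Proposition \ref{propE0*tabcorr} that the singular quantity $\ep^{-2}\p_{y_3}w_3(\ep)$ acquires a finite weak limit; once this and its time-differentiated analogue are in hand, the rest of the argument is algebraic manipulation plus standard trace/integration-by-parts arguments. Uniqueness for the limit problem (used implicitly) eliminates the need to extract subsequences.
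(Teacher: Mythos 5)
Your proposal is correct and follows essentially the same route as the paper: pass to the limit in \eqref{ScaledvariationalT2cor}, identify the limit of the singular term $\alpha\ep^{-2}\int_\Omega\p_{y_3}\p_t w_3(\ep)\zeta\,dy$ as $\alpha\int_\Omega\p_t E^*_{cor}\zeta\,dy$ via Proposition \ref{propE0*tabcorr} (time-differentiated thanks to the $H^1(0,T)$ a priori bounds), then substitute the explicit formula \eqref{Comptab0cor} together with the Kirchhoff--Love form of $\tilde{\bf w}^*$ and use the zero initial data. The paper states this in one line; your write-up simply fills in the algebra and the trace/integration-by-parts details, all of which check out.
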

\begin{proof} Passing to the limit $\ep \to 0$ in equation (\ref{ScaledvariationalT2cor}) yields
    \begin{eqnarray}
&&\gamma \int_{\Omega} \partial_t \pi^0 \zeta \ dy + \int_{\Omega} \alpha (\mbox{ div}_{y_1,y_2}  {\partial_t \bf \tilde w}^*  +\p_t E^*_{cor} ) \zeta dy
\nonumber\\&& + \int_{\Omega} \frac{\partial {  \pi^0 }}{\partial y_3}\frac{\partial \zeta}{\partial y_3} \ dy = -\int_{\Sigma^+ \cup \Sigma^-} \pm U^1 \zeta ds , \quad \forall {\zeta}\in H^1(\Omega).\label{ScaledvariationalPtab1cor}
\end{eqnarray}
Using Proposition
\ref{propE0*tabcorr} and zero initial data, we obtain from (\ref{ScaledvariationalPtab1cor}) system (\ref{presstab1cor})-(\ref{presstab3cor}).
\end{proof}

\begin{corollary} The function $\pi_w= \pi^0 -< \pi^0 >$ satisfies  system (\ref{presstabD1cor})-(\ref{presstabD3cor}).
\end{corollary}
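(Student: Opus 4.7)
The plan is to derive (\ref{presstabD1cor})--(\ref{presstabD3cor}) directly by substituting $\pi^0=\pi_w+\langle\pi^0\rangle$ into the previously established equations (\ref{presstab1cor})--(\ref{presstab3cor}), after identifying the average $\langle\pi^0\rangle$ with the Kirchhoff-membrane pressure $\pi_m$ from (\ref{StrechJ01Tcor}).

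First I would handle the boundary and initial data, which are immediate. Since $\langle\pi^0\rangle$ depends only on $(y_1,y_2,t)$, $\partial_{y_3}\pi_w=\partial_{y_3}\pi^0$, so condition (\ref{presstab2cor}) gives (\ref{presstabD3cor}). Similarly, $\pi_w|_{t=0}=\pi^0|_{t=0}-\langle\pi^0\rangle|_{t=0}=0$ by (\ref{presstab3cor}), and $\int_{-1}^1\pi_w\,dy_3=0$ holds by construction.

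Next I would identify $\langle\pi^0\rangle$ with $\pi_m$. Integrating (\ref{presstab1cor}) in $y_3$ over $(-1,1)$, the term $y_3\Delta_{y_1,y_2}\partial_tw^0_3$ vanishes by symmetry, and the boundary contribution
\begin{equation*}
\int_{-1}^{1}\frac{\partial^2\pi^0}{\partial y_3^2}\,dy_3 = \partial_{y_3}\pi^0\big|_{y_3=1}-\partial_{y_3}\pi^0\big|_{y_3=-1}=0
\end{equation*}
vanishes by (\ref{presstab2cor}). This leaves, after dividing by $2$,
\begin{equation*}
\partial_t\Big\{\Big(\gamma+\frac{\alpha^2(1-2\nu)}{2(1-\nu)}\Big)\langle\pi^0\rangle+\frac{\alpha(1-2\nu)}{1-\nu}\,\mathrm{div}_{y_1,y_2}(w^0_1,w^0_2)\Big\}=0.
\end{equation*}
Integrating in time, using that the compact-support assumption on $\mathcal{P}_\varepsilon$ in $\omega\times(0,T]$ forces the unique solution of the stationary system (\ref{StrechJ01Tcor})--(\ref{StrechJ04Tcor}) to vanish at $t=0$ (hence $\pi_m|_{t=0}=0$ and $(w^0_1,w^0_2)|_{t=0}=0$), together with $\langle\pi^0\rangle|_{t=0}=0$, I conclude
\begin{equation*}
\Big(\gamma+\frac{\alpha^2(1-2\nu)}{2(1-\nu)}\Big)\langle\pi^0\rangle+\frac{\alpha(1-2\nu)}{1-\nu}\,\mathrm{div}_{y_1,y_2}(w^0_1,w^0_2)=0,
\end{equation*}
which is exactly (\ref{StrechJ01Tcor}) with $\pi_m$ replaced by $\langle\pi^0\rangle$; positivity of the coefficient of $\pi_m$ gives $\langle\pi^0\rangle=\pi_m$.

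Finally I substitute $\pi^0=\pi_w+\pi_m$ into (\ref{presstab1cor}). The $y_3$-derivative terms are unchanged, and the bracketed group in the time-derivative becomes
\begin{equation*}
\Big(\gamma+\frac{\alpha^2(1-2\nu)}{2(1-\nu)}\Big)\pi_w+\underbrace{\Big(\gamma+\frac{\alpha^2(1-2\nu)}{2(1-\nu)}\Big)\pi_m+\frac{\alpha(1-2\nu)}{1-\nu}\mathrm{div}_{y_1,y_2}(w^0_1,w^0_2)}_{=\,0\ \text{by }(\ref{StrechJ01Tcor})},
\end{equation*}
which yields (\ref{presstabD1cor}). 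The only subtle step is the identification $\langle\pi^0\rangle=\pi_m$, which hinges precisely on the zero-initial-data assumption from \ref{Hypoth1}; the rest is purely algebraic.
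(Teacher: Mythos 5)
Your proof is correct, and it takes the natural route that the paper leaves unwritten (the corollary is stated without proof): substitute $\pi^0=\pi_w+\langle\pi^0\rangle$ into (\ref{presstab1cor})--(\ref{presstab3cor}), observe that $\partial_{y_3}\pi_w=\partial_{y_3}\pi^0$, and cancel the $y_3$-independent part using the membrane relation (\ref{StrechJ01Tcor}) with $\pi_m=\langle\pi^0\rangle$.

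Your intermediate identification $\langle\pi^0\rangle=\pi_m$ is the one nontrivial ingredient, and you handle it in a slightly different way than the paper does. The paper obtains (\ref{StrechJ01Tcor}) for $\langle\pi^0\rangle$ directly in Proposition~\ref{strechlim1} by testing the limit pressure equation (\ref{ScaledvariationalPtab1cor}) with $\zeta=\zeta(y_1,y_2,t)$, and then records $\pi_m=\langle\pi^0\rangle$ in Proposition~\ref{prop1Tcor11}; you instead integrate the strong form (\ref{presstab1cor}) over $y_3\in(-1,1)$, kill the $\partial_{y_3}^2$ term with the flux boundary conditions and the $y_3$-linear source term by odd symmetry, integrate in time, and use the zero initial data from Assumptions~\ref{Hypoth1} together with the sign of the coefficient $\gamma+\alpha^2(1-2\nu)/(2(1-\nu))>0$. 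These are equivalent: testing the weak form with $\zeta$ independent of $y_3$ is precisely averaging the strong form over the thickness. Your version has the small advantage of making explicit where the compact-support-in-$(0,T]$ assumption enters, namely in forcing $(w_1^0,w_2^0,\pi_m)\big|_{t=0}=0$ so that the time-integration constant vanishes.
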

\begin{proposition} The limit plate deflection  $w^0_{3}$ satisfies  equation (\ref{Bendtab0cor1}).
\end{proposition}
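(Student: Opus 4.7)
The plan is to exploit the reduced variational identity (\ref{LimitVKL02Dcor}) established in the proof of Proposition \ref{strechlim1}, which holds for every test function $\varphi \in V_{KL}(\Omega)$. By Lemma \ref{lemma3} and Corollary \ref{coro1}, a natural pure-bending test function is available: for $\phi \in C_c^\infty(\omega)$, set
\[
\varphi = (-y_3\, \partial_{y_1}\phi,\; -y_3\, \partial_{y_2}\phi,\; \phi) \in V_{KL}(\Omega)
\]
(i.e., taking $g_1=g_2=0$ and $g_3=\phi$ in the characterization of Lemma \ref{lemma3}). This corresponds to a pure bending configuration with zero in-plane midsurface displacement, so upon substitution into (\ref{LimitVKL02Dcor}) the in-plane components $w_1^0, w_2^0$ of $\mathbf{w}^*$ decouple from the bending contribution in $w_3^0$.

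First I would compute $e_{ij}(\varphi) = -y_3\, \partial^2_{y_iy_j}\phi$ and $\mbox{div}_{y_1,y_2}\tilde\varphi = -y_3\,\Delta_{y_1,y_2}\phi$, and recall from Corollary \ref{coro1} that $e_{ij}(\mathbf{w}^*) = e_{ij}(\mathbf{w}^0) - y_3\,\partial^2_{y_iy_j}w_3^0$ and $\mbox{div}_{y_1,y_2}\tilde{\mathbf{w}}^* = \mbox{div}_{y_1,y_2}(w_1^0,w_2^0) - y_3\,\Delta_{y_1,y_2}w_3^0$. Substituting into (\ref{LimitVKL02Dcor}) and performing the $y_3$-integration over $(-1,1)$, every product containing an odd power of $y_3$ is killed (in particular all cross terms between $\mathbf{w}^0$ and $w_3^0$), whereas the $y_3^2$ products contribute the factor $\int_{-1}^1 y_3^2\,dy_3 = 2/3$. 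The boundary terms on $\Sigma^\pm$ are obtained by simply evaluating $\varphi$ at $y_3 = \pm 1$.

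The bending part of the elastic form then produces $\tfrac{4}{3}\sum_{i,j=1}^2 \partial^2_{y_iy_j}w_3^0\,\partial^2_{y_iy_j}\phi$, while the $\mbox{div}\,\tilde{\mathbf{w}}^*\cdot\mbox{div}\,\tilde\varphi$ contribution in (\ref{LimitVKL02Dcor}) produces $\tfrac{4\nu}{3(1-\nu)}\Delta_{y_1,y_2} w_3^0\,\Delta_{y_1,y_2}\phi$. Using the standard identity
\[
\int_\omega \sum_{i,j=1}^2 \partial^2_{y_iy_j} w\, \partial^2_{y_iy_j}\phi\;dy_1 dy_2 = \int_\omega \Delta_{y_1,y_2} w\,\Delta_{y_1,y_2}\phi\;dy_1 dy_2,
\]
valid for $\phi \in C_c^\infty(\omega)$, these combine through the Poisson-ratio algebra $\tfrac{4}{3}+\tfrac{4\nu}{3(1-\nu)} = \tfrac{4}{3(1-\nu)}$, and the pressure contribution collapses to $\tfrac{\alpha(1-2\nu)}{1-\nu}\,\Delta_{y_1,y_2}\phi \int_{-1}^{1} y_3\,\pi^0\,dy_3$.

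Finally, a double integration by parts in $(y_1, y_2)$ against $\phi \in C_c^\infty(\omega)$, followed by density of $C_c^\infty(\omega)$ in $H_0^2(\omega)$, yields (\ref{Bendtab0cor1}) in strong form; the right-hand side is produced by the surface integrals on $\Sigma^\pm$ (with one integration by parts on the horizontal tractions $\mathcal{P}_1,\mathcal{P}_2$). The main obstacle is pure bookkeeping: tracking the $y_3$-parity of each product, verifying the algebra $\tfrac{4}{3}+\tfrac{4\nu}{3(1-\nu)} = \tfrac{4}{3(1-\nu)}$, and keeping signs consistent on the traction terms. No new analytic ingredient is required beyond (\ref{LimitVKL02Dcor}), the characterization of $V_{KL}$, and the $H_0^2$-identity relating the Hessian inner product to the bi-Laplacean.
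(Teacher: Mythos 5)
Your proposal is correct and follows essentially the same route as the paper: the paper also tests with the pure-bending Kirchhoff--Love field $\varphi=(-y_3\,\partial_{y_1}g_3,\,-y_3\,\partial_{y_2}g_3,\,g_3)$, $g_3\in H^2_0(\omega)$, passes to the limit in the variational identity (equivalently, specializes (\ref{LimitVKL02Dcor})), and lets the $y_3$-parity kill the cross terms so that only the $y_3^2$ bending contributions and the $y_3\pi^0$ moment survive, arriving at (\ref{LimitVKL0Bcor1}) and hence (\ref{Bendtab0cor1}). Your additional explicit steps (the factor $\int_{-1}^1 y_3^2\,dy_3=2/3$, the Hessian--bi-Laplacean identity, the Poisson-ratio algebra, and the integration by parts on the tangential tractions) are exactly the bookkeeping the paper leaves implicit.
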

\begin{proof} We take as test $\varphi =\displaystyle (-y_3 \frac{\p g_3 }{\p y_1} , -y_3 \frac{\p g_3 }{\p y_2} , g_3 )$, with $g_3 \in H_0^2 (\o )$. It yields
\begin{gather*}
    e_{11} (\varphi ) = -y_3 \frac{\p^2 g_3 }{\p y_1^2} , \quad e_{22} (\varphi ) = -y_3 \frac{\p^2 g_3 }{\p y_2^2} , \quad e_{12} (\varphi ) = -y_3 \frac{\p^2 g_3 }{\p y_1 \p y_2} , \\
    \mbox{ div }_{y_1 , y_2} (\varphi_1 , \varphi_2 ) = -y_3 \Delta_{y_1 , y_2 } g_3
\end{gather*}
Passing to the limit  $\ep \to 0$ in equation (\ref{ScaledvariationalT1cortab}) yields
\begin{gather}
    \int_{\Omega} \big( 2 y_3^2 \sum_{i,j=1}^{2}   \frac{\p^2 { w}_{3}^0}{\p y_i \p y_j}  \frac{\p^2 g_{3}}{\p y_i \p y_j} + y_3 \frac{1-2\nu}{1-\nu} (\alpha \pi^0 +
     \frac{2\nu y_3}{1-2\nu}\Delta_{y_1,y_2}  { w}_{3}^0 ) \Delta_{y_1,y_2} g_3 \big) dy \notag \\
     =-\sum_{j=1}^{2}  \int_{\Sigma^{+}\cup\Sigma^{-}}{  \mathcal{P}_{j}}{ y_3 \frac{\p g_3 }{\p y_j}}ds+  \int_{\Sigma^{+}\cup\Sigma^{-}} {  \mathcal{P}_{3}}{ g_3}ds .\label{LimitVKL0Bcor1}
\end{gather}
Equation (\ref{LimitVKL0Bcor1}) implies (\ref{Bendtab0cor1}).
\end{proof}
\begin{proposition}\label{Wpbending} System (\ref{presstabD1cor})-(\ref{Bendtab0cor1}) has a unique solution.\end{proposition}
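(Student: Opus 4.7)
The plan is to treat the system as a coupled parabolic--biharmonic problem in $(\pi_w, w_3^0)$ and to build well-posedness around a single energy identity that benefits from a favorable cancellation between (\ref{presstabD1cor}) and (\ref{Bendtab0cor1}). Set $a = \gamma + \alpha^2(1-2\nu)/(2(1-\nu)) > 0$ and $b = \alpha(1-2\nu)/(1-\nu)$. First I would observe that $\int_{-1}^{1} y_3 \pi^0\,dy_3 = \int_{-1}^{1} y_3 \pi_w\,dy_3$ since $\langle \pi^0\rangle$ is $y_3$--independent, so (\ref{Bendtab0cor1}) couples $w_3^0$ only to the first odd $y_3$--moment of $\pi_w$. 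Testing (\ref{presstabD1cor}) against a function depending only on $(y_1, y_2)$, and using $\int_{-1}^{1} y_3\,dy_3 = 0$ together with the identical Neumann data at $y_3 = \pm 1$, yields $a\,\partial_t\int_{-1}^{1}\pi_w\,dy_3 = 0$, so the zero--average constraint $\int_{-1}^{1}\pi_w\,dy_3 \equiv 0$ is automatically propagated from the zero initial datum and may be imposed as part of the functional setting.

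The heart of the argument is the following cancellation. Test (\ref{presstabD1cor}) with $\pi_w$ and integrate by parts in $y_3$, using (\ref{presstabD3cor}); test (\ref{Bendtab0cor1}) with $\partial_t w_3^0 \in H_0^2(\omega)$ and integrate by parts twice in $(y_1, y_2)$, using $w_3^0 \in H_0^2(\omega)$. Since $\Delta_{y_1,y_2}\partial_t w_3^0$ is independent of $y_3$, the coupling contribution arising from (\ref{presstabD1cor}) is
\[ -\,b\int_\omega \Delta_{y_1,y_2}\partial_t w_3^0 \cdot \Bigl(\int_{-1}^{1} y_3\,\pi_w\,dy_3\Bigr)\,dy_1\,dy_2, \]
which is the exact opposite of the corresponding term coming from (\ref{Bendtab0cor1}); adding the two tested equations produces
\[ \frac{d}{dt}\Bigl[\frac{a}{2}\|\pi_w\|_{L^2(\Omega)}^2 + \frac{2}{3(1-\nu)}\|\Delta_{y_1,y_2} w_3^0\|_{L^2(\omega)}^2\Bigr] + \|\partial_{y_3}\pi_w\|_{L^2(\Omega)}^2 = \mathcal{R}(t), \]
where $\mathcal{R}(t)$ gathers the boundary contributions from $U^1$ and from the loads $\mathcal{P}_j$. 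Using the $H^1$--in--$t$ regularity assumed on the data one integrates the load term by parts in time, bounds it by Young's and Poincar\'e's inequalities on $\omega$, and closes by Gronwall. Uniqueness follows at once: for zero data the identity forces $\pi_w \equiv 0$ and $\Delta_{y_1,y_2} w_3^0 \equiv 0$, after which (\ref{Bendtab0cor1}) with zero right--hand side and $w_3^0 \in H_0^2(\omega)$ gives $w_3^0 \equiv 0$.

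For existence I would proceed exactly as in Proposition \ref{epexist}: take a basis $\{\beta_k\}$ of $H_0^2(\omega)$ orthogonal for the bi--Laplacian and a basis $\{\xi_\ell\}$ of the zero--mean subspace of $L^2(\Omega)$ built from tensor products of $(y_1,y_2)$--modes with the Neumann eigenfunctions of $-\partial_{y_3}^2$ on $(-1,1)$. Writing $w_3^{0,N} = \sum_k b_k(t)\beta_k$, $\pi_w^N = \sum_\ell s_\ell(t)\xi_\ell$, projecting (\ref{presstabD1cor})--(\ref{Bendtab0cor1}) on these spans, and using the bending equation to express $b$ algebraically in terms of $s$ reduces the problem to a linear ODE for $s$ whose mass matrix has the form $aI +$ (positive semidefinite correction), hence is invertible and produces a unique smooth Galerkin solution on $[0,T]$. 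The $N$--uniform bounds supplied by the energy identity above yield weak compactness; the limit satisfies the system, and $H^1$--in--time regularity is recovered by differentiating the equations in $t$ and repeating the estimate, just as at the end of Proposition \ref{epexist}. The main technical obstacle is the degeneracy of (\ref{presstabD1cor}) --- diffusion only in $y_3$ --- which prevents any direct application of classical parabolic coercivity on $\Omega$; the cancellation in the previous paragraph is precisely what compensates for it, replacing the missing in--plane diffusion of the pressure by the biharmonic coercivity of (\ref{Bendtab0cor1}) on $\omega$.
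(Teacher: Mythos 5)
Your uniqueness argument is exactly the paper's: the paper also tests the bending equation with $\partial_t w_3^0$ and the pressure equation with the pressure itself, observes that the two cross terms $\pm\,\alpha\frac{1-2\nu}{1-\nu}\int_\Omega y_3\,\pi\,\Delta_{y_1,y_2}\partial_t w_3^0$ cancel, and concludes from the resulting energy identity that a solution with homogeneous data vanishes. The only difference is in how existence is handled. The paper does not prove existence inside this proposition at all --- it is already supplied by the convergence analysis of Section \ref{convg6}, where $\{w_3^0,\pi_w\}$ arise as weak limits of the $\ep$-problem and are shown to satisfy (\ref{presstabD1cor})--(\ref{Bendtab0cor1}); the proposition therefore only needs the homogeneous-data argument. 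You instead build a self-contained Galerkin existence proof, eliminating the (static, elliptic) bending unknown algebraically so that the ODE system for the pressure modes has mass matrix $aI+{}$(positive semidefinite), mirroring Proposition \ref{epexist}. That is a valid and slightly more self-contained route; what it costs is some extra care that the paper avoids (and that you partly gloss over): in the uniqueness step you must note that the elliptic bending equation at $t=0$, with zero loads and $\pi^0|_{t=0}=0$, forces $w_3^0(0)=0$, so that the conserved energy starts from zero --- the identity alone only gives monotonicity. Your preliminary observations (that only the odd $y_3$-moment of $\pi_w$ enters the bending equation, and that the zero vertical average of $\pi_w$ propagates in time) are correct and consistent with the normalization $\int_{-1}^1\pi_w\,dy_3=0$ that the paper builds into the definition of $\pi_w=\pi^0-\langle\pi^0\rangle$.
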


\begin{proof} It is enough to study the problem with the homogeneous data. Let $(u^0_3 , p^0 )\in H^1 (0,T; H^2_0 (\o))\times H^1 (0,T; H^1 (\Omega))$, $<p^0> =0$, be a solution. We take $g_3 = \p_t  u_{3}^0$ in (\ref{LimitVKL0Bcor1}) and use $\zeta = p^0  $ as a test function for system (\ref{presstabD1cor})-(\ref{presstabD3cor}). After summing up we obtain the equality
\begin{gather}
    \p_t \int_\Omega \frac{y_3^2}{1-\nu} | \Delta_{y_1 , y_2} u_{3}^0 |^2 \ dy + \int_\Omega \frac{y_3 \alpha (1-2\nu)}{1-\nu} p^0 \Delta_{y_1 , y_2} \p_t  u_{3}^0 \ dy  +\int_\Omega | \p_{y_3}   p^0   |^2 \ dy +\notag \\
    \frac{1}{2} \p_t \int_\Omega  (\gamma + \alpha^2 \frac{1-2\nu}{2(1-\nu)} )   ( p^0  )^2 \ dy   -
     \int_\Omega \frac{y_3 \alpha (1-2\nu)}{1-\nu}  p^0  \Delta_{y_1 , y_2} \p_t  u_{3}^0 \ dy =0. \label{Bendtaben0}
\end{gather}
{  After} noting that the second and the fifth terms cancel each other, we conclude that $p^0 =0$ and
$\Delta_{y_1,y_2} u_{3}^0=0$ in $\o$. The last equation, together with $ u_{3}^0 \in H^2_0 (\o)$, yields $u_{3}^0=0$.
\end{proof}

 \begin{proposition}\label{prop1Tcor11} The whole sequence $ \{ {\bf w}(\ep) , p (\ep ) \} $ satisfies
\begin{gather}
{ w}_{j}(\ep)  \rightharpoonup  { w}_{j}^0 -y_3 \frac{\p { w}_{3}^0}{\p y_j}, \;\; j=1,2, \; 
   \mbox{ weakly in }H^1(\Omega\times(0,T)) \mbox{ as } \ep\rightarrow 0,\label{limitstab1defcor} \\
  { w}_{3}(\ep)    \rightharpoonup { w}_{3}^0
   \mbox{ weakly in }H^1(\Omega\times(0,T)) \mbox{ as } \ep\rightarrow 0,\label{limitstab1Bdefcor} \\
 \pi (\ep)  \rightharpoonup \pi^0 = \pi_m + \pi_w \mbox{ weakly in } \quad H^1(0,T; L^2(\Omega)) \quad \mbox{ as } \ep\rightarrow 0, \label{limitstab2defcor}\\
\p_{y_3} \pi (\ep)    \rightharpoonup \p_{y_3} \pi^0 \mbox{ weakly in } \quad H^1(0,T; L^2(\Omega)) \quad \mbox{ as } \ep\rightarrow 0, \label{limitstab3defcor}
\end{gather}
where
  $\mathbf{w}^0$ and $ \pi_m = < \pi^0 >$ are given by by (\ref{StrechJ01Tcor})-(\ref{StrechJ04Tcor}) and $ \{ w^0_{3}, \pi_w  \} $ by (\ref{presstabD1cor})- (\ref{Bendtab0cor1}).
\end{proposition}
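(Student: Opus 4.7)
The plan is to upgrade the subsequential weak convergence already established in the previous propositions to convergence of the whole sequence, by exploiting uniqueness of the limit system.

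First, I would recall that Proposition \ref{prop1T} delivers $\ep$-uniform bounds for $\mathbf{w}(\ep)$ in $H^1(0,T;H^1(\Omega))^3$ and for both $\pi(\ep)$ and $\partial_{y_3}\pi(\ep)$ in $H^1(0,T;L^2(\Omega))$. By reflexivity, every subsequence of $\{\mathbf{w}(\ep),\pi(\ep)\}$ admits a further weakly convergent subsequence with limit $\{\mathbf{w}^*,\pi^0\}$ of the regularity stated in Proposition \ref{Propvkl}. By Proposition \ref{Propvkl} and Corollary \ref{coro1}, for any such limit there exist $\mathbf{w}^0\in H^1(0,T;H^1_0(\omega))^3$ and $w_3^0\in H^1(0,T;H^2_0(\omega))$ with
\begin{equation*}
 w_j^*=w_j^0-y_3\partial_{y_j}w_3^0,\quad j=1,2,\qquad w_3^*=w_3^0.
\end{equation*}

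Second, I would decompose $\pi^0=\pi_m+\pi_w$ with $\pi_m=\langle\pi^0\rangle$ and invoke the limit equations already derived: Proposition \ref{strechlim1} shows that $(w_1^0,w_2^0,\pi_m)$ solves the 2D system (\ref{StrechJ01Tcor})-(\ref{StrechJ04Tcor}), while the proposition producing (\ref{presstab1cor})-(\ref{presstab3cor}) together with the corollary immediately after shows that $\pi_w$ solves (\ref{presstabD1cor})-(\ref{presstabD3cor}), and the subsequent proposition establishes the bending equation (\ref{Bendtab0cor1}) for $w_3^0$.

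Third, I would argue that the limit is unique. The in-plane system (\ref{StrechJ01Tcor})-(\ref{StrechJ04Tcor}) is completely decoupled from the bending-pressure system, since $\pi_m$ is algebraically eliminated via (\ref{StrechJ01Tcor}), reducing (\ref{StrechJ04Tcor}) to a standard 2D Navier elliptic system in $(w_1^0,w_2^0)$ with $H^{-1}$ data and homogeneous Dirichlet boundary condition; uniqueness here is classical and is precisely the content of the lemma following (\ref{StrechJ01Tcor})-(\ref{StrechJ04Tcor}). For the bending part, uniqueness of $(w_3^0,\pi_w)$ for system (\ref{presstabD1cor})-(\ref{Bendtab0cor1}) is exactly Proposition \ref{Wpbending}. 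Thus $\{\mathbf{w}^0,w_3^0,\pi_m,\pi_w\}$, and hence $\{\mathbf{w}^*,\pi^0\}$, does not depend on the extracted subsequence.

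Finally, by the standard Urysohn subsequence principle, the fact that every subsequence of $\{\mathbf{w}(\ep),\pi(\ep)\}$ has a further subsequence converging weakly to the same limit implies that the whole sequence converges weakly in the topologies specified by (\ref{limitstab1defcor})-(\ref{limitstab3defcor}). The only genuinely delicate point is the verification that the two sub-systems indeed decouple at the limit — once this is observed, uniqueness is inherited from each piece separately, and no additional compactness or energy argument is required beyond what has already been carried out.
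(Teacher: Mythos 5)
Your proposal is correct and follows exactly the route the paper intends: the paper states this proposition without an explicit proof, as a summary whose justification is precisely the chain you assemble — the a priori bounds of Proposition \ref{prop1T}, the identification of subsequential limits via Propositions \ref{Propvkl}, \ref{strechlim1} and the pressure and bending propositions, and uniqueness from the lemma for the stretching system and Proposition \ref{Wpbending}, concluded by the subsequence principle. Your observation that the bending equation sees only $\pi_w$ because $\int_{-1}^1 y_3\,\pi_m\,dy_3=0$ correctly settles the decoupling needed for uniqueness.
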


\section{Strong convergence}\label{strongcoup}

In this section we establish that the weak convergences from previous section imply the strong convergences.

With such aim, we introduce the corrected unknowns, for which the weak convergence to zero was already established in Subsection    \ref{convg6}.

Let $\Psi_\ep \in C^\infty_0 (\o )$ be a regularized truncation  of the indicator function $1_{\o} $, equal to $1_{\o} $ if dist $(y, \p \o ) \geq {\ep}$ and such that $\displaystyle || \nabla _{y_1 , y_2 } \Psi_\ep ||_{L^q (\o )} =C \ep^{1/q -1}$ . We set
\begin{equation}\label{Corrtab0S}
    \left\{
      \begin{array}{ll}
        \displaystyle \xi_{j} (\ep) = w_{j} (\ep ) - w_{j}^0  +y_3 \frac{\p w^0_3}{\p y_j}  , \quad j=1,2; &  \\
       \displaystyle \xi_{3} (\ep ) = w_{3} (\ep) - w_3^0 - \ep^2 \mathcal{E}  ,   \ &  \\
        \displaystyle \kappa (\ep ) =\pi (\ep ) - \pi^0 , &
      \end{array}
    \right.
\end{equation}
with $\displaystyle \mathcal{E} = \Psi_\ep (y_1 , y_2 ) \int^{y_3}_0 E_{cor}^* (y_1, y_2, a , t) \ da .$
The choice of correcting terms is explained by the following result
\begin{lemma}\label{Convsd1} We have
\begin{gather}
    \frac{1}{\ep} e_{j3} (\xi (\ep )) \rightharpoonup 0 \quad \mbox{weakly in} \quad H^1 (0,T; L^2(\Omega)), \;  j=1,2,3, \; \mbox{ as } \; \ep \to 0, \label{Shearco1} \\
    \xi (\ep ) \rightharpoonup 0 \quad \mbox{weakly in} \quad H^1 (0,T; H^1_0 (\Omega) )^3, \; \mbox{ as } \; \ep \to 0, \label{Shearco2} \\
    \frac{1}{\ep^2} e_{33} (\xi (\ep )) \rightharpoonup 0 \quad \mbox{weakly in} \quad H^1 (0,T; L^2(\Omega)), \;   \mbox{ as } \; \ep \to 0, \label{Shearco3}
\end{gather}
\end{lemma}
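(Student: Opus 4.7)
\medskip
\noindent\textbf{Proof plan for Lemma \ref{Convsd1}.}

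The strategy is to rewrite each strain component of the corrector $\xi(\ep)$ in closed form in terms of quantities whose weak limits were already identified, and then isolate the contribution of the cutoff $\Psi_\ep$, showing it is negligible thanks to the chosen scaling of $\|\nabla_{y_1,y_2}\Psi_\ep\|_{L^q}$. First I would compute directly from \eqref{Corrtab0S}:
\begin{gather*}
 2 e_{j3}(\xi(\ep)) = \frac{\partial w_j(\ep)}{\partial y_3} + \frac{\partial w_3(\ep)}{\partial y_j} - \ep^2 \frac{\partial \mathcal{E}}{\partial y_j}
  = 2 e_{j3}({\bf w}(\ep)) - \ep^2 \frac{\partial \mathcal{E}}{\partial y_j}, \quad j=1,2, \\
 e_{33}(\xi(\ep)) = \frac{\partial w_3(\ep)}{\partial y_3} - \ep^2 \Psi_\ep E_{cor}^*,
\end{gather*}
where I used that $w_3^0$ does not depend on $y_3$, and also that the terms $y_3\partial_{y_j}w_3^0$ contribute $+\partial_{y_j}w_3^0$ to $\partial_{y_3}\xi_j$ and $-\partial_{y_j}w_3^0$ to $\partial_{y_j}\xi_3$, which exactly cancel.

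The three convergences are then tackled in order. For \eqref{Shearco2}, the weak convergences \eqref{limitstab1defcor}--\eqref{limitstab1Bdefcor} yield $\xi_j(\ep)\rightharpoonup 0$ in $H^1(\Omega\times(0,T))$ for $j=1,2$ and $w_3(\ep)-w_3^0\rightharpoonup 0$ in the same space. Since $E_{cor}^*\in H^1(0,T;L^2(\Omega))$ has the regularity inherited from Proposition \ref{propE0*tabcorr} together with the limit equations (\ref{StrechJ01Tcor})--(\ref{Bendtab0cor1}), and since $\ep^2 \Psi_\ep\int_0^{y_3}E_{cor}^*\,da \to 0$ strongly in $H^1(0,T;H^1(\Omega))$ (the worst factor being $\ep^2\|\nabla_{y_1,y_2}\Psi_\ep\|_{L^2}=O(\ep^{3/2})$), the correction $\ep^2\mathcal{E}$ drops out and $\xi_3(\ep)\rightharpoonup 0$. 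The vanishing of $\xi(\ep)$ on $\Gamma$ is built in: $w_j(\ep)|_\Gamma=0$, $(w_1^0,w_2^0,w_3^0)\in H_0^1(\omega)^2\times H_0^2(\omega)$ guarantees the vanishing of $w_j^0-y_3\partial_{y_j}w_3^0$ on $\Gamma$, and $\Psi_\ep$ is supported away from $\partial\omega$ at distance $\ep$.

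For \eqref{Shearco1} with $j=1,2$, I divide the first identity above by $\ep$ and apply Proposition \ref{propej30} to obtain $e_{j3}({\bf w}(\ep))/\ep \rightharpoonup 0$ weakly in $L^2$. The remainder $\frac{\ep}{2}\partial_{y_j}\mathcal{E}$ splits into a piece with $\nabla\Psi_\ep$ of $L^2$-norm $O(\ep\cdot\ep^{-1/2})=O(\ep^{1/2})$ and a piece $\Psi_\ep\int_0^{y_3}\partial_{y_j}E_{cor}^*\,da$ multiplied by $\ep$, both going to $0$ strongly. The case $j=3$ of \eqref{Shearco1} follows a fortiori from \eqref{Shearco3}. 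For \eqref{Shearco3}, I divide the second identity by $\ep^2$:
\begin{equation*}
\frac{1}{\ep^2}e_{33}(\xi(\ep)) = \frac{1}{\ep^2}\frac{\partial w_3(\ep)}{\partial y_3} - \Psi_\ep E_{cor}^*.
\end{equation*}
By Proposition \ref{propE0*tabcorr} the first term converges weakly in $L^2$ to $E_{cor}^*$, while $\Psi_\ep E_{cor}^*\to E_{cor}^*$ strongly in $L^2$ (dominated convergence, since $\Psi_\ep\to 1$ a.e. and $0\le\Psi_\ep\le 1$), so the difference converges weakly to $0$. This is precisely the motivation for the choice of correctors in \eqref{Corrtab0S}.

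Finally, to upgrade the weak $L^2_{t,y}$ convergences to weak $H^1(0,T;L^2(\Omega))$, I would time-differentiate each identity above and use the uniform bounds (\ref{apriori1T})--(\ref{apriori7T}), together with the fact that $\partial_t \mathcal{E}$ is controlled by $\partial_t E_{cor}^*\in L^2(0,T;L^2(\Omega))$ from the limit system. Since each subsequential $H^1(0,T;L^2)$-weak limit of $\frac{1}{\ep}e_{j3}(\xi(\ep))$ or $\frac{1}{\ep^2}e_{33}(\xi(\ep))$ must coincide with the already-identified $L^2_{t,y}$-weak limit, namely zero, the whole sequence converges weakly. The main technical obstacle is the bookkeeping around $\Psi_\ep$: one must verify once and for all that $\ep^{\alpha}\Psi_\ep$ and its $y_j$-derivatives with the factors dictated by the algebra above are negligible in the right norms; the prescribed scaling $\|\nabla_{y_1,y_2}\Psi_\ep\|_{L^q(\omega)}=C\ep^{1/q-1}$ is calibrated precisely for this (with $q=2$ giving the $\ep^{1/2}$ slack used for \eqref{Shearco1}).
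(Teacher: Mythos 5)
Your proof follows essentially the same route as the paper's, which is in fact much terser: the paper simply writes the two algebraic identities
\[
\frac{1}{\ep}\,e_{j3}(\xi(\ep)) = \frac{1}{\ep}\,e_{j3}(\mathbf{w}(\ep)) - \frac{\ep}{2}\frac{\partial\mathcal{E}}{\partial y_j},
\qquad
\frac{1}{\ep^2}\,e_{33}(\xi(\ep)) = \frac{1}{\ep^2}\,e_{33}(\mathbf{w}(\ep)) - \frac{\partial\mathcal{E}}{\partial y_3},
\]
invokes Propositions \ref{propej30} and \ref{propE0*tabcorr} for the first summands, and states \eqref{Shearco2} as a direct consequence of Proposition \ref{Propvkl}, Corollary \ref{coro1}, and ``smallness of the correcting terms.'' Your derivation reproduces both identities (including the exact cancellation of the $\partial_{y_j}w_3^0$ contributions) and identifies the right limits for the leading terms; you additionally make explicit the two things the paper leaves unsaid, namely that the $j=3$ case of \eqref{Shearco1} is a trivial consequence of \eqref{Shearco3}, and how the $\Psi_\ep$-dependent remainders are killed by the prescribed $L^q$ scaling of $\nabla_{y_1,y_2}\Psi_\ep$. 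One caveat worth flagging: your $O(\ep^{1/2})$ bound on $\ep\,\partial_{y_j}\Psi_\ep\int_0^{y_3}E_{cor}^*\,da$ tacitly uses an $L^\infty$ bound on $\int_0^{y_3}E_{cor}^*\,da$ (or an $L^q/L^{q'}$ H\"older split relying on higher interior regularity of $w^0$ and $\pi^0$), which is not established in the paper either; this is a gap inherited from the source, not one you introduced, and it is the implicit regularity hypothesis justifying the definition of $\mathcal{E}$ in the first place.
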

\begin{proof}
First, using definition (\ref{Corrtab0S}) we find out that
\begin{gather*}
     \frac{1}{\ep} e_{j3} (\xi (\ep )) = \frac{1}{\ep} e_{j3} (\mathbf{w} (\ep ))  -\frac{\ep}{2} \frac{\p \mathcal{E}}{\p y_j} \underbrace{\rightharpoonup}_{by (\ref{Shearcon})} 0,
\end{gather*}
implying (\ref{Shearco1}).\vskip1pt
(\ref{Shearco2}) follows from Proposition \ref{Propvkl} and Corollary \ref{coro1} and smallness of the correcting terms.\vskip1pt
Finally,
\begin{gather*}
     \frac{1}{\ep^2} e_{33} (\xi (\ep )) = \frac{1}{\ep^2} e_{33} (\mathbf{w} (\ep )) - \frac{\p \mathcal{E}}{\p y_3} \underbrace{\rightharpoonup}_{by (\ref{Comptab0cor})} 0.
\end{gather*}
\end{proof}
Next, using equation (\ref{LimitVKL02Dcor}) we obtain
\begin{gather}
2 \int_{\Omega}\sum_{i,j=1}^{2}   e_{ij}({\bf \tilde w}^0 -y_3 \nabla_{y_1 , y_2 } w_3^0 ) e_{ij}({\bf \varphi}) dy + \frac{2\nu }{1-2\nu} \int_{\Omega} \mbox{ div}_{y_1,y_2}  ({\bf \tilde w}^0 \notag\\
-y_3 \nabla_{y_1 , y_2 } w_3^0  ) \mbox{ div}_{y_1,y_2}{\bf \tilde  \varphi}dy
-\alpha\int_{\Omega}
\pi^0  \mbox{ div}_{y_1,y_2}{\bf \tilde  \varphi}dy  + 2 \int_{\Omega}\sum_{j=1}^{2}  \frac{\p \mathcal{E}}{\p y_j}  e_{3j}({\bf \varphi}) dy\notag\\
+\frac{1}{\ep^2}\Big\{
 - \alpha\int_{\Omega}
\pi^0  \partial_{y_3}{ \varphi_3}dy+
 \frac{2\nu }{1-2\nu} \int_{\Omega} \mbox{ div}_{y_1,y_2}  ({\bf \tilde w}^0
-y_3 \nabla_{y_1 , y_2 } w_3^0  ) \partial_{y_3}{ \varphi_3}dy\notag \\
+ \frac{2\nu }{1-2\nu} \int_{\Omega}   \frac{\p ( w_3^0 + \ep^2 \mathcal{E} ) }{\p y_3} \mbox{ div}_{y_1,y_2}{\bf \tilde  \varphi}dy\Big\}+ \frac{2(1-\nu ) }{(1-2\nu )\ep^4} \int_{\Omega} \partial_{y_3}  ( w_3^0 + \ep^2 \mathcal{E} ) \partial_{y_3}{ \varphi_3}dy\notag\\
=  \sum_{j=1}^{2}  \int_{\Sigma^{+}\cup\Sigma^{-}}{  \mathcal{P}_{j}}{ \varphi_j}ds+  \int_{\Sigma^{+}\cup\Sigma^{-}} {  \mathcal{P}_{3}}{ \varphi_3}ds +\frac{2\nu }{1-2\nu} \int_{\Omega} (\Psi_\ep -1 )E^*_{cor} \mbox{ div}_{y_1,y_2}{\bf \tilde  \varphi} \ dy  \notag \end{gather}\begin{gather}
    + 2 \int_\Omega \sum_{j=1}^2 \frac{\p \mathcal{E}}{\p y_j} e_{j3} (\varphi ) \ dy  -\frac{\alpha}{\ep^2}\int_{\Omega}
\pi^0   \partial_{y_3}{ \varphi_3}\ dy
 +\frac{1}{\ep^2}\frac{2\nu }{1-2\nu} \int_{\Omega} \mbox{ div}_{y_1,y_2}  ({\bf \tilde w}^0
-\notag \\ y_3 \nabla_{y_1 , y_2 } w_3^0  ) \partial_{y_3}{ \varphi_3}dy
 + \frac{2(1-\nu ) }{(1-2\nu )\ep^2} \int_{\Omega}\Psi_\ep E^*_{cor}  \partial_{y_3}{ \varphi_3}dy, \quad \forall {\bf \varphi}\in V,  t\in (0,T).\label{ScaledvariationalSaux1}\end{gather}
Consequently, we observe that  $\{ \xi (\ep ) , \kappa (\ep) \}$ satisfy the following variational equation
\begin{eqnarray}
&&2 \int_{\Omega}\sum_{i,j=1}^{2}   e_{ij}(\xi (\ep)) e_{ij}({\bf \varphi}) dy + \frac{2\nu }{1-2\nu} \int_{\Omega} \mbox{ div}_{y_1,y_2}  { \tilde \xi} (\ep) \mbox{ div}_{y_1,y_2}{\bf \tilde  \varphi}dy \nonumber\\&&-\alpha\int_{\Omega}
\kappa (\ep)  \mbox{ div}_{y_1,y_2}{\bf \tilde  \varphi}dy
+\frac{1}{\ep^2}\Big\{4 \int_{\Omega}\sum_{j=1}^{2}   e_{3j}(\xi (\ep)) e_{3j}({\bf \varphi}) dy \nonumber\\
&& - \alpha\int_{\Omega}
\kappa(\ep)  \partial_{y_3}{ \varphi_3}dy+ \frac{2\nu }{1-2\nu} \int_{\Omega} \mbox{ div}_{y_1,y_2}  { \tilde \xi} (\ep) \partial_{y_3}{ \varphi_3}dy\nonumber\\
&&+ \frac{2\nu }{1-2\nu} \int_{\Omega} \partial_{y_3}  {\xi_{3} (\ep) } \mbox{ div}_{y_1,y_2}{\bf \tilde  \varphi}dy\Big\}+ \frac{2(1-\nu ) }{(1-2\nu )\ep^4} \int_{\Omega} \partial_{y_3}  { \xi_{3} (\ep) } \partial_{y_3}{  \varphi_3}dy = \nonumber\\
&& \hskip-17pt -2 \int_\Omega  \sum_{j=1}^2  \frac{\p \mathcal{E}}{\p y_j} e_{j3} (\varphi ) \ dy    +\frac{\alpha}{\ep^2}\int_{\Omega}
\pi^0   \partial_{y_3}{ \varphi_3}\ dy - \frac{2\nu }{1-2\nu} \int_{\Omega} (\Psi_\ep -1 )E^*_{cor} \mbox{ div}_{y_1,y_2}{\bf \tilde  \varphi} \ dy \nonumber \\
&&-\frac{2\nu }{1-2\nu} \int_{\Omega} \mbox{ div}_{y_1,y_2}  ({\bf \tilde w}^0
-y_3 \nabla_{y_1 , y_2 } w_3^0  ) \frac{\partial_{y_3}{ \varphi_3}}{\ep^2} \ dy\nonumber \\
 && - \frac{2(1-\nu ) }{(1-2\nu )\ep^2} \int_{\Omega}\Psi_\ep  E^*_{cor}  \partial_{y_3}{ \varphi_3}dy, \quad
 \forall {\bf \varphi}\in V,  t\in (0,T).\label{ScaledvariationalS1}
\end{eqnarray}
Effective pressure equation (\ref{ScaledvariationalPtab1cor}) implies
\begin{eqnarray}
&&\gamma \int_{\Omega} \partial_t \pi^0 \zeta \ dy + \int_{\Omega} \alpha (\mbox{ div}_{y_1,y_2}  \partial_t  {\bf \tilde w}^*     +\ep^{-2} \p_t \p_{y_3} (w_3^0 +\ep^2 \mathcal{E} )) \zeta dy
\nonumber\\&& + \int_{\Omega} \frac{\partial \pi^0}{\partial y_3}\frac{\partial \zeta}{\partial y_3} \ dy + \ep^2 \int_{\Omega} \nabla_{y_1 , y_2 } \pi^0 \nabla_{y_1 , y_2 } \zeta \ dy = \ep^2 \int_{\Omega} \nabla_{y_1 , y_2 } \pi^0 \nabla_{y_1 , y_2 } \zeta \ dy     \nonumber\\&& -\int_{\Sigma^+ \cup \Sigma^-} \pm U^1 \zeta ds +
\alpha \int_{\Omega} (\Psi_\ep -1 ) \p_t E^*_{cor} \zeta \ dy
, \quad \forall {\zeta}\in H^1(\Omega).\label{ScaledvariationalPStab1cor}
\end{eqnarray}
Consequently the pressure equation for
$\{ \xi (\ep ) , \kappa (\ep) \}$ is
\begin{eqnarray}
&&\gamma \int_{\Omega} \partial_t \kappa (\ep) \zeta dy + \int_{\Omega} \alpha \mbox{ div}_{y_1,y_2}  {\partial_t  \tilde \xi} (\ep) \zeta dy +\ep^2 \int_{\Omega} \nabla_{y_1,y_2}  \kappa(\ep)  \nabla_{y_1,y_2} \zeta dy\nonumber\\&&+\alpha  \ep^{-2}  \int_{\Omega} \partial_{y_3}  {\partial_t \xi_{3} (\ep) } \zeta \ dy + \int_{\Omega} \frac{\partial \kappa (\ep)}{\partial y_3}\frac{\partial \zeta}{\partial y_3} \ dy=- \ep \int_{-1}^1 \int_{\partial\omega} V\zeta ds\nonumber\\
&&  -\ep^2 \int_{\Omega} \nabla_{y_1 , y_2 } \pi^0 \nabla_{y_1 , y_2 } \zeta \ dy - \alpha \int_{\Omega} (\Psi_\ep -1 ) \p_t E^*_{cor} \zeta \ dy , \quad \forall {\zeta}\in H^1(\Omega).\label{ScaledvariationalTS2cor}
\end{eqnarray}
 By the choice of the correcting terms, $\xi (\ep ) =0$ on $\p \o \times (-1,1).$ Now we test (\ref{ScaledvariationalS1}) by $\varphi= \p_t \xi (\ep)$, (\ref{ScaledvariationalTS2cor}) by $\zeta = \kappa (\ep)$ and add the obtained equalities, to obtain the following energy equality
\begin{eqnarray}\label{33tabcor12}
&& \hskip-17pt \frac{1}{2} \frac{d}{dt} \Big\{ 2 \int_{\Omega}\sum_{i,j=1}^{2}   \left |e_{ij}({\xi} (\ep) )\right|^2 dy + \frac{2\nu }{1-2\nu} \int_{\Omega} \left | \mbox{ div}_{y_1,y_2}  { \tilde \xi} (\ep)  \right |^2 \ dy + \gamma \int_{\Omega} \left | \kappa (\ep)\right|^2 \ dy \nonumber\\
&&\hskip-17pt +\frac{4}{\ep^2}  \int_{\Omega}\sum_{j=1}^{2}  \left | e_{3j}({\xi} (\ep) )\right |^2 \ dy +  \frac{4\nu }{(1-2\nu )\ep^2} \int_{\Omega} \mbox{ div}_{y_1,y_2}  { \tilde \xi} (\ep)  \partial_{y_3}{\xi_{3} (\ep)} \ dy \nonumber\\
&&\hskip-17pt +  \frac{2 (1-\nu )}{(1-2\nu )\ep^4} \int_{\Omega}\left | \partial_{y_3}  \xi_{3} (\ep)\right|^2 dy \Big\}+ \ep^2 \int_{\Omega} \left | \nabla_{y_1,y_2}  {\kappa (\ep) } \right |^2dy + \int_{\Omega} \left | \frac{\partial \kappa (\ep) }{\partial y_3}  \right |^2 \ dy\nonumber\\
&&\hskip-17pt =\frac{d}{dt} \Big\{ -2 \int_\Omega  \sum_{j=1}^2 \ep \frac{\p \mathcal{E}}{\p y_j} \frac{e_{j3} (\xi (\ep) )}{\ep} \ dy    +\frac{\alpha}{\ep^2}\int_{\Omega}
\pi^0   \partial_{y_3}{ \xi_3 (\ep)}\ dy -\nonumber \\&&\hskip-17pt
\frac{2\nu }{1-2\nu} \int_{\Omega} \bigg( \mbox{ div}_{y_1,y_2}  ({\bf \tilde w}^0
-y_3 \nabla_{y_1 , y_2 } w_3^0  ) \frac{\partial_{y_3}{ \xi_3 (\ep) }}{\ep^2}
+ (\Psi_\ep -1 )E^*_{cor} \mbox{ div}_{y_1,y_2}{\bf \tilde  \xi (\ep) } \bigg) \ dy
\nonumber \\
 &&\hskip-17pt - \frac{2(1-\nu ) }{(1-2\nu )\ep^2} \int_{\Omega} \Psi_\ep E^*_{cor}  \partial_{y_3}{ \xi_3 (\ep) }dy \Big\}
 +2 \ep \int_\Omega  \sum_{j=1}^2   \frac{\p^2 \mathcal{E}}{\p y_j \p t}  \frac{e_{j3} (\xi (\ep) )}{\ep} \ dy \nonumber \\
 &&\hskip-17pt -\frac{\alpha}{\ep^2}\int_{\Omega}
\p_t \pi^0   \partial_{y_3}{ \xi_3 (\ep)}\ dy + \frac{2(1-\nu ) }{(1-2\nu )\ep^2} \int_{\Omega} \Psi_\ep \p_t E^*_{cor}  \partial_{y_3}{ \xi_3 (\ep) }dy + \nonumber \\&&
 \frac{2\nu }{1-2\nu} \int_{\Omega} \bigg( \p_t \mbox{ div}_{y_1,y_2}  ({\bf \tilde w}^0
-y_3 \nabla_{y_1 , y_2 } w_3^0  ) \frac{\partial_{y_3}{ \xi_3 (\ep) }}{\ep^2} +
(\Psi_\ep -1 )\p_t E^*_{cor} \mbox{ div}_{y_1,y_2}{\bf \tilde  \xi (\ep) } \bigg)
\ dy\nonumber \\
&&\hskip-17pt
 -\ep^2 \int_{\Omega} \nabla_{y_1 , y_2 } \pi^0 \nabla_{y_1 , y_2 } \kappa (\ep) \ dy - \ep \int_{-1}^1 \int_{\partial\omega} V \kappa (\ep) ds - \alpha \int_{\Omega} (\Psi_\ep -1 ) \p_t E^*_{cor} \kappa (\ep) \ dy .
\end{eqnarray}
\begin{proposition}\label{proofstrong}  For the whole sequence $\{ \xi (\ep ), \kappa (\ep) \}$ we have
\begin{gather}
e_{ij} ( \xi (\ep ) ) \to 0 \quad \mbox{strongly in} \quad H^1 (0,T; L^2 (\Omega )), \, i,j=1,2, \; \mbox{ as } \; \ep\to 0, \label{St1} \\
    \xi (\ep )  \to 0 \quad \mbox{strongly in} \quad H^1 (0,T; L^2 (\Omega ))^3, \; \mbox{ as } \; \ep\to 0, \label{St2} \\
    \frac{e_{i3} ( \xi (\ep ) )}{\ep} \to 0 \quad \mbox{strongly in} \quad H^1 (0,T; L^2 (\Omega )),  \; \mbox{ as } \; \ep\to 0, \label{St3} \\
     \frac{\p  \xi_3 (\ep ) }{\ep^2 \p y_3} \to 0 \quad \mbox{strongly in} \quad H^1 (0,T; L^2 (\Omega )),  \; \mbox{ as } \; \ep\to 0, \label{St4} \\
     \kappa (\ep ) \to 0 \quad \mbox{strongly in} \quad H^1 (0,T; L^2 (\Omega )), \; \mbox{ as } \; \ep\to 0, \label{St5} \\
     \p_{y_3} \kappa (\ep ) \to 0 \quad \mbox{strongly in} \quad H^1 (0,T; L^2 (\Omega )), \; \mbox{ as } \; \ep\to 0, \label{St6} \\
     \ep \nabla_{y_1 , y_2 } \kappa (\ep ) \to 0 \quad \mbox{strongly in} \quad H^1 (0,T; L^2 (\Omega )), \; \mbox{ as } \; \ep\to 0. \label{St60}
\end{gather}
\end{proposition}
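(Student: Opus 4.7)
The plan is to use the energy equality \eqref{33tabcor12} as the workhorse: integrate it in $t$ from $0$ to any $t \in [0,T]$, recognise the left--hand side as controlling a coercive combination of the norms appearing in \eqref{St1}--\eqref{St6}, and show that the right--hand side is of the form $o(1) + o(1)\sqrt{Q(\xi(\ep),\kappa(\ep);t)}$, where $Q$ denotes that coercive combination.

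First I would verify coercivity. Undoing the rescaling $x_3 = \ep y_3$ and $\hat{\xi}_3 = \xi_3/\ep$, the bracket under $\frac{d}{dt}\{\cdot\}$ in \eqref{33tabcor12} is exactly the isotropic Lam\'e energy $\int\{2|e(\hat{\xi})|^2 + \frac{2\nu}{1-2\nu}|\mbox{div }\hat{\xi}|^2\}\,dx$ plus $\gamma\|\kappa(\ep)\|_{L^2}^2$. By Korn's inequality (applicable because $\xi(\ep)|_{\Gamma}=0$ by the choice of correctors in \eqref{Corrtab0S}) this bounds simultaneously
\begin{equation*}
\|e_{ij}(\xi(\ep))\|_{L^2}^2,\quad \|e_{i3}(\xi(\ep))/\ep\|_{L^2}^2,\quad \|\p_{y_3}\xi_3(\ep)/\ep^2\|_{L^2}^2,\quad \|\kappa(\ep)\|_{L^2}^2,
\end{equation*}
while the nonnegative dissipative terms $\ep^2\|\nabla_{y_1,y_2}\kappa\|_{L^2}^2 + \|\p_{y_3}\kappa\|_{L^2}^2$ outside the $\frac{d}{dt}$, after time integration, deliver \eqref{St6} and \eqref{St60}.

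The heart of the matter is the right--hand side. Zero initial data kill all $t=0$ contributions. At the current time $t$, the only apparently singular pieces are the three $\ep^{-2}$--terms pairing $\p_{y_3}\xi_3(\ep)$ against $\pi^0$, against $\mbox{div}_{y_1,y_2}({\bf\tilde w}^0 - y_3\nabla_{y_1,y_2}w_3^0)$ and against $\Psi_\ep E_{cor}^{*}$. By the identity of Proposition \ref{propE0*tabcorr},
\begin{equation*}
\alpha \pi^0 - \tfrac{2\nu}{1-2\nu}\mbox{div}_{y_1,y_2}{\bf\tilde w}^{*} = \tfrac{2(1-\nu)}{1-2\nu}E_{cor}^{*},
\end{equation*}
these three terms collapse to
\begin{equation*}
\tfrac{2(1-\nu)}{1-2\nu}\int_\Omega (1-\Psi_\ep)\,E_{cor}^{*}\,\tfrac{\p_{y_3}\xi_3(\ep)}{\ep^2}\,dy,
\end{equation*}
which by Cauchy--Schwarz is bounded by $\|(1-\Psi_\ep)E_{cor}^{*}\|_{L^2(\Omega)}\sqrt{Q}$. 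Since $\Psi_\ep$ differs from $1$ only on a strip of area $O(\ep)$ near $\p\o$, while $E_{cor}^{*}$ is bounded on $\Omega$ by the regularity of $\{w_j^0,\pi^0\}$ from Proposition \ref{Wpbending}, this prefactor is $o(1)$. The remaining right--hand--side terms are handled similarly: the coefficient $\ep\nabla_{y_1,y_2}\mathcal{E}$ has $L^2$--norm $O(\ep^{1/2})$ by the choice $\|\nabla_{y_1,y_2}\Psi_\ep\|_{L^2(\o)} = C\ep^{-1/2}$; the $\ep^2$ in front of $\int\nabla_{y_1,y_2}\pi^0\cdot\nabla_{y_1,y_2}\kappa(\ep)$ is absorbed against $\ep^2\|\nabla_{y_1,y_2}\kappa\|_{L^2}^2$; and the lateral trace integral $\ep\int V\kappa(\ep)$ is $o(1)$ via the trace inequality together with the bound $\|\nabla_{y_1,y_2}\pi(\ep)\|_{L^2}\le C/\ep$ from \eqref{apriori7T}.

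Collecting these estimates yields $Q(\xi(\ep),\kappa(\ep);t)\le o(1)+o(1)\sqrt{Q(\xi(\ep),\kappa(\ep);t)}$ uniformly for $t\in[0,T]$, hence $\sup_{t\in[0,T]}Q\to 0$, which is \eqref{St1}--\eqref{St6} in the $L^\infty(0,T;L^2)$ (and a fortiori $L^2(0,T;L^2)$) sense. To upgrade to the $H^1(0,T;L^2)$ norms appearing in the statement, I would differentiate \eqref{ScaledvariationalS1} and \eqref{ScaledvariationalTS2cor} in time, test by $\p_t\xi(\ep)$ and $\p_t\kappa(\ep)$, and rerun the same energy and cancellation argument, invoking the $H^1$--in--time regularity of the data from Assumptions \ref{Hypoth}. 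The main obstacle is precisely the cancellation identity above: it dictates the particular shape of the corrector $\mathcal{E}$ and the cutoff $\Psi_\ep$ in \eqref{Corrtab0S}; once it is spotted, the rest of the proof is careful bookkeeping of $\ep$--powers and boundary--layer measure estimates.
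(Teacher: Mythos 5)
Your proposal is correct and follows the same architecture as the paper's proof: integrate the energy identity \eqref{33tabcor12} in time, use the coercivity of the quadratic form on the left (which, as you observe, is the unscaled Lam\'e energy plus $\gamma\|\kappa(\ep)\|^2_{L^2}$ in disguise, so it simultaneously controls all the quantities in \eqref{St1}--\eqref{St60}), handle the lateral term $\ep\int V\kappa(\ep)$ by trace interpolation, and iterate after differentiating \eqref{ScaledvariationalS1}--\eqref{ScaledvariationalTS2cor} in time to upgrade to $H^1(0,T;L^2)$. Where you genuinely differ is the treatment of the right-hand side: the paper disposes of each term separately by pairing a fixed $L^2$ function against a quantity already known to converge weakly to zero (Lemma \ref{Convsd1} and Proposition \ref{prop1Tcor11}), so no cancellation between the three $\ep^{-2}$ terms is actually required; you instead collapse them algebraically, via the identity defining $E^*_{cor}$, into the single boundary-layer term $\frac{2(1-\nu)}{1-2\nu}\int_\Omega(1-\Psi_\ep)E^*_{cor}\,\ep^{-2}\p_{y_3}\p_t\xi_3(\ep)\,dy$ and estimate by Cauchy--Schwarz with an $o(1)$ prefactor. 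Your route is more quantitative and delivers the $\sup_{0\le t\le T}$ control in one pass, whereas the weak-convergence route gives convergence pointwise or integrated in $t$ first. One small repair is needed: $E^*_{cor}(t)$ is only known to lie in $L^2(\Omega)$, not $L^\infty(\Omega)$, so the smallness of $\|(1-\Psi_\ep)E^*_{cor}\|_{L^2(\Omega)}$ should be deduced from absolute continuity of the $L^2$ norm (dominated convergence on the $O(\ep)$-strip), made uniform in $t$ via $H^1(0,T;L^2)\hookrightarrow C([0,T];L^2)$, rather than from an area count; the same caveat applies to your $O(\ep^{1/2})$ bound on $\ep\nabla_{y_1,y_2}\mathcal{E}$, which tacitly uses a pointwise bound on $\int_0^{y_3}E^*_{cor}\,da$. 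These are regularity issues the paper itself glosses over, and they do not affect the validity of your argument.
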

\begin{proof} We follow proof of Proposition \ref{prop1T} and use equality (\ref{33tabcor12}). In fact only new term to be estimated is $\displaystyle \ep \int_{-1}^1 \int_{\partial\omega} V \kappa (\ep) ds$.  We recall the well-known interpolation inequality
$$ || \zeta ||_{L^2 (\p \omega \times (-1,1)) } \leq C || \zeta ||_{L^2 ( \Omega)}^{1/2}  || \zeta ||_{L^2 (-1,1 ; H^1 ( \omega) )}^{1/2} .$$
It yields
\begin{gather}
| \ep \int^1_{-1} \int_{\p \o} V \kappa (\ep ) \ ds | \leq C \ep || V ||_{L^2 (\p \omega \times (-1,1)) } || \kappa (\ep ) ||_{L^2 (\Omega )}^{1/2} \big(  || \kappa (\ep ) ||_{L^2 (\Omega )}^{1/2} + \notag \\
|| \nabla_{y_1 , y_2} \kappa (\ep ) ||_{L^2 (\Omega )^2}^{1/2} \big) \leq C_1 \ep^{2/3} || \kappa (\ep ) ||_{L^2 (\Omega )} + \frac{\ep^2}{2} || \nabla_{y_1 , y_2} \kappa (\ep ) ||_{L^2 (\Omega )^2}^2.
\label{estV}
\end{gather}
Now we integrate equality (\ref{33tabcor12}) in time, use Proposition \ref{prop1Tcor11} and Lemma \ref{Convsd1} and conclude the strong convergence in $L^2 (\Omega \times (0,T))$. Iterating the argument after calculating the time derivative of equations (\ref{ScaledvariationalS1}) and (\ref{ScaledvariationalTS2cor}), yields the convergences (\ref{St1})-(\ref{St60}).
\end{proof}
\section{Convergence of the poroelastic stress}\label{strongstressco}
The rescaled stress $\sigma ({\bf w}(\ep))$ is, in analogy with (\ref{Sig}), given by
\begin{equation}\label{Sigres}
    \sigma ({\bf w}(\ep)) = 2 e({\bf w}(\ep ) ) + ( \frac{2\nu }{1-2\nu} \mbox{ div }{\bf w}(\ep) - \alpha \pi (\ep) ) I  \; \mbox{ in } \; \Omega \times (0,T).
\end{equation}
Nevertheless, this quantity does not correspond to the rescaled dimensionless physical stress
$\sigma^\ep $. We introduce the rescaled poroelastic stress $\sigma (\ep)$ by
\begin{equation}\label{Signew}
   \frac{\sigma (\ep)}{\ep}= \begin{bmatrix}
      \displaystyle 2 \frac{\p w_1 (\ep)}{\p y_1} +D(\ep)   &  \displaystyle 2 e_{12} ({\bf w}(\ep))  & \displaystyle \frac{1}{\ep} e_{13} ({\bf w}(\ep)) \\
      \displaystyle 2 e_{12} ({\bf w}(\ep)) & \displaystyle 2 \frac{\p w_2 (\ep)}{\p y_2} + D(\ep) &  \displaystyle  \frac{1}{\ep} e_{23} ({\bf w}(\ep))\\
      \displaystyle  \frac{1}{\ep} e_{13} ({\bf w}(\ep))  & \displaystyle  \frac{1}{\ep} e_{23} ({\bf w}(\ep)) & \displaystyle \frac{2}{\ep^2} \frac{\p w_3 (\ep)}{\p y_3} + D(\ep)  \\
    \end{bmatrix} ,
\end{equation}
where
\begin{equation}\label{Diag}
    D(\ep) = \frac{2\nu}{1-2\nu} \mbox{div}_{y_1 , y_2} {\bf \tilde w} (\ep) -\alpha \pi (\ep ) + \frac{2\nu}{1-2\nu} \frac{1}{\ep^2} \frac{\p w_3 (\ep)}{\p y_3} .
\end{equation}

As a direct consequence of Proposition \ref{proofstrong}, we obtain the following convergences for the stresses
\begin{proposition}\label{stresscon} Let   $ \{ {\bf \tilde w}^0 ,  \pi_m \} $ be given   by (\ref{StrechJ01Tcor})-(\ref{StrechJ04Tcor}) and $\{ w_3^0 , \pi_w \} $ by (\ref{presstabD1cor})- (\ref{Bendtab0cor1}). Let $\displaystyle E^*_{cor}$ be defined by (\ref{Comptab0cor}).
Then we have
\begin{gather}
 D(\ep)   \to - 2 E^*_{cor} 
 \ \mbox{ in } \quad H^1 (0,T; L^2 (\Omega)) \quad \mbox{as } \; \ep \to 0,
\label{D01}  \\
\frac{\sigma_{jj} (\ep )  }{\ep} -2 e_{jj} (\mathbf{w}^0 ) +2y_3 \frac{\p^2 w^0_3 }{\p y_j^2} + 2 E^*_{cor}
 \to 0 \notag \\
 \quad \mbox{ in } \quad H^1 (0,T; L^2 (\Omega)) \quad \mbox{as } \; \ep \to 0, \; j=1,2,
\label{D02} \\ 
\hskip-12pt \frac{\sigma_{12} (\ep )  }{\ep} -2 e_{12} (\mathbf{w}^0 ) +2y_3 \frac{\p^2 w^0_3 }{\p y_1 \p y_2} \to 0 \, \mbox{ in } \, H^1 (0,T; L^2 (\Omega)) \, \mbox{ as } \ \ep \to 0,
\label{D03} \\
\frac{\sigma_{j3} (\ep )}{\ep}  \to 0 \, \mbox{ in } \, H^1 (0,T; L^2 (\Omega)) \, \mbox{ as } \ \ep \to 0, \; {  j=1,2,}
\label{D04} \\
\frac{\sigma_{33} (\ep )}{\ep} \to 0 \, \mbox{ in } \, H^1 (0,T; L^2 (\Omega)) \, \mbox{ as } \ \ep \to 0.
\label{D05}
\end{gather}
\end{proposition}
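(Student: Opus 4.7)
The strategy is to rewrite every rescaled stress component in terms of the corrected unknowns $\xi(\ep)$, $\kappa(\ep)$ plus explicit deterministic terms, and then invoke the strong convergences (\ref{St1})--(\ref{St60}) from Proposition \ref{proofstrong} together with the identity (\ref{Comptab0cor}). The algebra is straightforward once one uses the right recombination, so the proof is essentially bookkeeping; the only non-trivial analytic point is handling the truncation $\Psi_\ep$ appearing in the corrector $\mathcal{E}$.

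I would start with (\ref{D01}). From the definition (\ref{Diag}) and the identity $\frac{1}{\ep^2}\partial_{y_3}w_3(\ep) = \frac{1}{\ep^2}\partial_{y_3}\xi_3(\ep) + \Psi_\ep E^*_{cor}$ (which comes from (\ref{Corrtab0S})), one has
\begin{equation*}
D(\ep) + 2 E^*_{cor} = \tfrac{2\nu}{1-2\nu}\bigl(\mbox{div}_{y_1,y_2}\tilde\xi(\ep) + \tfrac{1}{\ep^2}\partial_{y_3}\xi_3(\ep)\bigr) - \alpha\kappa(\ep) + R_\ep,
\end{equation*}
where the deterministic remainder $R_\ep$ gathers the terms $\tfrac{2\nu}{1-2\nu}(\mbox{div}_{y_1,y_2}\tilde{\mathbf{w}}^* + \Psi_\ep E^*_{cor}) - \alpha\pi^0 + 2E^*_{cor}$. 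A direct computation using $E^*_{cor} = [\alpha(1-2\nu)\pi^0 - 2\nu\,\mbox{div}_{y_1,y_2}\tilde{\mathbf{w}}^*]/[2(1-\nu)]$ from (\ref{Comptab0cor}) shows that the $\Psi_\ep\equiv 1$ value of $R_\ep$ vanishes identically; the actual $R_\ep$ equals $\tfrac{2\nu}{1-2\nu}(\Psi_\ep - 1)E^*_{cor}$, which tends to $0$ strongly in $H^1(0,T;L^2(\Omega))$ by dominated convergence since $\Psi_\ep\to 1$ a.e. and $E^*_{cor}\in H^1(0,T;L^2(\Omega))$. Combined with (\ref{St1}), (\ref{St4}) and (\ref{St5}) this gives (\ref{D01}).

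The remaining convergences then follow by decomposition. For $j=1,2$ I write $2\partial_{y_j} w_j(\ep) = 2 e_{jj}(\mathbf{w}^0) - 2 y_3\partial_{y_j}^2 w_3^0 + 2 e_{jj}(\xi(\ep))$, and adding $D(\ep)$ yields (\ref{D02}) by (\ref{St1}) and (\ref{D01}). Similarly $2 e_{12}(\mathbf{w}(\ep)) = 2 e_{12}(\mathbf{w}^0) - 2 y_3\partial_{y_1}\partial_{y_2} w_3^0 + 2 e_{12}(\xi(\ep))$ delivers (\ref{D03}). For the shear stresses, $\frac{1}{\ep}e_{j3}(\mathbf{w}(\ep)) = \frac{1}{\ep}e_{j3}(\xi(\ep)) + \frac{\ep}{2}\partial_{y_j}\mathcal{E}$; the second term tends to $0$ strongly in $H^1(0,T;L^2(\Omega))$ because $\|\nabla_{y_1,y_2}\Psi_\ep\|_{L^2(\omega)} = O(\ep^{-1/2})$ (so $\ep\,\partial_{y_j}\Psi_\ep\to 0$ in $L^2$) while $\Psi_\ep\nabla_{y_1,y_2}E^*_{cor}$ is bounded, giving (\ref{D04}) via (\ref{St3}). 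Finally (\ref{D05}) comes from $\frac{2}{\ep^2}\partial_{y_3}w_3(\ep) + D(\ep) = \frac{2}{\ep^2}\partial_{y_3}\xi_3(\ep) + 2\Psi_\ep E^*_{cor} + D(\ep)$, which converges to $0 + 2 E^*_{cor} - 2 E^*_{cor} = 0$ strongly.

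The only delicate step is the treatment of the boundary-layer truncation $\Psi_\ep$: I must confirm that (i) $(\Psi_\ep - 1)E^*_{cor}\to 0$ and $\Psi_\ep E^*_{cor}\to E^*_{cor}$ strongly in $H^1(0,T;L^2(\Omega))$, and (ii) $\ep\,\partial_{y_j}\mathcal{E}\to 0$ strongly in the same norm despite the blow-up of $\|\nabla\Psi_\ep\|_{L^2}$. Both points are harmless but require the given scaling $\|\nabla\Psi_\ep\|_{L^q}=C\ep^{1/q-1}$ at $q=2$; this is the only place where the specific construction of the cut-off enters, and it is precisely what the choice of corrector in (\ref{Corrtab0S}) was designed to accommodate. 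Everything else is algebra plus Proposition \ref{proofstrong}.
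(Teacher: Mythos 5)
Your proposal is correct and is exactly the argument the paper intends: the paper offers no details beyond asserting that the proposition is ``a direct consequence of Proposition \ref{proofstrong}'', and your decomposition of each entry of $\sigma(\ep)/\ep$ into corrected unknowns $\xi(\ep),\kappa(\ep)$ plus the explicit limit terms, with the cancellation of $R_\ep$ via (\ref{Comptab0cor}) and the handling of $\Psi_\ep$ already built into the corrector of (\ref{Corrtab0S}), supplies precisely the missing bookkeeping.
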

\section{Appendix: A classical Kirchhoff type plate equations derivation}\label{append}

We follow \cite{fung} and, together with Kirchhoff's hypothesis, suppose that
\begin{enumerate}
  \item The vertical deflection of the plate takes form $u_3=w=w(x_1 , x_2 ,t)$.
  \item For small deflections $w$ and rotations $(\vartheta_1 , \vartheta_2) $ we have
  $\displaystyle  \vartheta_1 =\frac{\p w}{\p x_2}$ and  $\displaystyle  \vartheta_2 =-\frac{\p w}{\p x_1} .$ Then, Kirchhoff's hypothesis implies that
  the displacements $\mathbf{u}$ satisfy
   \begin{equation}\label{HypoKirchh} u_1 = u^\omega_1 (x_1 , x_2 ,t) - x_3 \frac{\partial w}{\partial x_1}; \, u_2 = u^\omega_2 (x_1 , x_2 ,t) - x_3 \frac{\partial w}{\partial x_2} ,\end{equation}
  where $(u^{\o}_1,u^{\o}_2) $ are the tangential displacements of points lying on the midsurface.
   Now a direct calculation gives  $e_{j3} (\mathbf{u}) =0, \ j=1,2$. Since
  $\displaystyle \sigma = 2G e({\bf u} ) + ( \frac{2\nu G}{1-2\nu} \mbox{ div }{\bf u}- \alpha p ) I,$
  we conclude that also $\sigma_{j3} =0 , \ j=1,2$.
  \item For the pressure field, we impose the normal velocities $U^{\ell }$ at the top and bottom surfaces and suppose (\ref{pressureapp}).
 Finally,  $\sigma_{33}$ is also supposed small throughout the plate. {  The later }  assumption gives
  \begin{equation}\label{Sigma33}
    e_{33} (\mathbf{u}) = - \frac{\nu}{1+\nu} \frac{1}{2G} (\sigma_{11} + \sigma_{22}) + \frac{\alpha (1-2\nu)}{2G (1+\nu)} p.
  \end{equation}
  \item For the other components of the strain tensor we have
  \begin{gather*}
  e_{11} ({\bf u} ) = \frac{\p u^\omega_1}{\p x_1}  - x_3 \frac{\partial^2 w}{\partial x_1^2}; \;
  e_{12} ({\bf u} ) = \frac{1}{2} (\frac{\p u^\omega_1}{\p x_2} + \frac{\p u^\omega_2}{\p x_1})  - x_3 \frac{\partial^2 w}{\partial x_1 \p x_2} ; \\
   e_{22} ({\bf u} ) = \frac{\p u^\omega_2}{\p x_2}  - x_3 \frac{\partial^2 w}{\partial x_2^2}; \;
   {  e_{33} ({\bf u} )= - \frac{\nu}{1-\nu} ( e_{11} ({\bf u} ) +  e_{22} ({\bf u} )) + \frac{\alpha (1-2\nu)}{2G (1-\nu)} p ;} \\
    {  \frac{2\nu G}{1-2\nu} \mbox{ div }{\bf u}- \alpha p  = \frac{2G\nu}{1-\nu} (e_{11} ({\bf u} ) + e_{22} ({\bf u} ))  -\frac{\alpha (1-2\nu)}{1-\nu} p }\label{trace}
\end{gather*}
\item For the other components of the stress tensor we have
\begin{gather}
    \sigma_{11} = \frac{2G}{1-\nu} (e_{11} ({\bf u} ) +\nu e_{22} ({\bf u} )) -\frac{\alpha (1-2\nu)}{1-\nu} p\label{sig11} \\
    \sigma_{22} = \frac{2G}{1-\nu} (e_{22} ({\bf u} ) +\nu e_{11} ({\bf u} )) -\frac{\alpha  (1-2\nu)}{1-\nu} p \label{sig22} \\
    \sigma_{12} = 2G e_{12} ({\bf u} ) = G (\frac{\p u^\omega_1}{\p x_2} + \frac{\p u^\omega_2}{\p x_1})  - 2G x_3 \frac{\partial^2 w}{\partial x_1 \p x_2} .\label{sig12}
\end{gather}\end{enumerate}
Next we define the {\bf stress resultants} (forces per unit length) $N_1 , N_2, N_{12}$ by
\begin{equation}\label{Resultants}
    N_1 =  \int^{\ell/2}_{-\ell/2} \sigma_{11} \ dx_3 , \ N_2 =  \int^{\ell/2}_{-\ell/2} \sigma_{22} \ dx_3 , \ N_{12} = \int^{\ell/2}_{-\ell/2} \sigma_{12} \ dx_3 ,
\end{equation}
{  the {\bf effective stress resultant due to the variation in pore pressure across the plate thickness} $N$ by}
\begin{equation}\label{Resp}
  {   N=- \int^{\ell/2}_{-\ell/2} p \ dx_3}
\end{equation}
and the {\bf external loading tangential to the plate}
\begin{equation}\label{load}
    f_i =  \sigma_{i3} |_{x_3 =\ell/2} - \sigma_{i3} |_{x_3 =-\ell/2} = \mathcal{P}^\ell_i +\mathcal{P}^{-\ell}_i , \; i=1,2.
\end{equation}
Next we average the equation (\ref{Coweq1a}) over the thickness and obtain
\begin{gather}
    \frac{\p N_1}{\p x_1} + \frac{\p N_{12}}{\p x_2} + f_1 =0 , \label{loads1} \\
    \frac{\p N_{12}}{\p x_1} +\frac{\p N_2}{\p x_2} + f_2 =0.\label{loads2}
\end{gather}
Inserting (\ref{sig11})-(\ref{sig12})
into the formulas (\ref{Resultants}) gives
\begin{gather}
   N_1  = \frac{2G\ell}{1-\nu} ( \frac{\p u^\omega_1}{\p x_1} + \nu \frac{\p u^\omega_2}{\p x_2} ) +\frac{\alpha (1-2\nu)  }{1-\nu} N ; \label{ResulCal1} \\
   N_2  = \frac{2G\ell }{1-\nu} ( \frac{\p u^\omega_2}{\p x_2} + \nu \frac{\p u^\omega_1}{\p x_1} ) +\frac{\alpha (1-2\nu) }{1-\nu} N ;\label{ResulCal2} \\
   N_{12}  = G\ell  (\frac{\p u^\omega_1}{\p x_2} + \frac{\p u^\omega_2}{\p x_1}).\label{ResulCal12}
\end{gather}
A substitution of  equations (\ref{ResulCal1})-(\ref{ResulCal12}) into the (\ref{loads1})-(\ref{loads2}) yields the equations for stretching of a plate of uniform thickness (\ref{Strech1}).

We need one more equation to complete the system (\ref{Strech1}). We have
\begin{gather}
{  \mbox{ div }{\bf u}  = \frac{1-2\nu}{1-\nu} \big(  \mbox{ div}_{x_1 , x_2} (u^\o_1 , u^\o_2 )  - x_3 \Delta_{x_1 , x_2} {  w}\big) + \frac{\alpha  (1-2\nu)}{2G (1-\nu)} p ,} \label{Stret0} \\
 \int^{\ell/2}_{-\ell/2}\mbox{ div }{\bf u} \ dx_3 = \frac{\ell (1-2\nu)}{1-\nu} \mbox{ div}_{x_1 , x_2} (u^\o_1 , u^\o_2 ) - \frac{\alpha  (1-2\nu)}{2G (1-\nu)} N . \label{Stretchaux}
\end{gather}
{
Next we average   equation (\ref{Coweq2}) over  thickness, use the assumption  (\ref{pressureapp}) and expression (\ref{Stretchaux}) and get (\ref{Strech2A}). Inserting (\ref{Stret0}) in equation (\ref{Coweq2}) and using hypothesis (\ref{pressureapp}) and equation (\ref{Strech2A}), yields equation (\ref{Stretch3}).}
\vskip5pt
It remains to find the equation for transverse deflection and for the bending moment due to the variation in pore pressure across the plate thickness.
\vskip1pt
  {  The  bending moment $M$ due to the variation in pore pressure across the plate thickness} is given by
\begin{equation}\label{pressurebendingmom}
    M=-  \int^{\ell/2}_{-\ell/2} x_3 p \ dx_3 .
\end{equation}
For the stress moments of the plate, which have as physical dimension the moment per unit length, we have:
\begin{enumerate} \item The {\bf twisting moment} $M_{12} :$
  \begin{equation}\label{M12}
     M_{12} = \int^{\ell/2}_{-\ell/2} \sigma_{12} \ x_3 dx_3 = - \frac{G\ell^3}{6} \frac{\p^2 w}{\p x_1 \p x_2}.
  \end{equation}
  \item The {\bf bending moments} $M_1$ and $M_2$:
  \begin{gather}
  M_1 = \int^{\ell/2}_{-\ell/2} \sigma_{11} \ x_3 dx_3 = - \frac{G\ell^3}{6(1-\nu )} (\frac{\p^2 w}{\p x^2_1 } +\nu \frac{\p^2 w}{\p x^2_2 } )  + \alpha \frac{1-2\nu }{1-\nu} M
  \label{M1} \\
  M_2 = \int^{\ell/2}_{-\ell/2} \sigma_{22} \ x_3 dx_3 = - \frac{G\ell^3}{6(1-\nu )} (\frac{\p^2 w}{\p x^2_2 } +\nu \frac{\p^2 w}{\p x^2_1 } )  + \alpha \frac{1-2\nu }{1-\nu} M.
  \label{M2}
  \end{gather}
  \item The {\bf transverse shear}  $(Q_1 , Q_2)$:
    $Q_i =\int^{\ell/2}_{-\ell/2} \sigma_{i3} \ d x_3 , \ i=1,2.$
\item {\bf Resultant external moment} $ (m_1 , m_2 ):$  $m_i =\frac{\ell}{2} (\mathcal{P}_i^\ell + \mathcal{P}_i^{-\ell} ).$ \end{enumerate}
Next we multiply the equation (\ref{Coweq1a}) by $x_3$ and integrate over the thickness to obtain
\begin{gather}
    \frac{\p M_1}{\p x_1} + \frac{\p M_{12}}{\p x_2} -Q_1 + m_1 =0 , \label{momes1} \\
    \frac{\p M_{12}}{\p x_1} +\frac{\p M_2}{\p x_2} -Q_2 + m_2 =0.\label{momes2}
\end{gather}
Averaging the the third equation in  (\ref{Coweq1a}) over the thickness yields:
\begin{equation}\label{momes3}
   \frac{\p Q_1}{\p x_1} + \frac{\p Q_{2}}{\p x_2}+ \mathcal{P}_3^\ell + \mathcal{P}_3^{-\ell}  =0
\end{equation}
Following Fung's textbook, we eliminate $Q_i$ from (\ref{momes1})-(\ref{momes3}) and obtain the equation of equilibrium in moments:\
\begin{equation}\label{eqmome}
    \frac{\p^2 M_1}{\p x_1^2} + 2\frac{\p^2 M_{12}}{\p x_1 \p x_2} + \frac{\p^2 M_2}{\p x_2^2} + \frac{\p m_1}{\p x_1} + \frac{\p m_{2}}{\p x_2}+ \mathcal{P}_3^\ell + \mathcal{P}_3^{-\ell}  =0.
\end{equation}
After inserting the formulas (\ref{M12})-(\ref{M2}) into (\ref{eqmome}), we obtain the poroelastic plate bending equation (\ref{bendingplate}).

\end{document}